\newcommand{\ee}{\varepsilon}
\newcommand{\R}{{\mathbb R}}
\newcommand{\Z}{{\mathbb Z}}
\newcommand{\cH}{{\mathcal H}}
\newtheorem{thm}{Theorem}[section]
\newtheorem{lem}[thm]{Lemma}
\newtheorem{cor}[thm]{Corollary}
\newtheorem{prop}[thm]{Proposition}
\newtheorem{prob}[thm]{Problem}
\newcommand{\inte}{{\mathrm{int}}\,}
\newcommand{\conv}{{\mathrm{conv}}\,}
\begin{document}
\hfill\today
\bigskip

\title{Reverse and dual Loomis-Whitney-type inequalities}
\author[Stefano Campi, Richard J. Gardner, and Paolo Gronchi]
{Stefano Campi, Richard J. Gardner, and Paolo Gronchi}
\address{Dipartimento di Ingegneria dell'Informazione e di Scienze
Matematiche,
Universit\`a degli Studi di Siena, Via Roma 56, 53100 Siena, Italy}
\email{campi@dii.unisi.it}
\address{Department of Mathematics, Western Washington University,
Bellingham, WA 98225-9063} \email{Richard.Gardner@wwu.edu}
\address{Dipartimento di Matematica "U. Dini", Universit\`a degli Studi di Firenze, Piazza Ghiberti 27, 50122
Firenze, Italy} \email{paolo@fi.iac.cnr.it}
\thanks{Second author supported in
part by U.S.~National Science Foundation Grant DMS-1103612.}
\subjclass[2010]{Primary: 52A20, 52A40; secondary: 52A38} \keywords{convex body, zonoid, intrinsic volume, Loomis-Whitney inequality, Meyer's inequality, Betke-McMullen conjecture, Cauchy-Binet theorem, geometric tomography}

\maketitle

\begin{abstract}
Various results are proved giving lower bounds for the $m$th intrinsic volume $V_m(K)$, $m=1,\dots,n-1$, of a compact convex set $K$ in $\R^n$, in terms of the $m$th intrinsic volumes of its projections on the coordinate hyperplanes (or its intersections with the coordinate hyperplanes).  The bounds are sharp when $m=1$ and $m=n-1$. These are reverse (or dual, respectively) forms of the Loomis-Whitney inequality and versions of it that apply to intrinsic volumes.   For the intrinsic volume $V_1(K)$, which corresponds to mean width, the inequality obtained confirms a conjecture of Betke and McMullen made in 1983.
\end{abstract}

\section{Introduction}
The Loomis-Whitney inequality states that for a Borel set $A$ in $\R^n$,
\begin{equation}\label{LWhit}
{\cH}^n(A)^{n-1}\le \prod_{i=1}^n {\cH}^{n-1}\left(A|e_i^{\perp}\right),
\end{equation}
where $A|e_i^{\perp}$ denotes the orthogonal projection of $A$ on the $i$th coordinate hyperplane $e_i^{\perp}$, and where equality holds when $A$ is a coordinate box.  (See Section~\ref{prelim} for unexplained notation and terminology.)  First proved by Loomis and Whitney \cite{LooW49} in 1949, it is one of the fundamental inequalities in mathematics, included in many texts; see, for example, \cite[Theorem~11.3.1]{BurZ88}, \cite[p.~383]{Gar06}, \cite[Corollary~5.7.2]{Garling07}, \cite[Section~4.4.2]{Had57}, and \cite[Lemma~12.1.4]{Pfe93}.  Since the present article is to some extent a sequel to \cite{CamG11}, we refer to that paper for numerous references to geometrical, discrete, and analytical versions and generalizations of (\ref{LWhit}) and applications to Sobolev inequalities and embedding, stereology, geochemistry, data processing, and compressed sensing.  In addition one may mention Balister and Bollob\'{a}s \cite{BalB12} and Gyarmati, Matolcsi, and Ruzsa \cite{GMR10}, where the Loomis-Whitney inequality finds use in combinatorics and the theory of sum sets, the former paper also citing Han \cite{Han78}, who proved an analogue of the Loomis-Whitney inequality for the entropy of a finite set of random variables; the observation of Bennett, Carbery, and Tao \cite{BCT06} that the multilinear Kakeya conjecture may be viewed as a generalization of the Loomis-Whitney inequality; and applications to group theory by Gromov \cite{Gro08}, graph theory by Madras, Sumners, and Whittington \cite{MSW09}, and data complexity by Ngo, Porat, R\'{e}, and Rudra \cite{NPRR}.

The focus here is on reverse forms of the Loomis-Whitney inequality for compact convex sets---where lower bounds instead of upper bounds are obtained in terms of projections on coordinate subspaces---or on dual forms, where lower bounds are given in terms of intersections with coordinate subspaces.  An example of the latter type is an inequality due to Meyer \cite{Mey88}, which states that if $K$ is a convex body in $\R^n$, then
\begin{equation}\label{Meyer inequality}
V(K)^{n-1}\geq \frac{(n-1)!}{n^{n-1}}\prod_{i=1}^nV_{n-1}(K\cap e_i^\perp),
\end{equation}
with equality if and only if $K$ is a coordinate cross-polytope.  Here $V$ denotes volume, so that whereas (\ref{LWhit}) provides, in particular, an upper bound for the volume of a convex body in terms of volumes of its projections on coordinate hyperplanes, Meyer's inequality gives a lower bound in terms of the volumes of its sections by coordinate hyperplanes.

In \cite{CamG11}, variants of the Loomis-Whitney inequality were found in which an upper bound for $V(K)$ is replaced by an upper bound for the intrinsic volume $V_m(K)$, for some $m\in \{1,\dots,n-1\}$. Our interest is in doing the same for lower bounds.  Intrinsic volumes include as special cases surface area and mean width, corresponding to the cases $m=n-1$ and $m=1$, respectively, up to constant factors depending only on the dimension $n$.

Upper and lower bounds of this sort were first obtained in the pioneering study of Betke and McMullen \cite{BetM83}.  Their motivation was somewhat different, but, as was noted in \cite{CamG11}, it is a consequence of their results that
\begin{equation}\label{BM33}
V_{n-1}(K)\le \sum_{i=1}^nV_{n-1}\left(K|e_i^{\perp}\right),
\end{equation}
with equality if and only if $K$ is a (possibly lower-dimensional) coordinate box.  (Here and for the remainder of the introduction, $K$ is always a compact convex set in $\R^n$.)  Similarly, we observe that it follows from \cite[Theorem~2]{BetM83} that
\begin{equation}\label{SQUARE33}
V_{n-1}(K)^2 \geq \sum_{i=1}^nV_{n-1}(K|e_i^\perp)^2,
\end{equation}
with equality if and only if either $\dim K\le n-1$ or $\dim K=n$ and $K$ is a coordinate cross-polytope.  Since each section is contained in the corresponding projection, the same inequality holds with $V_{n-1}(K|e_i^\perp)$ replaced by $V_{n-1}(K\cap e_i^\perp)$, though the equality condition is then slightly different.  (See Theorem~\ref{BetkeMcMullenSquare};
note that (\ref{Meyer inequality}) is false when sections are replaced by projections, since the left-hand side can then be zero when the right-hand side is positive.)  For this reason we concentrate on lower bounds involving projections for the rest of the introduction.

Campi and Gronchi \cite{CamG11} conjectured a generalization of (\ref{BM33}), namely,
\begin{equation}\label{CG33}
V_{m}(K)\le \frac{1}{n-m}\sum_{i=1}^nV_{m}\left(K|e_i^{\perp}\right),
\end{equation}
where $m=1,\dots,n-1$, with equality if and only if $K$ is a (possibly lower-dimensional) coordinate box.  They proved (\ref{CG33}) when $m=1$ and when $K$ is a zonoid, but for $m\in \{2,\dots,n-2\}$ and general $K$, it remains an open problem.  (Though it is not mentioned in \cite{CamG11}, inequality (\ref{CG33}) for $m=1$ confirms a conjecture of Betke and McMullen, namely, the case $r=1$ and $s=d-1$ of \cite[Conjecture~3(a), p.~537]{BetM83}.) This naturally suggests a corresponding generalization of (\ref{SQUARE33}):
\begin{equation}\label{zonoid33}
V_{m}(K)^2 \geq \frac{1}{n-m}\sum_{i=1}^nV_{m}(K|e_i^\perp)^2.
\end{equation}
In Theorem~\ref{Zonoidm}, we show that (\ref{zonoid33}) holds when $K$ is a zonoid. By a generalized Pythagorean theorem proved in Proposition~\ref{CBm},
equality holds in (\ref{zonoid33}) when $K$ is contained in an $m$-dimensional plane. Exact equality conditions for (\ref{zonoid33}) are complicated to interpret, but we provide a clear geometric description when $m=1$.

It turns out that if $m<n-1$, (\ref{zonoid33}) is false for general $K$, as we prove at the end of Section~\ref{zonoids} for $n=3$ and $m=1$.

A lower bound for the mean width of $K$ in terms of the mean widths of its projections on coordinate hyperplanes was also conjectured by Betke and McMullen in 1983 (the case $r=1$ and $s=d-1$ of \cite[Conjecture~3(b), p.~537]{BetM83}). We show in Theorem~\ref{BMproof} that their conjecture is true by proving the existence of a constant $c_0=c_0(n)$, $n\geq
2$, such that
$$
V_1(K)\ge c_0\sum_{i=1}^n V_1(K|e_i^\perp),
$$
with equality if and only if $K$ is either a singleton or a regular coordinate cross-polytope.

Sharp reverse inequalities of the isoperimetric type are relatively rare
and hard to prove.  Examples can be found in \cite[Remark~9.2.10(ii)]{Gar06} and in \cite{LYZ1}, \cite{LYZ2}, \cite{SchW12}, and the references given in these papers.

Occasionally we take a more general viewpoint, considering estimates of
the $j$th intrinsic volume $V_j(K)$ of $K$ in terms of the $m$th intrinsic volumes of its projections on or intersections with coordinate hyperplanes.  However, for the most part, the difficulty of finding sharp bounds forces us back to the case when $j=m$.

The paper is organized as follows.  After the preliminary Section~\ref{prelim}, lower bounds for $V_{n-1}(K)$ are presented in Section~\ref{n-1}.  The main argument and the case $n=3$ of the Betke-McMullen conjecture is proved in Section~\ref{BMconje}; the long and somewhat technical case $n\ge 4$ is deferred to an appendix in order to maintain the flow of the paper.  Results for zonoids are the topic of Section~\ref{zonoids}.  In Section~\ref{less}, we gather several supplementary results, some involving upper bounds as well as lower bounds.  The final Section~\ref{problems} lists some problems for future research.

\section{Preliminaries}\label{prelim}

\subsection{General notation and basic facts}\label{subsec:notations}
As usual, $S^{n-1}$ denotes the unit sphere and $o$ the origin in Euclidean
$n$-space $\R^n$.  We assume throughout that $n\ge 2$.  The Euclidean norm of $x\in \R^n$ is denoted by $|x|$. If $x,y\in\R^n$, then $x\cdot y$ is the inner product of $x$ and $y$ and $[x,y]$ is the line segment with endpoints $x$ and $y$.  The unit ball in $\R^n$ is $B^n=\{x\in \R^n:|x|\le 1\}$. We write $e_1,\dots,e_n$ for the standard orthonormal basis for $\R^n$. We will write $\inte A$ and $\conv A$ for the interior and convex hull, respectively, of a set $A\subset \R^n$.  The {\em dimension} $\dim A$ of $A$ is the dimension of the affine hull of $A$. The indicator function of $A$ will be denoted by $1_A$.  The (orthogonal) projection of $A$ on a plane $H$ is denoted by $A|H$. If $u\in S^{n-1}$, then $u^\perp$ is the $(n-1)$-dimensional subspace orthogonal to $u$.

A set is {\it $o$-symmetric} if it is centrally symmetric, with center at the
origin, and {\em $1$-unconditional} if it is symmetric with respect to the coordinate hyperplanes.

We write ${\mathcal{H}}^k$ for $k$-dimensional Hausdorff measure in $\R^n$,
where $k\in\{1,\dots, n\}$. The notation $dz$ will always
mean $d{\mathcal{H}}^k(z)$ for the appropriate $k=1,\dots, n$.

We now collect some basic material concerning compact convex sets.  Standard references are the books \cite{Gar06}, \cite{Gru07}, and \cite{Sch93}.

Let $K$ be a compact convex set in $\R^n$.  Then $V(K)$ denotes its {\em volume}, that is, $\cH^k(K)$, where $\dim K=k$.  We write $\kappa_n=V(B^n)=\pi^{n/2}/\Gamma(n/2+1)$ for the volume of the unit ball $B^n$.

A {\em convex body} is a compact convex set with a nonempty interior.

A {\em coordinate box} is a (possibly degenerate) rectangular parallelepiped whose facets are parallel to the coordinate hyperplanes.  A {\em cross-polytope} in $\R^n$ is the convex hull of $k$ mutually orthogonal line segments with a point in common, for some $k\in \{1,\dots,n\}$; it is a {\em coordinate cross-polytope} if these line segments are parallel to the coordinate axes.  The adjective {\em regular} for a polytope is used in the traditional sense, so that in particular, a regular cross-polytope in $\R^n$ has dimension $n$.  We shall write $C^n=\conv\{\pm e_1,\dots,\pm e_n\}$ for the standard regular $o$-symmetric coordinate cross polytope in $\R^n$ and $Q^n=\prod_{i=1}^n[-e_i,e_i]$ for the $o$-symmetric coordinate cube in $\R^n$ with side length 2.

If $m\in \{1,\dots,n-1\}$, the {\em $m$th area measure} of $K$ is denoted by $S_m(K,\cdot)$.  When $m=n-1$, $S_{n-1}(K,\cdot)=S(K,\cdot)$ is the {\em surface area measure} of $K$.  The quantity $S(K)=S(K,S^{n-1})$ is the {\em surface area} of $K$.

If $K$ is a nonempty compact convex set in $\R^n$, then
$$
h_K(x)=\sup\{x\cdot y: y\in K\},
$$
for $x\in\R^n$, defines the {\it support function} $h_K$ of $K$.  Since it is positively homogeneous of degree 1, we shall sometimes regard $h_K$ as a function on $S^{n-1}$.

We collect a few facts and formulas concerning mixed and intrinsic volumes.  A {\em mixed volume} $V(K_{i_1},\dots,K_{i_n})$ is a coefficient in the expansion of $V(t_1K_1+\cdots+t_nK_n)$ as a homogeneous polynomial of degree $n$ in the parameters $t_1,\dots,t_n\ge 0$, where $K_1,\dots,K_n$ are compact convex sets in $\R^n$. The notation $V(K,i;L,n-i)$, for example, means that there are $i$ copies of $K$ and $n-i$ copies of $L$. The quantity
\begin{equation}\label{Viform}
V_i(K)=\frac{1}{c_{n,i}}V\left(K,i;B^n,n-i\right),
\end{equation}
where $i\in\{0,\dots,n\}$ and $c_{n,i}=\kappa_{n-i}/\binom{n}{i}$, is called an {\em intrinsic volume} since it is independent of the ambient space containing the compact convex set $K$.  Then $V_n(K)=V(K)$, $V_{n-1}(K)=S(K)/2$, and
\begin{equation}\label{V1form1}
V_1(K)=\frac{n\kappa_n}{2\kappa_{n-1}}({\text{mean width of~}}K).
\end{equation}
See \cite[Sections~A.3~and~A.6]{Gar06}.  From (\ref{V1form1}), we obtain
\begin{equation}\label{V1form2}
V_1(K)=\frac{1}{\kappa_{n-1}}\int_{S^{n-1}}h_K(u)\,du.
\end{equation}

For the reader's convenience, we state the following form of Minkowski's integral inequality \cite[(6.13.8), p.~148]{HLP}.  If $\mu$ is a measure on a set $X$ and $f_i$, $i=1,\dots,k$, are $\mu$-measurable functions on $X$, then for $p\ge 1$,
\begin{equation}\label{Minkint}
\int_X\left(\sum_{i=1}^kf_i(x)^p\right)^{1/p}\,d\mu(x)\ge
\left(\sum_{i=1}^k\left(\int_Xf_i(x)\,d\mu(x)\right)^p\right)^{1/p},
\end{equation}
with equality if and only if the functions $f_i$ are {\em essentially proportional}.  The latter term means that $f_i(x)=b_ig(x)$ for $\mu$-almost all $x$, all $i=1,\dots,k$, and some $\mu$-measurable function $g$ on $X$ and constants $b_i$, $i=1,\dots,k$.

\subsection{Projections and sections}
Let $K$ be a compact convex set in $\R^n$ and let $m\in\{1,\dots,n-1\}$.  Define ${\mathcal{P}}(K,m)$ to be the class of compact convex sets $L$ in $\R^n$ such that
$$
  V_{m}(L|e_i^{\perp})=V_{m}(K|e_i^{\perp}),
$$
for $i=1,\dots,n$.  Similarly, we let ${\mathcal{S}}(K,m)$ be the class of compact convex sets $L$ in $\R^n$ such that
$$
  V_{m}(L\cap e_i^{\perp})=V_{m}(K\cap e_i^{\perp}),
$$
for $i=1,\dots,n$.

If $j, m\in\{1,\dots,n\}$ and $K$ is any compact convex set in $\R^n$, the $j$th intrinsic volume of bodies in ${\mathcal{S}}(K,m)$ is unbounded, so there are no bodies in ${\mathcal{S}}(K,m)$ of maximal $j$th intrinsic volume.  To see this, for each $i=1,\dots,n$, let $D_i$ be a (possibly degenerate) $(n-1)$-dimensional ball in the part of $e_i^{\perp}$ belonging to the positive orthant, such that $o\not\in D_i$ and $V_{m}(D_i)=V_m(K\cap e_i^{\perp})$.  Then for any $x$ in the interior of the positive orthant, we have $L_x=\conv\{D_1\dots,D_n,x\}\in {\mathcal{S}}(K,m)$ and $V_j(L_x)\rightarrow\infty$ as $|x|\rightarrow\infty$, for each $j=1,\dots,n$.  This fact motivates us to focus on sets of minimal $j$th intrinsic volume in ${\mathcal{S}}(K,m)$.

\begin{lem}\label{newlemmin}
If $K$ is a compact convex set in $\R^n$ and $j, m\in\{1,\dots,n\}$, then there exists a minimizer of $V_j$ in ${\mathcal{S}}(K,m)$. If $j\ge m+2$, then the minimum of $V_j$ in ${\mathcal{S}}(K,m)$ is zero.
\end{lem}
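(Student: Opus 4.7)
My plan is to dispatch the two parts in reverse order. The explicit construction that yields minimum zero when $j\ge m+2$ simultaneously settles the existence claim in that range, leaving only small $j$ for a separate compactness argument.

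\emph{Construction for $j\ge m+2$.} Set $v_i=V_m(K\cap e_i^\perp)$. First I would fix a $(j-1)$-dimensional affine subspace $H\subset\R^n$ in general position with respect to the coordinate hyperplanes, arranged so that, for each $i$, the slice $H\cap e_i^\perp$ meets the open positive orthant of $e_i^\perp$; translating a generic $(j-1)$-dimensional linear subspace by $(R,\dots,R)$ with $R$ large accomplishes this. General position gives
$$
\dim(H\cap e_i^\perp)=(j-1)+(n-1)-n=j-2\ge m.
$$
Inside $H\cap e_i^\perp$, in the open positive orthant of $e_i^\perp$, I would choose an $m$-dimensional compact convex set $D_i$ (for instance an appropriately scaled $m$-dimensional ball) with $V_m(D_i)=v_i$ when $v_i>0$, and $D_i=\emptyset$ when $v_i=0$. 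The candidate minimizer is
$$
L=\conv(D_1\cup\cdots\cup D_n)\subset H,
$$
so $\dim L\le j-1$ and hence $V_j(L)=0$. For the constraints: an $x\in L$ with $x_i=0$ can be written as a convex combination $x=\sum_\ell s_\ell z_\ell$ with $z_\ell\in D_\ell$ and $\sum s_\ell=1$; since $(z_\ell)_i>0$ strictly for every $\ell\ne i$ while $(z_i)_i=0$, the identity $x_i=0$ forces $s_\ell=0$ for all $\ell\ne i$, so $x\in D_i$. Therefore $L\cap e_i^\perp=D_i$ and $V_m(L\cap e_i^\perp)=v_i$, placing $L$ in $\mathcal{S}(K,m)$ and showing the infimum is attained at zero.

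\emph{Existence when $j\le m+1$.} Since $K\in\mathcal{S}(K,m)$, the infimum $\alpha=\inf\{V_j(L):L\in\mathcal{S}(K,m)\}$ is finite. Taking a minimizing sequence $L_k$, my plan is to establish uniform boundedness of $\{L_k\}$, apply the Blaschke selection theorem to extract a Hausdorff-convergent subsequence $L_k\to L$, and use the continuity of intrinsic volumes to conclude $V_j(L)=\alpha$ and $V_m(L\cap e_i^\perp)=v_i$. The latter continuity holds away from tangential configurations, a condition one can secure by a small generic perturbation that leaves $V_j$ and the constraints essentially unchanged.

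\emph{Main obstacle.} The step I expect to be hardest is the uniform boundedness of $\{L_k\}$. Intuitively, the constraints themselves preclude arbitrary elongation: if $L_k$ stretched along some coordinate direction $e_i$, convexity would force a neighboring section $L_k\cap e_j^\perp$ (with $j\ne i$) to stretch in the same direction, inflating $V_m$ of that section beyond the prescribed $v_j$. When $m=1$ this reasoning is immediate because $V_1$ dominates the diameter, but for $m\ge 2$ the $m$th intrinsic volume does not control diameter directly, so a quantitative bound likely requires combining convexity with a Meyer- or Betke--McMullen-type lower estimate to pin down all coordinate extents of $L_k$ in terms of $v_1,\dots,v_n$. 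As a fallback, one may minimize $V_j$ over $\{L\in\mathcal{S}(K,m):L\subset R B^n\}$ for each large $R$ (where Blaschke immediately yields a minimizer $L_R$) and analyze the behavior as $R\to\infty$.
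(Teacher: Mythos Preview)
Your construction for $j\ge m+2$ is correct and follows the same pattern as the paper's: place small convex sets $D_i$ in the positive orthant of $e_i^\perp$ inside a low-dimensional flat, and take their convex hull. The paper uses an $(m+1)$-dimensional plane rather than a $(j-1)$-dimensional one, which is a minor simplification (a single construction works for all $j\ge m+2$ at once), but the idea is the same.

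The genuine gap is in the case $j\le m+1$, and you have correctly identified it: you do not have an argument for uniform boundedness of a minimizing sequence. Your stated intuition is also pointed at the wrong section. Stretching $L$ along $e_i$ does not, by itself, force any section $L\cap e_j^\perp$ with $j\ne i$ to grow; the section that matters is $L\cap e_i^\perp$ itself. The paper's key idea is this. Suppose $V_j(L)\le c$; then by the comparison inequality between intrinsic volumes there is a constant $d=d(c)$ with $V_{m+1}(L)\le d$. If some $x\in L$ has $|x_{i_0}|>w$, form the cone
\[
J=\conv\{x,\,L\cap e_{i_0}^\perp\}\subset L.
\]
Translative integral geometry gives the exact formula
\[
V_{m+1}(J)=\frac{|x_{i_0}|}{m+1}\,V_m(L\cap e_{i_0}^\perp)=\frac{|x_{i_0}|}{m+1}\,v_{i_0},
\]
so $V_{m+1}(L)\ge v_{i_0}\,w/(m+1)$. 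Taking $w>(m+1)d/\min_i v_i$ yields a contradiction, hence every $L$ with $V_j(L)\le c$ lies in the cube $[-w,w]^n$. This is the missing step; once you have it, Blaschke selection finishes the argument. Your fallback of minimizing over $L\subset RB^n$ and letting $R\to\infty$ does not avoid the issue, since you would still need to rule out $L_R$ escaping, which is exactly the boundedness you are trying to prove.

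A secondary point: your appeal to ``generic perturbation'' to ensure continuity of $L\mapsto V_m(L\cap e_i^\perp)$ under Hausdorff limits is not a proof; perturbing the minimizing sequence changes the constraints. The paper is also brief here, but once boundedness is in hand one argues that the sublevel set $\{L\in\mathcal{S}(K,m):V_j(L)\le c\}$ is closed, which is where this continuity enters.
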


\begin{proof}
The second statement in the lemma and the case $j\ge m+2$ of the first statement follow from the existence of an $(m+1)$-dimensional set in ${\mathcal{S}}(K,m)$.  To see this, choose an $(m+1)$-dimensional plane in $\R^n$ whose intersection $H$ with the positive orthant has dimension $m+1$ and satisfies $V_m(H\cap e_i^\perp)> V_m(K\cap e_i^\perp)$, for $i=1,\dots,n$. For each $i=1,\dots,n$, let $L_i\subset H\cap e_i^\perp$ be such that $V_m(L_i)=V_m(K\cap e_i^\perp)$ and $L_i\cap e_k^\perp = \emptyset$ if $k\neq i$. Then $\conv\{L_1,\dots,L_n\}$ is an $(m+1)$-dimensional set in ${\mathcal{S}}(K,m)$.

Suppose that $j\leq m+1$ and let $a=\min_{1\le i\le n}V_m(K\cap e_i^\perp)$. Let $c\ge 0$ and let
$${\mathcal{M}}={\mathcal{M}}(c)=\{L\in {\mathcal{S}}(K,m): V_j(L)\le c\}.$$
By the inequality \cite[(6.4.7), p.~334]{Sch93} between the intrinsic volumes $V_j$ and $V_k$ for $k\ge j$, there is a constant $d$ such that $V_k(L)\le d$ for each $L\in {\mathcal{M}}$ and $j\le k\le n$.  Let $w>(m+1)d/a$.  We claim that if $L\in {\mathcal{M}}$, then $L\subset [-w,w]^n$.  Indeed, if this is not true, then there is an $x=(x_1,\dots,x_n)\in L$ and $i_0\in \{1,\dots,n\}$ such that $|x_{i_0}|>w$.  If $J=\conv\{x,L\cap e_{i_0}^\perp\}$, then $J\subset L$. Since $J$ is a cone, the formula \cite[(4.5.35), p.~255]{Sch93} from translative integral geometry, with $E_k$ replaced by $e_{i_0}^\perp$, yields
$$V_{m+1}(J)=V_m(L\cap e_{i_0}^\perp)|x_{i_0}|/(m+1).$$
From these facts, we obtain
$$V_{m+1}(L)\ge V_{m+1}(J)> V_m(L\cap e_{i_0}^\perp)w/(m+1)\ge aw/(m+1) > d,$$
contradicting the definition of $d$.  This proves the claim.  As a consequence, the class ${\mathcal{M}}$ is compact in the Hausdorff metric and the existence of the minimizer follows.
\end{proof}

Since intrinsic volumes are monotonic (see, for example, \cite[(A.18), p.~399]{Gar06}) and $K\cap e_i^{\perp}\subset K$ for $i=1,\dots,n$, we have the trivial lower bound
\begin{equation}\label{trivmax}
V_m(K)\ge \max_{1\le i\le n}V_m(K\cap e_i^{\perp}),
\end{equation}
for $m=1,\dots,n-1$.  It is also true, but not trivial, that
\begin{equation}\label{trivmax2}
V_m(K)\ge \max_{1\le i\le n}V_m(K|e_i^{\perp}),
\end{equation}
for $m=1,\dots,n-1$.  This follows from the observation in \cite[p.~556]{CamG11} (where it is stated for $u=e_i$), that
\begin{equation}\label{cgtriv}
V_m(K)\ge V_m(K|u^{\perp}),
\end{equation}
for all $u\in S^{n-1}$.  Of course, (\ref{trivmax2}) is stronger than (\ref{trivmax}) because the obvious containment $K\cap e_i^{\perp}\subset K|e_i^{\perp}$ implies that
$$
V_m\left(K|e_i^{\perp}\right)\ge V_m\left(K\cap e_i^{\perp}\right),
$$
for $i, m=1,\dots,n-1$.  The easy bounds (\ref{trivmax}) and (\ref{trivmax2}) imply that for all $p>0$,
\begin{equation}\label{easynbound}
V_m(K)\ge \left(\frac1n\sum_{i=1}^nV_m(K|e_i^{\perp})^p\right)^{1/p}
\ge \left(\frac1n\sum_{i=1}^nV_m(K\cap e_i^{\perp})^p\right)^{1/p}.
\end{equation}

The so-called Pythagorean inequalities state that for a compact convex set $K$ in $\R^n$ and $m\in \{1,\dots,n-1\}$,
\begin{equation}\label{Pythagorean}
V_m(K|u^{\perp})^2\le \sum_{i=1}^nV_m(K|e_i^{\perp})^2,
\end{equation}
for all $u\in S^{n-1}$. These were proved by Firey \cite{Fir60} (see also \cite[(3), p.~153]{BurZ88} and \cite[Theorem~9.3.8 and Note~9.6]{Gar06}).

We are not aware of an explicit statement and proof of the following result in the literature.

\begin{prop}\label{CBm}
If $m\in \{1,\dots,n-1\}$ and $A$ is a Borel set contained in an $m$-dimensional plane in $\R^n$, then
$$
\cH^m(A)^2= \frac{1}{n-m}\sum_{i=1}^n\cH^m(A|e_i^{\perp})^2.
$$
\end{prop}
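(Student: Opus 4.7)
The plan is to reduce the statement to a linear-algebraic identity via the area formula and then to exploit the Cauchy-Binet formula twice. Since translation of $A$ in $\R^n$ changes projections only by translations in the hyperplanes $e_i^\perp$, I may assume that $A$ lies in an $m$-dimensional linear subspace $H$. Choose an orthonormal basis $u_1,\dots,u_m$ of $H$ and let $U$ be the $n\times m$ matrix with these columns, so that $U^TU=I_m$. For each $i\in\{1,\dots,n\}$ denote by $U_i$ the $(n-1)\times m$ matrix obtained from $U$ by deleting the $i$th row, and for an $m$-element subset $J\subset\{1,\dots,n\}$ write $U_J$ for the $m\times m$ submatrix of $U$ with rows indexed by $J$.

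The first step is to identify $e_i^\perp$ with $\R^{n-1}$ by dropping the $i$th coordinate. Parametrizing $A$ by $y\in\R^m$ via $x=Uy$, the projection $\pi_i\colon H\to e_i^\perp$ is linear with matrix $U_i$, so the area formula gives
\begin{equation*}
\cH^m(A|e_i^\perp)=\cH^m(A)\sqrt{\det(U_i^TU_i)}.
\end{equation*}
(If $\cH^m(A)=0$ the identity to be proved is trivial, so I may assume this factor is nonzero; the Jacobian factor is then the correct one even when some $U_i$ drops rank, since squaring absorbs signs.) Consequently the claim reduces to the purely algebraic statement
\begin{equation*}
\sum_{i=1}^n\det(U_i^TU_i)=n-m.
\end{equation*}

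To prove this, I apply the Cauchy-Binet formula to each $U_i^TU_i$:
\begin{equation*}
\det(U_i^TU_i)=\sum_{\substack{|J|=m\\ i\notin J}}(\det U_J)^2.
\end{equation*}
Summing over $i$, each $m$-subset $J\subset\{1,\dots,n\}$ is counted exactly once for every $i\notin J$, that is, $n-m$ times, yielding
\begin{equation*}
\sum_{i=1}^n\det(U_i^TU_i)=(n-m)\sum_{|J|=m}(\det U_J)^2.
\end{equation*}
A second application of Cauchy-Binet, this time to $U^TU$, gives $\sum_{|J|=m}(\det U_J)^2=\det(U^TU)=1$, which completes the identity and hence the proposition.

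No genuine obstacle is foreseen: the only delicate point is making sure the Jacobian formula is applied with an orthonormal parametrization of $H$ so that $\cH^m$ corresponds to Lebesgue measure on $\R^m$, and that the identification $e_i^\perp\cong\R^{n-1}$ used to compute the Jacobian is an isometry (dropping the $i$th coordinate is such an isometry because $e_i^\perp$ is spanned by the remaining standard basis vectors). Everything else is bookkeeping around the two uses of Cauchy-Binet.
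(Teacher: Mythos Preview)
Your proof is correct and follows essentially the same route as the paper: both arguments apply Cauchy--Binet twice (once for $A$ and once for each $A|e_i^\perp$) and use the identical counting observation that each $m$-element subset $J\subset\{1,\dots,n\}$ is picked up by exactly the $n-m$ indices $i\notin J$. The only difference is cosmetic: the paper phrases the two Cauchy--Binet steps as the geometric generalized Pythagorean identity $\cH^m(A)^2=\sum_S\cH^m(A|S)^2$ over $m$-dimensional coordinate subspaces, whereas you first pass to matrices via the area formula and carry out the same computation with the minors $\det U_J$.
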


\begin{proof}
It is a well-known consequence of the Cauchy-Binet theorem that the following generalized Pythagorean theorem holds (see, for example, \cite{ConB74}):
\begin{equation}\label{eqCauBin}
\cH^m(A)^2= \sum\left\{\cH^m(A|S)^2: {\text{$S$ is an $m$-dimensional coordinate subspace}}\right\}.
\end{equation}
Note that if $1\le i_1<i_2<\cdots<i_{n-m}\le n$ and $S$ is the $m$-dimensional subspace that is the orthogonal complement of the subspace spanned by $e_{i_1},e_{i_2},\dots,e_{i_{n-m}}$, then
$$A|S=\left(\cdots\left((A|e_{i_1}^{\perp})|e_{i_2}^{\perp}\right)|
\cdots\right)|e_{i_{n-m}}^{\perp}.$$
Here, the order of the successive projections of $A$ on the $e_{i_k}^{\perp}$'s can be changed arbitrarily.  Using this and (\ref{eqCauBin}) (twice, the second time with $A$ replaced by $A|e_i^{\perp}$), we obtain
\begin{eqnarray*}
\cH^m(A)^2&=&\sum_{1\le i_1<i_2<\cdots<i_{n-m}\le n}\cH^m\left(\left(\cdots\left((A|e_{i_1}^{\perp})|e_{i_2}^{\perp}\right)|
\cdots\right)|e_{i_{n-m}}^{\perp}\right)^2\\
&=&\frac{1}{n-m}\sum_{i=1}^{n}\sum_{1\le i_1<i_2<\cdots<i_{n-m-1}\le n,~ i\neq i_k}\cH^m\left(\left(\cdots\left((A|e_{i}^{\perp})
|e_{i_1}^{\perp}\right)|
\cdots\right)|e_{i_{n-m-1}}^{\perp}\right)^2\\
&= &\frac{1}{n-m}\sum_{i=1}^n\cH^m(A|e_i^{\perp})^2,
\end{eqnarray*}
since the double sum in the second equation counts each summand in the first sum $n-m$ times.
\end{proof}

\section{The case $m=n-1$}\label{n-1}

After the following lemma, we will apply Meyer's inequality (\ref{Meyer inequality}) to deal with the problem of minimum volume in $\mathcal{S}(K,n-1)$.

\begin{lem}\label{Existence}
If $s_1,\dots,s_n$ are positive real numbers, there is a unique $n$-dimensional $o$-symmetric coordinate cross-polytope $C$ in $\R^n$ such that
$$V_{n-1}(C\cap e_i^\perp)=s_i,$$
for $i=1,2,\dots,n$.
\end{lem}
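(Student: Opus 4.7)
The plan is to parametrize, reduce the statement to solving a multiplicative system in $n$ positive unknowns, and then solve it explicitly. Every $n$-dimensional $o$-symmetric coordinate cross-polytope in $\R^n$ has the form
$$
C=C(a_1,\dots,a_n)=\conv\{\pm a_1e_1,\dots,\pm a_ne_n\},
$$
with $a_i>0$, and this parametrization is bijective. The section $C\cap e_i^\perp$ is the $(n-1)$-dimensional $o$-symmetric coordinate cross-polytope $\conv\{\pm a_je_j:j\ne i\}$ in $e_i^\perp$. A direct computation (the standard volume formula for a cross-polytope in $\R^{n-1}$ with mutually orthogonal semi-axes of lengths $a_j$, $j\ne i$) gives
$$
V_{n-1}(C\cap e_i^\perp)=\frac{2^{n-1}}{(n-1)!}\prod_{j\ne i}a_j.
$$

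First I would record this formula and then translate the $n$ equations $V_{n-1}(C\cap e_i^\perp)=s_i$, $i=1,\dots,n$, into the system
$$
\prod_{j\ne i}a_j=\frac{(n-1)!}{2^{n-1}}\,s_i,\qquad i=1,\dots,n.
$$
Setting $P=\prod_{j=1}^na_j$, each equation becomes $P/a_i=(n-1)!\,s_i/2^{n-1}$, i.e.\ $a_i=2^{n-1}P/((n-1)!\,s_i)$. Multiplying all $n$ of these relations yields
$$
P=\prod_{i=1}^na_i=\frac{2^{n(n-1)}\,P^n}{((n-1)!)^n\,\prod_{i=1}^ns_i},
$$
so
$$
P^{\,n-1}=\frac{((n-1)!)^n}{2^{n(n-1)}}\prod_{i=1}^ns_i.
$$

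Since the right-hand side is a positive real, this equation has a unique positive solution $P$; substituting back gives a unique positive value of each $a_i$, hence a unique cross-polytope $C$, which is $n$-dimensional because all $a_i>0$. Existence follows since the construction goes through verbatim for any positive $s_1,\dots,s_n$. There is no real obstacle here; the only thing to be careful about is the explicit volume formula for the section, which is a routine cross-polytope computation, and the fact that the resulting multiplicative system decouples after taking the product of all equations.
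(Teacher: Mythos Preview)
Your proof is correct and follows essentially the same approach as the paper: parametrize the cross-polytope by its semi-axis lengths, use the standard formula $V_{n-1}(C\cap e_i^\perp)=\frac{2^{n-1}}{(n-1)!}\prod_{j\ne i}a_j$, and solve the resulting multiplicative system explicitly. The paper simply writes down the closed-form solution $a_i=\frac{1}{2s_i}\bigl((n-1)!\prod_k s_k\bigr)^{1/(n-1)}$ without the intermediate step of introducing $P=\prod_j a_j$, but your derivation arrives at exactly this value.
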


\begin{proof}
Let $C$ be the $o$-symmetric coordinate cross-polytope defined by
$$C=\conv\{[-t_ie_i,t_ie_i]: i=1,\dots,n\},$$
where $t_i>0$, $i=1,\dots,n$. Then we require that
$$V_{n-1}(C\cap e_i^{\perp})=\frac{2^{n-1}}{(n-1)!}\prod_{k\neq i}t_k=s_i,
$$
for $i=1,\dots,n$.
It is easily checked that the unique solution of this system is given by
$$
  t_i=\frac{1}{2s_i}\left((n-1)!\prod_{k=1}^ns_k\right)^{1/(n-1)},
$$
for $i=1,\dots,n$.  Since $t_i>0$, $i=1,\dots,n$, we have $\dim C=n$.
\end{proof}

\begin{cor}\label{Existence2}
If $K$ is a convex body in $\R^n$ containing the origin in its interior, then there is a unique $n$-dimensional $o$-symmetric coordinate cross-polytope $C$ in ${\mathcal{S}}(K,n-1)$ and $C$ is the unique volume minimizer in ${\mathcal{S}}(K,n-1)$.
\end{cor}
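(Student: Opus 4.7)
My plan is to combine Lemma~\ref{Existence} with Meyer's inequality~(\ref{Meyer inequality}). Since $K$ is a convex body with $o\in\inte K$, each section $K\cap e_i^\perp$ is an $(n-1)$-dimensional convex body, so $s_i:=V_{n-1}(K\cap e_i^\perp)>0$ for $i=1,\dots,n$. Feeding these values into Lemma~\ref{Existence} at once yields the unique $n$-dimensional $o$-symmetric coordinate cross-polytope $C$ with $V_{n-1}(C\cap e_i^\perp)=s_i$ for every $i$; by construction $C\in\mathcal{S}(K,n-1)$, and within the class of $o$-symmetric coordinate cross-polytopes it is the only element of $\mathcal{S}(K,n-1)$.

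To verify that $C$ minimizes $V$ on $\mathcal{S}(K,n-1)$, I would apply Meyer's inequality~(\ref{Meyer inequality}) to an arbitrary $L\in\mathcal{S}(K,n-1)$, obtaining
\[
V(L)^{n-1}\ \ge\ \frac{(n-1)!}{n^{n-1}}\prod_{i=1}^n V_{n-1}(L\cap e_i^\perp)\ =\ \frac{(n-1)!}{n^{n-1}}\prod_{i=1}^n s_i.
\]
Then I would substitute the explicit values $t_i=\frac{1}{2s_i}\bigl((n-1)!\prod_k s_k\bigr)^{1/(n-1)}$ from the proof of Lemma~\ref{Existence} into the standard cross-polytope volume formula $V(C)=(2^n/n!)\prod_{i=1}^n t_i$. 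The factors $2^n$ and $\prod_i s_i$ cancel cleanly, and a short simplification gives $V(C)^{n-1}=\frac{(n-1)!}{n^{n-1}}\prod_i s_i$. Hence $C$ itself saturates Meyer's inequality, and consequently $V(L)\ge V(C)$ for every $L\in\mathcal{S}(K,n-1)$.

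For uniqueness, suppose $L\in\mathcal{S}(K,n-1)$ also satisfies $V(L)=V(C)$. Then $L$ attains equality in~(\ref{Meyer inequality}), so by its equality clause $L$ must be a coordinate cross-polytope; coupled with $V_{n-1}(L\cap e_i^\perp)=s_i$ for every $i$ and the uniqueness half of Lemma~\ref{Existence}, this forces $L=C$. The delicate point is precisely this last step, because Meyer's equality clause by itself identifies $L$ only as a coordinate cross-polytope, without prescribing its position, so one has to leverage the prescribed section data together with Lemma~\ref{Existence} in order to single out the $o$-symmetric representative $C$; apart from this, the whole argument reduces to the routine volume computation that identifies $V(C)^{n-1}$ with the right-hand side of Meyer's inequality.
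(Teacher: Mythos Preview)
Your approach is exactly the paper's: set $s_i=V_{n-1}(K\cap e_i^\perp)>0$, invoke Lemma~\ref{Existence} to produce the unique $o$-symmetric coordinate cross-polytope $C\in\mathcal{S}(K,n-1)$, and then appeal to Meyer's inequality~(\ref{Meyer inequality}) for both minimality and uniqueness. Your explicit verification that $V(C)^{n-1}=\frac{(n-1)!}{n^{n-1}}\prod_i s_i$ is a useful addition that the paper leaves implicit.

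The ``delicate point'' you flag at the end is real, and your proposed fix does not close it. Meyer's equality clause yields only that $L$ is a coordinate cross-polytope, while Lemma~\ref{Existence} gives uniqueness only \emph{within} the $o$-symmetric class, so combining the two does not force $L=C$. In fact the gap cannot be closed as the corollary is stated: if $C=\conv\{\pm t_ie_i\}$, then for any choice of $a_i,b_i>0$ with $a_i+b_i=2t_i$ the (generally non-$o$-symmetric) cross-polytope $L=\conv\{a_ie_i,-b_ie_i:i=1,\dots,n\}$ satisfies
\[
V_{n-1}(L\cap e_i^\perp)=\frac{1}{(n-1)!}\prod_{j\ne i}(a_j+b_j)=V_{n-1}(C\cap e_i^\perp),\qquad
V(L)=\frac{1}{n!}\prod_i(a_i+b_i)=V(C),
\]
so $L\in\mathcal{S}(K,n-1)$ is also a volume minimizer. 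Thus $C$ is \emph{a} minimizer and the unique $o$-symmetric one, but not the unique minimizer overall; the paper's one-line appeal to~(\ref{Meyer inequality}) glosses over the same difficulty.
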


\begin{proof}
Let $s_i=V_{n-1}(K\cap e_i^{\perp})$, $i=1,\dots,n$, and note that $s_i>0$ for all $i$ since $o\in \inte K$.  By Lemma~\ref{Existence}, there is a unique $n$-dimensional $o$-symmetric coordinate cross-polytope $C$ in ${\mathcal{S}}(K,n-1)$.  Furthermore, (\ref{Meyer inequality}) implies that $C$ has minimal volume in the class ${\mathcal{S}}(K,n-1)$ and that $C$ is the unique volume minimizer.
\end{proof}

The previous result is clearly false in general if $o\not\in \inte K$. For example, if $K$ is a ball containing the origin and supported by the hyperplane $e_n^{\perp}$, then $V_{n-1}(K\cap e_i^{\perp})>0$ for $i=1,\dots,n-1$ and $V_{n-1}(K\cap e_n^{\perp})=0$, so no $o$-symmetric coordinate cross-polytope exists in ${\mathcal{S}}(K,n-1)$.

The following result was proved by Betke and McMullen \cite[Theorem~2]{BetM83}.  Their motivation and notation were different to ours. To obtain the proposition as we state it, in \cite[Theorem~2]{BetM83} take $d=n$, $\alpha_i=a_i$, and $u_i=e_i$, $i=1,\dots,n$, and note that the zonotope $Z({\mathcal{L}})$ is then the coordinate box $\sum_{i=1}^n a_i[-e_i,e_i]$.

\begin{prop}\label{generalBetkeMcMullen}
Let $K$ be a compact convex set in $\R^n$ and let $a_1,\dots,a_n$ be positive real numbers.  Then
$$\min \{a_i: i=1,\dots,n\}\leq \frac{1}{V_{n-1}(K)}\sum_{i=1}^na_iV_{n-1}(K|e_i^\perp)\leq \left(\sum_{i=1}^na_i^2\right)^{1/2}.$$
Equality holds on the left (or on the right) if and only if the support of $S(K,\cdot)$ is contained in the set of directions of the contact points of the coordinate box $\sum_{i=1}^n a_i[-e_i,e_i]$ with its inscribed (or circumscribed, respectively) ball.
\end{prop}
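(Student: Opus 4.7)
The plan is to pass to the surface area measure of $K$ via Cauchy's projection formula
$$V_{n-1}(K|e_i^{\perp}) = \frac{1}{2}\int_{S^{n-1}} |u\cdot e_i|\,dS(K,u),$$
combined with $V_{n-1}(K)=\tfrac{1}{2}S(K,S^{n-1})$. Multiplying by $a_i$ and summing, the middle quantity in the proposition becomes the $S(K,\cdot)$-weighted mean on $S^{n-1}$ of the continuous function
$$f(u)=\sum_{i=1}^n a_i\,|u\cdot e_i|,$$
so it automatically lies in the interval $[\min_{u\in S^{n-1}}f(u),\;\max_{u\in S^{n-1}}f(u)]$. The problem therefore reduces to evaluating these two extrema of $f$ on the sphere.

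Next I would compute these extrema pointwise. Cauchy--Schwarz applied to the vectors $(a_1,\dots,a_n)$ and $(|u_1|,\dots,|u_n|)$ immediately yields
$$f(u)\le\Bigl(\sum_{i=1}^n a_i^2\Bigr)^{1/2},$$
with equality iff $|u_i|=a_i/(\sum_j a_j^2)^{1/2}$ for every $i$. Setting $a=\min_i a_i$, the chain
$$f(u)\ge a\sum_{i=1}^n|u_i|\ge a\Bigl(\sum_{i=1}^n u_i^2\Bigr)^{1/2}=a$$
gives the lower bound; equality in the first step forces $u_i=0$ whenever $a_i>a$, while equality in the second forces at most one coordinate to be nonzero, so that $u=\pm e_k$ for some index $k$ with $a_k=a$.

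The final step is to match these extremal sets with the contact directions appearing in the proposition. The coordinate box $B=\sum_{i=1}^n a_i[-e_i,e_i]$ has its inscribed ball of radius $a$ touching $\bd B$ exactly at the face centres $\pm a\,e_k$ with $a_k=a$; the unit vectors pointing from the origin to these contact points form precisely the argmin of $f$. Its circumscribed ball has radius $(\sum_i a_i^2)^{1/2}$ and touches $B$ at the vertices $(\pm a_1,\dots,\pm a_n)$, whose normalisations form precisely the argmax of $f$. The equality clause then follows from the elementary fact that the $S(K,\cdot)$-average of a continuous function coincides with its maximum (respectively minimum) over $S^{n-1}$ if and only if $\supp S(K,\cdot)$ is contained in the level set where that extremum is attained. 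I expect the main subtlety to lie not in the inequality itself but in this last geometric identification, especially in the case when several of the $a_i$ coincide, so that the argmin set of $f$ becomes larger and must be matched with the non-unique family of contact points of the inscribed ball.
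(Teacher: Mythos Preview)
Your argument is correct. The paper does not supply its own proof of this proposition; it simply quotes the result from Betke and McMullen \cite[Theorem~2]{BetM83} and explains how to specialise their notation (taking $d=n$, $\alpha_i=a_i$, $u_i=e_i$, so that the zonotope $Z(\mathcal{L})$ becomes the coordinate box $\sum_i a_i[-e_i,e_i]$). There is therefore no in-paper proof to compare against.

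Your direct approach---rewriting the middle expression via Cauchy's projection formula as the $S(K,\cdot)$-average of $f(u)=\sum_i a_i|u_i|$, then computing $\min_{S^{n-1}} f$ and $\max_{S^{n-1}} f$ by elementary inequalities---is exactly the natural argument and is essentially what underlies the Betke--McMullen result in this special case. The identification of the extremal sets of $f$ with the contact directions of the inscribed and circumscribed balls is also correct, including in the degenerate case where several $a_i$ coincide: the inscribed ball then touches several pairs of opposite facets, and the argmin of $f$ is correspondingly the set $\{\pm e_k: a_k=\min_j a_j\}$, which is precisely what your two-step equality analysis gives. One tacit assumption worth making explicit is that $V_{n-1}(K)>0$ (equivalently $\dim K\ge n-1$), so that the quotient and the weighted average are well defined; the paper's statement carries the same implicit restriction.
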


\begin{cor}\label{BetkeMcMullen}
If $K$ is a compact convex set in $\R^n$, then
\begin{equation}\label{BetkeMcMullen1}
V_{n-1}(K) \geq \frac{1}{\sqrt{n}}\sum_{i=1}^nV_{n-1}(K|e_i^\perp)
\geq \frac{1}{\sqrt{n}}\sum_{i=1}^nV_{n-1}(K\cap e_i^\perp).
\end{equation}
Equality holds in the left-hand inequality if and only if either $\dim K<n-1$, or $\dim K=n-1$ and $K$ is orthogonal to a diagonal of a coordinate cube, or $\dim K=n$ and $K$ is a regular coordinate cross-polytope.  Equality holds in the right-hand inequality involving $V_{n-1}(K)$ if and only if either $\dim K<n-1$ or $\dim K=n$ and $K$ is an $o$-symmetric regular coordinate cross-polytope.
\end{cor}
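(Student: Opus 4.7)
The plan is to derive the left inequality in~(\ref{BetkeMcMullen1}) directly from Proposition~\ref{generalBetkeMcMullen} applied with $a_i=1/\sqrt n$ for every $i$ (so that $(\sum_i a_i^2)^{1/2}=1$), and to obtain the right inequality from $K\cap e_i^\perp\subset K|e_i^\perp$ together with the monotonicity of $V_{n-1}$. For the equality analysis, the most transparent route is to insert the Pythagorean-type inequality~(\ref{SQUARE33}) and Cauchy--Schwarz to get the refined chain
\[
V_{n-1}(K)\ \geq\ \Big(\sum_{i=1}^n V_{n-1}(K|e_i^\perp)^2\Big)^{1/2}\ \geq\ \frac{1}{\sqrt n}\sum_{i=1}^n V_{n-1}(K|e_i^\perp)\ \geq\ \frac{1}{\sqrt n}\sum_{i=1}^n V_{n-1}(K\cap e_i^\perp).
\]
Equality in the left-hand inequality of~(\ref{BetkeMcMullen1}) then corresponds to simultaneous equality in the first two links of this chain, while equality in the right-hand inequality requires equality throughout.

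Recall from the introduction that equality in~(\ref{SQUARE33}) holds iff $\dim K\leq n-1$ or $K$ is a (not necessarily regular) coordinate cross-polytope, while Cauchy--Schwarz equality forces $V_{n-1}(K|e_i^\perp)$ to be independent of $i$. I would then split into three cases. The case $\dim K<n-1$ is trivial since all quantities vanish. When $\dim K=n-1$, one has $K\subset u^\perp$ for some unit $u\in S^{n-1}$, and a standard projection computation yields $V_{n-1}(K|e_i^\perp)=V_{n-1}(K)\,|u_i|$; the Cauchy--Schwarz equality condition then forces $|u_i|=1/\sqrt n$ for each $i$, so $u$ is a diagonal of the coordinate cube. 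When $\dim K=n$, $K$ is a translate of $\conv\{[-t_ie_i,t_ie_i]:i=1,\dots,n\}$ and $V_{n-1}(K|e_j^\perp)=\frac{2^{n-1}}{(n-1)!}\prod_{i\neq j}t_i$; equating across $j$ forces all $t_i$ to be equal, so $K$ is a regular coordinate cross-polytope.

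For equality in the right-hand inequality of~(\ref{BetkeMcMullen1}) one must additionally have $V_{n-1}(K\cap e_i^\perp)=V_{n-1}(K|e_i^\perp)$ for every $i$. In the $\dim K=n-1$ case above, $K\cap e_i^\perp$ lies in $u^\perp\cap e_i^\perp$ and so has dimension at most $n-2$; consequently $V_{n-1}(K\cap e_i^\perp)=0$, while $V_{n-1}(K|e_i^\perp)>0$, ruling out equality unless $\dim K\leq n-2$. In the $\dim K=n$ regular case, writing $K=t_0+\lambda C^n$, the section $K\cap e_i^\perp$ is a regular $(n-1)$-cross-polytope of radius $\lambda-|(t_0)_i|$ while the projection has radius $\lambda$, so equality for every $i$ forces $t_0=o$ and hence $K$ must be $o$-symmetric. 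I expect the main obstacle to be the careful verification of $V_{n-1}(K|e_i^\perp)=V_{n-1}(K)\,|u_i|$ for lower-dimensional $K$ and the translation bookkeeping needed to distinguish cleanly between translated coordinate cross-polytopes (which suffice for equality in the projection inequality) and the $o$-symmetric ones (required for equality in the section inequality).
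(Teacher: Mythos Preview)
Your derivation of the inequalities is exactly the paper's: apply the right-hand estimate in Proposition~\ref{generalBetkeMcMullen} with all $a_i$ equal, and use $K\cap e_i^\perp\subset K|e_i^\perp$ for the second inequality.

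Where you diverge is in the equality analysis. The paper reads off the equality condition for the left inequality directly from Proposition~\ref{generalBetkeMcMullen}: with $a_i=1$ the circumscribed ball of the cube touches only at the vertices, so equality forces $S(K,\cdot)$ to be supported on the $2^n$ diagonal directions $(\pm 1,\dots,\pm 1)/\sqrt n$, and the paper then interprets this geometrically in each dimension case. You instead factor the inequality through the Pythagorean-type bound \eqref{SQUARE33} and Cauchy--Schwarz, and combine the two separate equality conditions. Both routes are consequences of the same Betke--McMullen result (Proposition~\ref{generalBetkeMcMullen}), so your detour buys no extra generality; its advantage is that the $\dim K=n-1$ case becomes the transparent computation $V_{n-1}(K|e_i^\perp)=|u_i|\,V_{n-1}(K)$ followed by Cauchy--Schwarz, and the $\dim K=n$ case reduces to comparing the explicit products $\prod_{i\ne j}t_i$. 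The paper's route is shorter and avoids invoking \eqref{SQUARE33}, whose equality characterization is only proved later as Theorem~\ref{BetkeMcMullenSquare}; if you want to keep your argument self-contained within the paper's ordering, you would need to cite Betke--McMullen directly rather than the forward reference.

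One small gap to watch: you pass from ``$K$ is a coordinate cross-polytope'' (the stated equality condition for \eqref{SQUARE33}) to ``$K$ is a translate of $\conv\{[-t_ie_i,t_ie_i]\}$'' without comment. The paper's definition of a coordinate cross-polytope only asks that the generating segments share \emph{a} common point, not their midpoints, so central symmetry (up to translation) is not automatic from the definition alone. You would need a line explaining why the equality condition actually forces the facet normals to come in antipodal pairs with equal mass---which follows once you unpack Proposition~\ref{generalBetkeMcMullen}'s condition on $S(K,\cdot)$, but not from the phrase ``coordinate cross-polytope'' by itself. Your analysis of the section case (forcing $t_0=o$) is fine once this is in hand.
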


\begin{proof}
The right-hand inequality in Proposition~\ref{generalBetkeMcMullen}, with $a_i=1$ for $i=1,\dots,n$, immediately gives the left-hand inequality in (\ref{BetkeMcMullen1}).  If $\dim K<n-1$, both sides of the inequality are zero, and if $\dim K\ge n-1$, the equality condition follows easily from that of Proposition~\ref{generalBetkeMcMullen}.

Suppose that equality holds in the right-hand inequality involving $V_{n-1}(K)$. Then the equality condition for the left-hand inequality applies.  If $\dim K=n-1$ and $K$ is orthogonal to a diagonal of a coordinate cube, then $V_{n-1}(K\cap e_i^\perp)=0$ for $i=1,\dots,n$, so this possibility is eliminated.  Suppose that $\dim K=n$ and $K$ is a regular coordinate cross-polytope.  Since equality holds throughout (\ref{BetkeMcMullen1}) and $V_{n-1}(K|e_i^\perp)\ge V_{n-1}(K\cap e_i^\perp)$ for $i=1,\dots,n$, we have $V_{n-1}(K|e_i^\perp)= V_{n-1}(K\cap e_i^\perp)$ for $i=1,\dots,n$, and it follows easily that $K$ is $o$-symmetric.
\end{proof}

The lower bounds in (\ref{BetkeMcMullen1}) are not always better than the corresponding lower bounds in (\ref{trivmax}) and (\ref{trivmax2}) for $m=n-1$. In fact, the following considerably stronger result can also be obtained from Proposition~\ref{generalBetkeMcMullen}.

\begin{thm}\label{BetkeMcMullenSquare}
If $K$ is a compact convex set in $\R^n$, then
\begin{equation}\label{SQUARE}
V_{n-1}(K)^2 \geq \sum_{i=1}^nV_{n-1}(K|e_i^\perp)^2\geq \sum_{i=1}^nV_{n-1}(K\cap e_i^\perp)^2.
\end{equation}
Equality holds in the left-hand inequality if and only if either $\dim K\le n-1$ or $\dim K=n$ and $K$ is a coordinate cross-polytope.  Equality holds in the right-hand inequality involving $V_{n-1}(K)$ if and only if either $\dim K<n-1$, or $\dim K=n-1$ and $K$ is contained in a coordinate hyperplane, or $\dim K=n$ and $K$ is an $o$-symmetric coordinate cross-polytope.
\end{thm}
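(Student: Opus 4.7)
The plan is to obtain the first inequality in~(\ref{SQUARE}) as a one-line consequence of Proposition~\ref{generalBetkeMcMullen} and the second from an elementary containment, leaving the equality conditions as the substantive work. Applying the right-hand bound of Proposition~\ref{generalBetkeMcMullen} with $a_i=V_{n-1}(K|e_i^\perp)$ gives
\[
\sum_{i=1}^n V_{n-1}(K|e_i^\perp)^2 \le V_{n-1}(K)\Bigl(\sum_{i=1}^n V_{n-1}(K|e_i^\perp)^2\Bigr)^{1/2},
\]
so dividing (or noting the inequality is trivial when the sum is zero) yields $V_{n-1}(K)^2\ge\sum_i V_{n-1}(K|e_i^\perp)^2$. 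Degenerate cases with some $a_i=0$ can be handled by applying the estimate to $K+\varepsilon B^n$ and letting $\varepsilon\to 0^+$, using continuity of intrinsic volumes. The second inequality is immediate from $K\cap e_i^\perp\subset K|e_i^\perp$ and the monotonicity of $V_{n-1}$.

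For the equality of the left-hand inequality we split on $\dim K$. If $\dim K\le n-2$ both sides vanish, and if $\dim K=n-1$ the identity $V_{n-1}(K)^2=\sum V_{n-1}(K|e_i^\perp)^2$ holds automatically by Proposition~\ref{CBm} with $m=n-1$. If $\dim K=n$, equality in Proposition~\ref{generalBetkeMcMullen} forces $S(K,\cdot)$ to be supported on the $2^n$ outer-normal directions of the vertices $(\pm a_1,\dots,\pm a_n)$ of the box $\sum_i a_i[-e_i,e_i]$, where $a_i=V_{n-1}(K|e_i^\perp)$. Writing a candidate coordinate cross-polytope (with common point translated to $o$) as $\conv\{\alpha_i^-e_i,\alpha_i^+e_i\}$ with $\alpha_i^-<0<\alpha_i^+$, its facet in orthant $\sigma\in\{-,+\}^n$ has outer normal proportional to $(\sigma_1/|\alpha_1^{\sigma_1}|,\dots,\sigma_n/|\alpha_n^{\sigma_n}|)$; matching these against $(\sigma_1 a_1,\dots,\sigma_n a_n)$ across pairs of adjacent orthants forces $\alpha_i^+=|\alpha_i^-|$ for every $i$, and $K$ is a coordinate cross-polytope as claimed.

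For the equality of the right-hand inequality (involving $V_{n-1}(K)$), equality is needed in both steps of the chain, i.e., the previous condition together with $V_{n-1}(K|e_i^\perp)=V_{n-1}(K\cap e_i^\perp)$ for every $i$. If $\dim K\le n-2$ all terms vanish. If $\dim K=n-1$ the first equality is automatic, and the second forces $K$ to be contained in some coordinate hyperplane $e_j^\perp$; otherwise every section is at most $(n-2)$-dimensional, has $V_{n-1}=0$, yet some projection is strictly positive. If $\dim K=n$, the first equality makes $K$ a coordinate cross-polytope, and an explicit parameterization of $K\cap e_i^\perp$ (write a point of $K$ as a convex combination of the vertices $\alpha_j^\pm e_j+p$ of the cross-polytope and impose $x_i=0$) shows that section and projection coincide in $e_i^\perp$ exactly when the center of symmetry $p$ satisfies $p_i=0$, so $V_{n-1}(K|e_i^\perp)=V_{n-1}(K\cap e_i^\perp)$ for every $i$ forces $p=o$ and hence $K$ to be an $o$-symmetric coordinate cross-polytope.

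The main obstacle is the equality analysis in the $n$-dimensional case: Proposition~\ref{generalBetkeMcMullen} only supplies the directional support of $S(K,\cdot)$, and decoding this into the geometric statement that $K$ is a coordinate cross-polytope with the correct arm symmetry requires matching facet normals across all $2^n$ orthants; pinning down the position (and hence $o$-symmetry) in the right-hand equality requires the additional section-equals-projection condition.
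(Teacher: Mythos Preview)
Your argument follows the same route as the paper: apply Proposition~\ref{generalBetkeMcMullen} with $a_i=V_{n-1}(K|e_i^\perp)$ for the left-hand inequality, invoke Proposition~\ref{CBm} for the $(n-1)$-dimensional equality case, and read off the remaining equality conditions from the proposition. You supply considerably more detail than the paper, which dismisses the equality analysis in one sentence (``follows easily'') and calls the section part ``straightforward''; your approximation $K+\varepsilon B^n$ to handle vanishing $a_i$ is also a point the paper glosses over.

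There is, however, a genuine gap in your $\dim K=n$ equality step that the paper's terseness conceals rather than avoids. You correctly extract from Proposition~\ref{generalBetkeMcMullen} that $S(K,\cdot)$ is supported on the $2^n$ directions $(\pm a_1,\dots,\pm a_n)/|a|$, but then you only check \emph{which coordinate cross-polytopes} have those facet normals; you never show that a body with those facet normals must be a cross-polytope in the first place. In fact that implication fails. Perturb the regular coordinate cross-polytope by pushing a single facet outward, say $h_\sigma=1$ for all sign patterns except $h_{(+,\dots,+)}=1+\varepsilon$: the resulting polytope still has all $2^n$ facets with normals $(\pm1,\dots,\pm1)/\sqrt n$, so by the Cauchy formula $V_{n-1}(K|e_i^\perp)=S(K)/(2\sqrt n)$ for every $i$ and equality in~\eqref{SQUARE} persists; yet for $n\ge 3$ this body has $9$ (not $6$) vertices and is not a cross-polytope of any kind. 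So the stated equality characterization is itself imprecise, and neither your argument nor the paper's one-line appeal to Proposition~\ref{generalBetkeMcMullen} actually establishes it.
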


\begin{proof}
The right-hand inequality in Proposition~\ref{generalBetkeMcMullen}, with $a_i=V(K|e_i^{\perp})$ for $i=1,\dots,n$, immediately gives the left-hand inequality in (\ref{SQUARE}).  When $\dim K<n-1$, both sides are zero, and when $\dim K=n-1$, equality holds by Proposition~\ref{CBm} with $m=n-1$.  Otherwise, if $\dim K=n$, the equality condition follows easily from that of Proposition~\ref{generalBetkeMcMullen}. The right-hand inequality involving $V_{n-1}(K)$ and its equality condition is then straightforward.
\end{proof}

It is easy to check, by partial differentiation with respect to $a_i$, $i=1,\dots,n$, that the choice of the $a_i$'s in the previous proof is optimal.
In particular, we have
$$\sum_{i=1}^nV_{n-1}(K|e_i^\perp)^2\ge \frac{1}{\sqrt{n}}\sum_{i=1}^nV_{n-1}(K| e_i^\perp),$$
an inequality that follows from the fact that the 2-mean of a finite set of nonnegative numbers is greater than or equal to the 1-mean (average).  Moreover, the bound involving projections in (\ref{SQUARE}) is always at least as good as the easy lower bound (\ref{trivmax2}), since the maximum of a finite set of nonnegative numbers is less than or equal to their $p$th sum for any $p>0$ and in particular when $p=2$; see, for example, \cite[(B.6), p.~414]{Gar06}.

The Pythagorean inequality (\ref{Pythagorean}) for $m=n-1$ and (\ref{SQUARE}) split the inequality (\ref{cgtriv}), since together they imply that
$$V_{n-1}(K|u^{\perp})^2\le \sum_{i=1}^nV_{n-1}(K|e_i^\perp)^2 \le V_{n-1}(K)^2,$$
for all $u\in S^{n-1}$.

The following result follows directly from Lemma~\ref{Existence} and Theorem~\ref{BetkeMcMullenSquare} (compare the proof of Corollary~\ref{Existence2}).

\begin{cor}\label{Minimizer}
If $K$ is a convex body in $\R^n$ containing the origin in its interior, then the unique surface area minimizer in ${\mathcal{S}}(K,n-1)$ is the unique $n$-dimensional $o$-symmetric coordinate cross-polytope in ${\mathcal{S}}(K,n-1)$.
\end{cor}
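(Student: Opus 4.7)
The plan is to imitate the proof of Corollary~\ref{Existence2}, replacing Meyer's inequality by the right-hand inequality involving $V_{n-1}(K)$ in Theorem~\ref{BetkeMcMullenSquare}. Since the surface area equals $2V_{n-1}$, minimizing surface area in $\mathcal{S}(K,n-1)$ is equivalent to minimizing $V_{n-1}$ there. Set $s_i=V_{n-1}(K\cap e_i^\perp)$, and observe that $s_i>0$ for every $i$ because $o\in\inte K$. Lemma~\ref{Existence} then furnishes a unique $n$-dimensional $o$-symmetric coordinate cross-polytope $C\in\mathcal{S}(K,n-1)$.

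Next, I would apply Theorem~\ref{BetkeMcMullenSquare} to an arbitrary $L\in\mathcal{S}(K,n-1)$ to get
\begin{equation*}
V_{n-1}(L)^2\ge \sum_{i=1}^n V_{n-1}(L\cap e_i^\perp)^2=\sum_{i=1}^n s_i^2,
\end{equation*}
so this sum is a common lower bound for $V_{n-1}(L)^2$ over the class $\mathcal{S}(K,n-1)$. Since $C$ is an $n$-dimensional $o$-symmetric coordinate cross-polytope, the equality case in the right-hand inequality of (\ref{SQUARE}) applies to $C$, and therefore $V_{n-1}(C)^2=\sum_{i=1}^n s_i^2$. Hence $C$ is a minimizer.

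For uniqueness, let $L\in\mathcal{S}(K,n-1)$ be any minimizer; then equality must hold in the right-hand inequality of (\ref{SQUARE}), so by Theorem~\ref{BetkeMcMullenSquare} one of three cases occurs. The case $\dim L<n-1$ forces $V_{n-1}(L\cap e_i^\perp)=0$ for all $i$, contradicting $s_i>0$. The case $\dim L=n-1$ with $L$ contained in some coordinate hyperplane $e_j^\perp$ forces $V_{n-1}(L\cap e_i^\perp)=0$ for $i\ne j$, again contradicting $s_i>0$. The only remaining possibility is that $\dim L=n$ and $L$ is an $o$-symmetric coordinate cross-polytope; the uniqueness assertion in Lemma~\ref{Existence} then gives $L=C$.

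I do not foresee any real obstacle, since everything reduces to plugging the right-hand inequality of Theorem~\ref{BetkeMcMullenSquare} into the framework already established for Corollary~\ref{Existence2}. The only mild subtlety is ruling out the two degenerate equality cases in Theorem~\ref{BetkeMcMullenSquare}, but positivity of the $s_i$ (which is exactly what $o\in\inte K$ buys) disposes of both.
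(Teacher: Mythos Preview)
Your proposal is correct and is exactly the argument the paper has in mind: it says the result ``follows directly from Lemma~\ref{Existence} and Theorem~\ref{BetkeMcMullenSquare} (compare the proof of Corollary~\ref{Existence2}),'' and you have simply written out those details, including the elimination of the degenerate equality cases via $s_i>0$.
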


Now we consider the problem of finding lower bounds for the $j$th intrinsic volume of sets in ${\mathcal{S}}(K,n-1)$ when $j<n-1$.  In view of Corollaries~\ref{Existence2} and~\ref{Minimizer}, it would be reasonable to conjecture that for any convex body $K$ in $\R^n$ containing the origin in its interior and any $j\in \{1,\dots,n-1\}$, the minimizer of the $j$th intrinsic volume in ${\mathcal{S}}(K,n-1)$ is the unique $n$-dimensional $o$-symmetric coordinate cross-polytope in ${\mathcal{S}}(K,n-1)$.  However, it turns out that this is not true in general. Indeed, when $n=3$ and $j=1$, a counterexample is given by
$$K_1=\conv\{B^3\cap e_1^{\perp}, B^3\cap e_2^{\perp}, B^3\cap e_3^{\perp}\}.$$ To see this, note that by (\ref{V1form2}), we have
$$V_1(K_1)=\frac{1}{\pi}\int_{S^2}h_{K_1}(u)\,du.$$
The computation of the latter integral is somewhat tedious, so we just provide a sketch.  We consider the part of $S^2$ lying in the positive octant for which $h_K(u)$ equals the support function of the unit disk in the $yz$-plane.  Using spherical polar coordinate angles $(\theta,\varphi)$, $0\le\theta\le 2\pi$, $0\le\varphi\le\pi$, we find that
$$h_{K_1}(\theta,\varphi)=\left(\sin^2\theta\sin^2\varphi+\cos^2\varphi
\right)^{1/2},$$
for $\pi/4\le \theta\le\pi/2$ and $0\le \varphi\le\arctan(\csc\theta)$.  By symmetry, we then have
$$\int_{S^2}h_{K_1}(u)\,du=48\int_{\frac{\pi}{4}}^{\frac{\pi}{2}}
\int_0^{\arctan(\csc\theta)}
\left(\sin^2\theta\sin^2\varphi+\cos^2\varphi\right)^{1/2}\sin\varphi
\,d\varphi\,d\theta.$$
Using standard substitutions, the inner integral evaluates to
$$\frac{1}{2}-\frac{\sin^2\theta}{\sqrt{2}(1+\sin^2\theta)}-
\frac{\sin^2\theta}{2\cos\theta}\log\left(\frac{(\sqrt{2}+\cos\theta)\sin\theta}{
(\cos\theta+1)\sqrt{1+\sin^2\theta}}\right).$$
Numerical integration then yields  $V_1(K_1)=3.8663...$.

Now recall the definition of the standard $o$-symmetric regular coordinate cross-polytope $C^n$ in $\R^n$ and let $K_2=\sqrt{\pi/2}C^3$.
Since $C^3|e_i^{\perp}$ is a square of side length $\sqrt{2}$, we have $$V_2(K_2\cap e_i^{\perp})=\pi=V_2(K_1\cap e_i^{\perp}),$$
for $i=1,2,3$.  Using the formula \cite[(A.36), p.~405]{Gar06} for the $i$th intrinsic volume of a convex polytope, with $n=3$, $i=1$, and $P=C^3$, we see that
$$V_1(C^3)=\sum_{F\in {\mathcal{F}}_1(C^3)}\gamma(F,C^3)V_1(F),$$
where ${\mathcal{F}}_1(C^3)$ is the set of edges of $C^3$ and $\gamma(F,C^3)$ is the normalized exterior angle of $C^3$ at the edge $F$.  For each of the 12 edges of $C^3$ we have $V_1(F)=\sqrt{2}$ and it is easy to calculate that $\gamma(F,C^3)=\arccos(1/3)/2\pi$.  Therefore
\begin{equation}\label{V1cross}
V_1(C^3)=12\sqrt{2}\frac{\arccos(1/3)}{2\pi},
\end{equation}
so
$$V_1(K_2)=\sqrt{\pi/2}V_1(C^3)=\frac{6\arccos(1/3)}{\sqrt{\pi}}=4.1669...
>V_1(K_1).$$

The minimizer of the $j$th intrinsic volume in ${\mathcal{S}}(K,n-1)$ must clearly be equal to the convex hull of its intersections with the coordinate hyperplanes, but it is not obvious why it should be 1-unconditional.  One can attempt to prove this by considering the 1-unconditional convex body $K^s$ that results from performing successive Steiner symmetrizations on $K$ with respect to each of the coordinate hyperplanes (i.e., $n$ symmetrizations, in any order).  Then $V_j(K^s)\le V_j(K)$, since Shephard \cite{She64} proved that $j$th intrinsic volumes do not increase under Steiner symmetrization. (Shephard does not state this fact explicitly, but it follows from \cite[(6), p.~232]{She64}, with $K_1=\cdots=K_j=K$ and $K_{j+1}=\cdots=K_n=B^n$, and \cite[(17), p.~264]{She64}.)  Also,  $V_{n-1}(K\cap e_i^{\perp})$ does not change under Steiner symmetrization with respect to $e_k^{\perp}$ if $k\neq i$, but if $k=i$ the intersection with $e_k^{\perp}$ may increase.

\section{The case $m=1$: The Betke-McMullen conjecture}\label{BMconje}

The following result confirms a conjecture of Betke and McMullen (the case $r=1$ and $s=d-1$ of \cite[Conjecture~3(b), p.~537]{BetM83}).

\begin{thm}\label{BMproof}
Let $n\ge 3$.  For each compact convex set $K$ in $\R^n$ with $\dim K\ge 1$, define
\begin{equation}\label{BMF}
F(K)=\frac{V_1(K)}{\sum_{i=1}^n V_1(K|e_i^\perp)}.
\end{equation}
Then $F(K)$ is minimal if and only if $K$ is a regular coordinate cross-polytope.
\end{thm}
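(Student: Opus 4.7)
The plan is to (i) establish existence of a minimizer, (ii) reduce to 1-unconditional bodies by symmetrization, and (iii) identify the minimizer as a regular coordinate cross-polytope by a variational argument.

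For step (i), since $F$ is scale- and translation-invariant, a minimizing sequence may be normalized to contain $o$ and have $V_1=1$, which bounds the diameter, so Blaschke selection and the continuity of $V_1$ and of $K\mapsto K|e_i^\perp$ in the Hausdorff metric give a minimizer $K_0$. For step (ii), set
\begin{equation*}
K^\ast := 2^{-n}\sum_{\varepsilon\in\{\pm 1\}^n}\varepsilon K,
\end{equation*}
which is 1-unconditional. By the Minkowski-linearity and orthogonal invariance of $V_1$, $V_1(K^\ast)=V_1(K)$; and since projection onto $e_j^\perp$ absorbs reflection in $e_j^\perp$, a parallel computation gives $V_1(K^\ast|e_j^\perp)=V_1(K|e_j^\perp)$ for each $j$. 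Hence $F(K^\ast)=F(K)$, and we may take the minimizer $K_0$ to be 1-unconditional.

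For step (iii), I use that both $V_1(K)$ and $V_1(K|e_i^\perp)$ are Minkowski-linear functionals of $h_K$ via (\ref{V1form2}). Minimality at $K_0$ with $c_0:=F(K_0)$ is therefore equivalent to the global inequality $V_1(L)\ge c_0 \sum_{i=1}^n V_1(L|e_i^\perp)$ for every compact convex $L$, with equality at $L=K_0$. Writing both sides as integrals of $h_L$, this becomes
\begin{equation*}
\int_{S^{n-1}} h_L(u)\, d\mu(u) \;\ge\; 0 \quad\text{for all compact convex $L$, with equality at $L=K_0$,}
\end{equation*}
where $d\mu := \kappa_{n-2}\,du \;-\; c_0\,\kappa_{n-1}\sum_{i=1}^n dv_i$, with $du$ the surface measure on $S^{n-1}$ and $dv_i$ the arc-length measure on $S^{n-1}\cap e_i^\perp$. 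Analysis of this extremal condition together with 1-unconditionality should force the surface area measure $S(K_0,\cdot)$ to concentrate on the directions $\pm e_1,\dots,\pm e_n$, which characterizes $K_0$ as a coordinate cross-polytope $K_{\vec t}=\conv\{\pm t_i e_i\}$. Finally, an explicit computation of $V_1(K_{\vec t})$ via the polytopal formula used in (\ref{V1cross}) and a Lagrange multiplier argument exploiting permutation symmetry show that the minimum of $F$ over such cross-polytopes is achieved uniquely (modulo scaling) at $t_1=\cdots=t_n$, i.e., at the regular coordinate cross-polytope.

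The main obstacle is the extremal analysis in step (iii): the signed measure $\mu$ mixes a smooth full-dimensional measure with singular measures on lower-dimensional coordinate equators, and the test class of support functions is only a convex cone, not a linear space, so direct linear-duality arguments do not apply. Extracting the cross-polytope structure requires concrete spherical-geometric analysis; the paper splits this by dimension, handling $n=3$ in the main text by explicit integration and deferring $n\ge 4$ to an appendix. A secondary subtlety is that the Minkowski symmetrization $K\mapsto K^\ast$ is not injective, so propagating uniqueness from the 1-unconditional class to the full class requires a strictness argument in the extremal condition.
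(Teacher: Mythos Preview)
Your steps (i) and (ii) are sound and parallel the paper's Lemma~\ref{lemreduct}, though the paper symmetrizes over the \emph{full} symmetry group of $Q^n$ (sign changes \emph{and} coordinate permutations), not just $\{\pm 1\}^n$. That stronger reduction makes all $V_1(K|e_i^\perp)$ equal and is used essentially in what follows.

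The genuine gap is step (iii). First, a concrete error: if $S(K_0,\cdot)$ is concentrated on $\{\pm e_1,\dots,\pm e_n\}$ then $K_0$ is a coordinate \emph{box}, not a cross-polytope; the surface area measure of a coordinate cross-polytope lives on the cube-diagonal directions $(\pm 1,\dots,\pm 1)/\sqrt{n}$. Second, the ``extremal condition'' you write down is simply a restatement of minimality: the inequality $\int_{S^{n-1}} h_L\,d\mu\ge 0$ for all $L$ with equality at $K_0$ is tautologically equivalent to $F(K_0)=c_0=\min F$, and no first-order or duality information is extracted from it. The phrase ``should force'' is a hope, not an argument, and you acknowledge as much in the final paragraph.

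The paper's route is quite different from a variational characterization. After the stronger symmetrization, it expresses $F(K)$ as a ratio of weighted integrals of $h_K$ over a simplex-like region of $S^{n-1}\cap e_n^\perp$ (Lemma~\ref{leman}), then uses a Chebyshev-type inequality (Lemma~\ref{Cheblem}) to reduce $F(K)\ge F(C^n)$ to the monotonicity of an averaged function $J_K$ (Lemma~\ref{lemdiff}). Proving that $J_K$ is increasing (Lemma~\ref{lembm3}) is the analytic core; for $n=3$ it is a short calculus computation, while for $n\ge 4$ it requires an elaborate inductive argument with several changes of variables, carried out in the appendix. None of this structure is present in your proposal, and there is no evident shortcut around it.
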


Note that the previous theorem also holds when $n=2$, by Corollary~\ref{BetkeMcMullen}, but then $F(K)$ is also minimal when $K$ is a line segment orthogonal to a diagonal of a coordinate square.  This explains why we prefer to state Theorem~\ref{BMproof} for $n\ge 3$ and assume throughout the proof that this restriction holds.

We shall present the main argument in a series of lemmas.  This will include the complete proof for $n=3$.  The case $n\ge 4$ of Lemma~\ref{lembm3} required for the full result is contained in the appendix.

\begin{lem}\label{lemreduct}
The functional $F$ defined by \eqref{BMF} attains its minimum in the class of convex bodies $K$ in $\R^n$ with the symmetries of the coordinate cube $Q^n$ (or, equivalently, the regular coordinate cross-polytope $C^n$) satisfying $C^n\subset K\subset Q^n$ and such that $K$ is the convex hull of its intersections with coordinate hyperplanes.
\end{lem}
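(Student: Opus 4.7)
The plan is to apply a sequence of reductions, each leaving $F$ unchanged or decreasing it, starting from an arbitrary nondegenerate compact convex set and ending in a body of the required form. For existence of a minimizer, observe that $F$ is invariant under translations and positive dilations, so normalize $V_1(K) = 1$ and translate so that $o \in K$. The $1$st intrinsic volume of a segment equals its length, and monotonicity of $V_1$ together with the fact that $K$ contains its diameter segment yields $\diam K \leq V_1(K) = 1$, so $K \subset B^n$. Blaschke selection and Hausdorff continuity of $V_1$ on $K$ and on its projections then produce a minimizer.

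The first structural reduction is to a body invariant under the symmetry group $G$ of $Q^n$ (the hyperoctahedral group of signed coordinate permutations), via Minkowski averaging: set $K_G = \frac{1}{|G|}\sum_{g \in G} gK$. Additivity of $V_1$ under Minkowski sums and its invariance under orthogonal maps give $V_1(K_G) = V_1(K)$. Since projection distributes over Minkowski sums, additivity also yields
$$
\sum_{i=1}^n V_1(K_G|e_i^{\perp}) = \frac{1}{|G|}\sum_{g \in G}\sum_{i=1}^n V_1\bigl(K|(g^{-1}e_i)^{\perp}\bigr).
$$
For each fixed $g$ the vectors $g^{-1}e_1,\dots,g^{-1}e_n$ form, up to signs, a permutation of $e_1,\dots,e_n$, and $(\pm e_j)^{\perp} = e_j^{\perp}$, so the inner sum equals $\sum_{j=1}^n V_1(K|e_j^{\perp})$ independently of $g$. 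Hence $\sum_i V_1(K_G|e_i^{\perp}) = \sum_i V_1(K|e_i^{\perp})$, $F(K_G) = F(K)$, and $K_G$ is $G$-invariant by construction.

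With $K$ now $G$-invariant, symmetry across each $e_i^{\perp}$ forces $K \cap e_i^{\perp} = K|e_i^{\perp}$: any $y \in K|e_i^{\perp}$ is the midpoint of a preimage $x \in K$ and its reflection across $e_i^{\perp}$, which also lies in $K$. Define $K^{\sharp} = \conv\bigl(\bigcup_{i=1}^n K \cap e_i^{\perp}\bigr) \subset K$. Then $K|e_i^{\perp} = K \cap e_i^{\perp} \subset K^{\sharp}$ gives $K|e_i^{\perp} \subset K^{\sharp}|e_i^{\perp} \subset K|e_i^{\perp}$, so $V_1(K^{\sharp}|e_i^{\perp}) = V_1(K|e_i^{\perp})$, while monotonicity of $V_1$ gives $V_1(K^{\sharp}) \leq V_1(K)$; hence $F(K^{\sharp}) \leq F(K)$. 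By construction $K^{\sharp}$ inherits the $G$-symmetries and equals the convex hull of its coordinate-hyperplane sections. Finally, by permutation symmetry $a := \max\{x_i : x \in K^{\sharp}\}$ is independent of $i$; rescaling by $1/a$ (permissible as $F$ is scale-invariant) gives $K^{\sharp} \subset Q^n$. Any $x^{\star} \in K^{\sharp}$ with $x_i^{\star} = 1$ averages, under the stabilizer of $e_i$ in $G$ (sign changes and permutations of the coordinates other than $i$), to the fixed vector $e_i$; convexity and central symmetry then yield $\pm e_j \in K^{\sharp}$ for each $j$, i.e., $C^n \subset K^{\sharp}$.

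The main technical point is the exact preservation of $\sum_i V_1(K|e_i^{\perp})$ under Minkowski averaging over $G$, which rests on the additivity of $V_1$ under Minkowski sums together with the transitive action of $G$ on the signed basis vectors; the remaining reductions are elementary consequences of monotonicity and the coordinate symmetries.
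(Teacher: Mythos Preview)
Your proof is correct and follows the same strategy as the paper: form the Minkowski average $K_G=\frac{1}{|G|}\sum_{g\in G}gK$ over the hyperoctahedral group to get $F(K_G)=F(K)$, then rescale and pass to the convex hull of the coordinate-hyperplane sections. The only noteworthy difference is in the verification that the denominator is preserved: the paper rewrites $\sum_i V_1(K|e_i^\perp)$ as a single mixed volume $c\,V(K,1;Q^n,1;B^n,n-2)$ and invokes $gQ^n=Q^n$, whereas you argue directly from the fact that $G$ permutes the signed basis vectors---both arguments are short and rest on the same linearity of $V_1$ in its first Minkowski argument.
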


\begin{proof}
The fact that $F$ attains its minimum in the class of compact convex sets in $\R^n$ follows from a standard compactness argument.

Let $G$ be the group of symmetries of $Q^n=\sum_{i=1}^n[-e_i,e_i]$ and let $|G|$ denote its cardinality. For every compact convex set $K$ in $\R^n$ and $g\in G$, let $gK$ be the image of $K$ under $g$.  Then the $G$-symmetral of $K$ is the set
\begin{equation}\label{Gsym}
K^G = \frac{1}{|G|}\sum_{g\in G}gK.
\end{equation}
Using the rigid motion covariance of Minkowski addition, we have, for every $h\in G$,
$$
hK^G=h\left(\frac{1}{|G|}\sum_{g\in G}gK\right)=\frac{1}{|G|}\sum_{g\in G}h(gK)=\frac{1}{|G|}\sum_{g\in G}(hg)K=\frac{1}{|G|}\sum_{g'\in G}g'K = K^G.
$$
It follows that $K^G$ has the symmetries of $Q^n$.

We claim that $F(K^G)=F(K)$.  To see this, note that by (\ref{Viform}) with $i=1$, we have
\begin{equation}\label{ff1}
V_1(K)=\frac{n}{\kappa_{n-1}}V\left(K,1;B^n,n-1\right).
\end{equation}
Also, by \cite[(A.43), p.~407]{Gar06} with $i=n-1$, $K_1=K$, $K_2=\cdots=K_{n-1}=B^n$, and $u_1=e_i$, we have
\begin{equation}\label{ff2}
V_1\left(K|e_i^{\perp}\right)=\frac{1}{n}V\left(K,1;[0,e_i],1;B^n,n-2\right).
\end{equation}
Summing (\ref{ff2}) over $i$, taking into account the multilinearity of mixed volumes (see, for example, \cite[(A.16), p.~399]{Gar06}), and using the resulting equation and (\ref{ff1}), we obtain
\begin{equation}\label{FunK}
    F(K)=c_1\frac{V\left(K,1;B^n,n-1\right)}{V\left(K,1;Q^n,1;B^n,n-2\right)},
\end{equation}
for some constant $c_1=c_1(n)$. For every $g\in G$, we have $gB^n=B^n$, so by the invariance of a mixed volume under a rigid motion of its arguments (see, for example, \cite[(A.17), p.~399]{Gar06}), we have
\begin{equation}\label{fkg1}
V\left(gK,1;B^n,n-1\right)=V\left(K,1;g^{-1}B^n,n-1\right)=V\left(K,1;B^n,n-1
\right).
\end{equation}
Similarly, using $gQ^n=Q^n$, we get
\begin{equation}\label{fkg2}
V\left(gK,1;Q^n,1;B^n,n-2\right)=V\left(K,1;Q^n,1;B^n,n-2\right).
\end{equation}
Substituting (\ref{fkg1}) and (\ref{fkg2}) into (\ref{FunK}), we obtain $F(gK)=F(K)$.
Finally, $F(K^G)=F(K)$ follows from this, the multilinearity of mixed volumes, and the definition (\ref{Gsym}) of $K^G$.  This proves the statement in the lemma regarding symmetries.

Suppose that $F$ attains its minimum at $K$. The function $F$ is invariant under dilatations, so if $K$ has the symmetries of $Q^n$, we can assume that it contains the points $\pm e_i$, $i=1,\dots,n$, in its boundary.  It is then clear from the symmetries of $K$ that $C^n\subset K\subset Q^n$.  Since $K$ must also be 1-unconditional, we have $K|e_i^{\perp}=K\cap e_i^{\perp}$ for $i=1,\dots,n$, and then clearly we may also assume that $K=\conv\{K\cap e_i^{\perp}: i=1,\dots,n\}$.
\end{proof}

\begin{lem}\label{leman}
Let $C^n\subset K\subset Q^n$ be a convex body in $\R^n$ with the symmetries of $Q^n$ and such that $K$ is the convex hull of its intersections with the coordinate hyperplanes.  Then
\begin{equation}\label{F(K)=x}
F(K)=\frac{2\kappa_{n-2}}{\kappa_{n-1}}\frac{\int\limits_0^
    {\frac{1}{\sqrt{n-1}}}
    \int\limits_{x_{n-1}}^{\sqrt{\frac{1-x_{n-1}^2}{n-2}}}\cdots
    \int\limits_{x_3}^{\sqrt{\frac{1-x_3^2-\cdots -x_{n-1}^2}{2}}}h_K(x_1, \dots, x_{n-1},0)p(x_{n-1})/x_1\,dx_2\cdots dx_{n-1}}
    {\int\limits_0^{\frac{1}{\sqrt{n-1}}}
    \int\limits_{x_{n-1}}^{\sqrt{\frac{1-x_{n-1}^2}{n-2}}}\cdots
    \int\limits_{x_3}^{\sqrt{\frac{1-x_3^2-\dots -x_{n-1}^2}{2}}}h_K(x_1, \dots, x_{n-1},0)/x_1 \,dx_2\cdots dx_{n-1}},
\end{equation}
where $x_1=\sqrt{1-x_2^2-\cdots-x_{n-1}^2}$ and $p$ is a nonnegative, increasing, continuous function.
\end{lem}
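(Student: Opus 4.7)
The plan is to express both $V_1(K)$ and the common value $V_1(K|e_i^\perp)$ (the same for every $i$ by the full symmetry of $K$) as integrals over one fundamental domain in $S^{n-2}\subset e_n^\perp$, and to identify explicitly the weight $p$ that appears when reducing $V_1(K)$.

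First, let $G$ denote the hyperoctahedral group of order $2^nn!$, under which $K$ and hence $h_K$ are invariant. Using (\ref{V1form2}), the integration over $S^{n-1}$ reduces to the fundamental domain $D=\{u\in S^{n-1}:u_1\ge\cdots\ge u_n\ge 0\}$, and analogously in $e_n^\perp$ one reduces to $D'=\{\omega\in S^{n-2}:\omega_1\ge\cdots\ge\omega_{n-1}\ge 0\}$, giving
$$V_1(K)=\frac{2^nn!}{\kappa_{n-1}}\int_D h_K(u)\,du,\qquad V_1(K|e_n^\perp)=\frac{2^{n-1}(n-1)!}{\kappa_{n-2}}\int_{D'}h_K(\omega,0)\,d\omega.$$
Parametrising $D'$ by $(x_2,\dots,x_{n-1})$ with $x_1=\sqrt{1-x_2^2-\cdots-x_{n-1}^2}$ (area element $dx_2\cdots dx_{n-1}/x_1$), the chain $x_1\ge\cdots\ge x_{n-1}\ge 0$ directly yields the iterated integral appearing in the denominator of (\ref{F(K)=x}).

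The geometric core is the evaluation of $h_K$ on $D$. Since $K=\conv\bigcup_i(K\cap e_i^\perp)$, one has $h_K(u)=\max_i h_{K\cap e_i^\perp}(u)$. Each section $K\cap e_i^\perp$ inherits from $K$ all the symmetries of a coordinate cube in $e_i^\perp$, and a short supremum argument (restricting to $x_j\ge 0$) shows that the support function of such a $1$-unconditional body is nondecreasing in each $|v_j|$. For $u\in D$, $h_{K\cap e_i^\perp}(u)$ depends only on the tuple $(u_j)_{j\neq i}$, and is therefore maximised when the smallest coordinate $u_n$ is the one dropped, i.e.\ at $i=n$. Hence, by positive homogeneity,
$$h_K(u)=h_{K\cap e_n^\perp}(u_1,\dots,u_{n-1},0)=r\,h_K(\omega,0),$$
where $\alpha=u_n$, $r=\sqrt{1-\alpha^2}$ and $\omega=(u_1/r,\dots,u_{n-1}/r)\in S^{n-2}$.

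Finally, write $u=(r\omega,\alpha)$, so the area element on $S^{n-1}$ decomposes as $(1-\alpha^2)^{(n-3)/2}\,d\alpha\,d\omega$ and the inequality $u_n\le u_{n-1}$ becomes $\alpha\le r\omega_{n-1}$. The substitution $s=\alpha/r$, i.e.\ $\alpha=s/\sqrt{1+s^2}$ and $d\alpha=(1+s^2)^{-3/2}\,ds$, turns the inner integral $\int_0^{r\omega_{n-1}}r(1-\alpha^2)^{(n-3)/2}\,d\alpha$ into
$$p(\omega_{n-1}):=\int_0^{\omega_{n-1}}(1+s^2)^{-(n+1)/2}\,ds,$$
which is continuous and strictly increasing (hence nonnegative) since its integrand is positive. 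Substituting this in and using $n\cdot 2^{n-1}(n-1)!=2^{n-1}n!$, the prefactor collapses to $2\kappa_{n-2}/\kappa_{n-1}$, yielding (\ref{F(K)=x}). The only delicate step is the monotonicity-plus-symmetry argument identifying $h_K$ on $D$; the remainder is a single careful change of variables.
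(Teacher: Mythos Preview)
Your proof is correct and follows essentially the same route as the paper's: reduce both $V_1(K)$ and $V_1(K|e_n^\perp)$ to integrals over the fundamental domain in $S^{n-2}$, identify $h_K(u)=h_K(u|e_n^\perp)$ on the chamber via the convex-hull-of-sections hypothesis together with coordinatewise monotonicity of the support function, and then integrate out the $e_n$-direction to produce the weight $p$. Your $p(t)=\int_0^t(1+s^2)^{-(n+1)/2}\,ds$ coincides with the paper's $p(t)=\int_{\pi/2-\arctan t}^{\pi/2}\sin^{n-1}\varphi\,d\varphi$ under $s=\cot\varphi$; the only cosmetic slip is writing the upper $\alpha$-limit as ``$r\omega_{n-1}$'' (which depends on $\alpha$), but your substitution $s=\alpha/r$ handles this implicitly and gives the correct bound $s\le\omega_{n-1}$.
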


\begin{proof}
Since the assumptions on $K$ force it to be 1-unconditional, we have $K|e_i^{\perp}=K\cap e_i^{\perp}$, $i=1,\dots,n$. Hence, $K$ is the convex hull of its projections on the coordinate hyperplanes, from which we obtain
\begin{equation}\label{SupportFunctionConvexHull}
h_K(x)=\max_{1\leq i \leq
n} h_{K|e_i^\perp}(x)=\max_{1\leq i \leq n} h_{K}(x|e_i^\perp),
\end{equation}
for all $x\in \R^n$.

In view of the symmetries of $K$, $V_1\left(K|e_i^\perp\right)$ is the same for $i=1,\dots,n$, so identifying $e_i^{\perp}$ with $\R^{n-1}$ and using first (\ref{V1form2}) with $n$ replaced by $n-1$ and then the symmetries of $K$ again, we have
\begin{equation}\label{viperp}
\sum_{i=1}^n V_1(K|e_i^\perp) =
\frac{n}{\kappa_{n-2}}\int_{S^{n-1}\cap e_n^\perp} h_K(u)\, du =
\frac{n!\, 2^{n-1}}{\kappa_{n-2}}\int_{\Omega\cap e_n^\perp}h_K(u)\, du,
\end{equation}
where
\begin{equation}\label{omdef}
\Omega=S^{n-1}\cap\{(x_1,\dots,x_n)\in \R^n: x_1\geq x_2\geq \dots \geq x_{n} \geq 0\}.
\end{equation}
By (\ref{V1form2}) and (\ref{SupportFunctionConvexHull}), we obtain
\begin{equation}\label{gg1}
V_1(K)=\frac{1}{\kappa_{n-1}}\int_{S^{n-1}}\max_{1\leq i\leq n} h_K(u|e_i^\perp)\, du
=\frac{n!\, 2^n}{\kappa_{n-1}}\int_{\Omega}h_K(u|e_n^\perp)\,du.
\end{equation}
If $u\in \Omega$, then $u=\sin\varphi \,v+ \cos\varphi \,e_n$ for some $v\in \Omega\cap e_n^\perp$ and $0\le \varphi\le \pi/2$.  Then by (\ref{omdef}), we have
$$\sin\varphi(v\cdot e_{n-1})=u\cdot e_{n-1}=u_{n-1}\geq u_n=\cos\varphi\ge 0,$$
from which it follows that $\varphi\in [\pi/2-\arctan(v\cdot e_{n-1}),\pi/2]$. Since $h_K$ is positively homogeneous of degree 1, (\ref{gg1}) becomes
\begin{align}\label{v1ff}
V_1(K)&= \frac{n!\, 2^n}{\kappa_{n-1}}\int_
{\Omega\cap e_n^\perp}\int_{\pi/2-\arctan(v\cdot e_{n-1})}^{\pi/2}  h_K(\sin\varphi \, v)\sin^{n-2} \varphi \, d\varphi\, dv\nonumber \\
&= \frac{n!\, 2^n}{\kappa_{n-1}}\int_{\Omega\cap e_n^\perp} h_K(v)\int_{\pi/2-\arctan(v\cdot e_{n-1})}^{\pi/2} \sin^{n-1}\varphi \,d\varphi\,dv \nonumber\\
&= \frac{n!\, 2^n}{\kappa_{n-1}}\int_{\Omega\cap e_n^\perp} h_K(v)p(v\cdot e_{n-1})\, dv,
\end{align}
where
$$
p(t)=\int_{\pi/2-\arctan t}^{\pi/2} \sin^{n-1}\varphi \, d\varphi,
$$
a nonnegative, increasing, continuous function of $t$. Substituting (\ref{viperp}) and (\ref{v1ff}) into (\ref{BMF}), we obtain
\begin{equation}\label{F(K)=u}
F(K)=\frac{2\kappa_{n-2}}{\kappa_{n-1}}\frac{\int_{\Omega\cap e_n^{\perp}} h_K(u) p(u\cdot e_{n-1})\,du}{\int_{\Omega\cap e_n^{\perp}} h_K(u)\,du}.
\end{equation}
We rewrite the integrals in (\ref{F(K)=u}) as integrals over the graph of the function
\begin{equation}\label{x1eq}
x_1=f(x_2,\dots,x_{n-1})=\sqrt{1-x_2^2-\cdots-x_{n-1}^2}.
\end{equation}
The required formula for a surface integral is given explicitly in \cite[Equation (24)]{Ben99}.  Here, the Jacobian is $\sqrt{1+|\nabla f|^2}=1/x_1$.
Let $\Omega_1$ be the projection of $\Omega\cap e_n^{\perp}$ onto $e_1^{\perp}$.  Then $\Omega_1$ is determined by the inequalities
\begin{eqnarray}\label{inarray}
  0\leq &x_{n-1}&\leq \frac{1}{\sqrt{n-1}},\\
  x_{n-1}\leq &x_{n-2}&\leq \sqrt{\frac{1-x_{n-1}^2}{n-2}},\nonumber\\
  x_{n-2}\leq &x_{n-3}&\leq \sqrt{\frac{1-x_{n-2}^2-x_{n-1}^2}{n-3}},\nonumber\\
   & \vdots & \nonumber\\
  x_3\leq &x_2 &\leq \sqrt{\frac{1-x_3^2-x_4^2-\dots -x_{n-1}^2}{2}}.\nonumber
\end{eqnarray}
The equation (\ref{F(K)=x}) results immediately.
\end{proof}

The following result is an inequality of the Chebyshev type; see, for example, \cite[Theorem~236, p.~168]{HLP}.

\begin{lem}\label{Cheblem}
Let $f:[a,b]\to \R$ be continuous and with zero average over $[a,b]$, and suppose that there exists a $c\in [a,b]$ such that $f\le 0$ on $[a,c]$ and $f\ge 0$ on $[c,b]$.  If $g:[a,b]\to \R$ is nonnegative, increasing, and continuous, then $\int_a^bf(t)g(t)\,dt\ge 0$.
\end{lem}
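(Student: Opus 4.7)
The plan is to exploit the zero-average hypothesis on $f$ to subtract a constant from $g$ without changing the value of the integral, choosing that constant to be $g(c)$ so that the sign of $g(t)-g(c)$ tracks the sign of $f(t)$.

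Concretely, since $\int_a^b f(t)\,dt=0$, I would write
\[
\int_a^b f(t)g(t)\,dt=\int_a^b f(t)\bigl(g(t)-g(c)\bigr)\,dt.
\]
Then I would argue pointwise nonnegativity of the new integrand by splitting $[a,b]$ at $c$. For $t\in[a,c]$, the hypothesis gives $f(t)\le 0$, and since $g$ is increasing we also have $g(t)-g(c)\le 0$, so the product $f(t)(g(t)-g(c))$ is $\ge 0$. For $t\in[c,b]$, $f(t)\ge 0$ and $g(t)-g(c)\ge 0$, so again the product is $\ge 0$. Integrating the nonnegative function yields $\int_a^b f(t)(g(t)-g(c))\,dt\ge 0$, which is exactly the desired inequality.

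There is essentially no obstacle: nonnegativity of $g$ is not even needed (only monotonicity is used), and continuity is only invoked to guarantee that the integrals exist and that the sign conditions defining the splitting point $c$ are meaningful. The trick of subtracting $g(c)$ — which is the standard device behind Chebyshev's sum/integral inequality — reduces the statement to a trivial sign comparison, so the argument should occupy only a few lines.
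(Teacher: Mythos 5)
Your proof is correct and uses essentially the same idea as the paper: both compare $g$ to the constant $g(c)$ and exploit that the sign of $g-g(c)$ matches that of $f$ on each side of $c$; the paper phrases this as the chain $\int_a^c fg \ge g(c)\int_a^c f$, $\int_c^b fg \ge g(c)\int_c^b f$, while you phrase it as pointwise nonnegativity of $f\cdot(g-g(c))$. Your side remark that nonnegativity of $g$ is not actually needed is also accurate (the paper states it but never uses it).
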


\begin{proof}
Since $g$ is nonnegative and increasing on $[a,b]$, we have $0\le g(t)\le g(c)$ on $[a,c]$ and $g(t)\ge g(c)\ge 0$ on $[c,b]$.  The assumptions on $f$ and $g$ imply that
\begin{eqnarray*}
\int_a^bf(t)g(t)\,dt&=&\int_a^cf(t)g(t)\,dt+\int_c^bf(t)g(t)\,dt\\
&\ge &g(c)\int_a^cf(t)\,dt+g(c)\int_c^bf(t)\,dt=g(c)\int_a^bf(t)\,dt=0.
\end{eqnarray*}
\end{proof}

Recall that $\Omega_1$ is the region defined by (\ref{inarray}) and define $S(t)=\Omega_1\cap\{x\in\R^n: x\cdot e_{n-1}=t\}$.  For $n\ge 4$, define
\begin{equation}\label{jj}
J_K(x_{n-1})=\begin{cases}
\frac{\int_{S(x_{n-1})}h_K(x_1,\dots,x_{n-1},0)/x_1\,dx_2\cdots dx_{n-2}}{\int_{S(x_{n-1})}\,dx_2\cdots dx_{n-2}},& {\text{if $0\le x_{n-1}<1/\sqrt{n-1}$,}}\\[1ex]
h_K(1,\dots,1,0),& {\text{if $x_{n-1}=1/\sqrt{n-1}$}},
\end{cases}
\end{equation}
where $x_1$ is defined by \eqref{x1eq}.  For $n=3$, let
\begin{equation}\label{jj3def}
J_K(x_2)=h_K(x_1,x_2,0)/x_1,
\end{equation}
for $0\le x_2\le 1/\sqrt{2}$.  The function $J_K$ is continuous on $[0,1/\sqrt{n-1}]$.  To see this, note that for $0\leq x_{n-1} < 1/\sqrt{n-1}$, $J_K(x_{n-1})$ is the average of $h_K/x_1$ over $S(x_{n-1})$, while the value of $h_K/x_1$ at the singleton $S(1/\sqrt{n-1})=\{(1/\sqrt{n-1},\dots, 1/\sqrt{n-1},0)\}$ is $h_K(1,\dots,1,0)$.

\begin{lem}\label{lemdiff}
Let $K$ be as in Lemma~\ref{leman}.  If the function $J_K$ be defined by \eqref{jj} and \eqref{jj3def} is increasing, then $F(K)\ge F(C^n)$, with equality if and only if $K=C^n$.
\end{lem}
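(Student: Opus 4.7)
The plan is to slice the integrals in (\ref{F(K)=x}) along level sets of $x_{n-1}$, rewrite $F(K)$ as a one-dimensional ratio involving $J_K$, and then compare with $F(C^n)$ via the Chebyshev-type Lemma~\ref{Cheblem}.

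For $n\ge 4$, set $A(t)=\cH^{n-3}(S(t))$, and for $n=3$ set $A\equiv 1$. Grouping the integrands in the numerator and denominator of (\ref{F(K)=x}) by the last variable $x_{n-1}=t$ and invoking the definitions (\ref{jj})--(\ref{jj3def}) of $J_K$, Fubini's theorem yields
\begin{equation*}
F(K)=\frac{2\kappa_{n-2}}{\kappa_{n-1}}\cdot\frac{\int_0^{1/\sqrt{n-1}}p(t)\,J_K(t)\,A(t)\,dt}{\int_0^{1/\sqrt{n-1}}J_K(t)\,A(t)\,dt}.
\end{equation*}
When $K=C^n$, the inequality $x_1\ge x_i$ for all $i$ on the fundamental domain $\Omega\cap e_n^\perp$ gives $h_{C^n}(x_1,\dots,x_{n-1},0)=x_1$, so $J_{C^n}\equiv 1$ and the displayed ratio collapses to
\begin{equation*}
F(C^n)=\frac{2\kappa_{n-2}}{\kappa_{n-1}}\cdot\frac{\int_0^{1/\sqrt{n-1}}p(t)\,A(t)\,dt}{\int_0^{1/\sqrt{n-1}}A(t)\,dt}.
\end{equation*}

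Let $\bar{J}=\int J_K A\,dt/\int A\,dt$ denote the $A$-weighted average of $J_K$ on $[0,1/\sqrt{n-1}]$, and define $f(t)=(J_K(t)-\bar{J})A(t)$. By construction $\int_0^{1/\sqrt{n-1}}f\,dt=0$; and since $J_K$ is increasing and $A\ge 0$, there is a crossover point $c\in[0,1/\sqrt{n-1}]$ with $f\le 0$ on $[0,c]$ and $f\ge 0$ on $[c,1/\sqrt{n-1}]$. Because $p$ is nonnegative, continuous, and increasing, Lemma~\ref{Cheblem} applied with $g=p$ yields $\int pf\,dt\ge 0$. Cross-multiplying the expressions for $F(K)$ and $F(C^n)$ displayed above, this is precisely the desired inequality $F(K)\ge F(C^n)$.

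For the equality case, the strict monotonicity of $p$ (which is straightforward to check from the definition of $p$ in Lemma~\ref{leman}) forces $f\equiv 0$ almost everywhere, so $J_K\equiv\bar{J}$ on $\{A>0\}$; continuity of $J_K$ then extends this to all of $[0,1/\sqrt{n-1}]$. Consequently $h_K(x_1,\dots,x_{n-1},0)=\bar{J}\,x_1$ throughout $\Omega\cap e_n^\perp$. Combining this with the hypotheses of Lemma~\ref{leman} --- unconditionality, the symmetries of $Q^n$, the sandwich $C^n\subset K\subset Q^n$, and $K=\conv\{K\cap e_i^\perp:i=1,\dots,n\}$ --- one deduces $\bar{J}=1$ and $K=C^n$. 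The main obstacle is this last step: upgrading a linear identity for $h_K$ on a single fundamental domain into the global equality $K=C^n$ through the symmetry group and the convex-hull hypothesis.
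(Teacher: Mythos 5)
Your proof of the inequality $F(K)\ge F(C^n)$ is correct and is essentially the paper's argument, merely repackaged. The paper works with
$$
G_K(x_{n-1})=\int_{S(x_{n-1})}\left(\frac{h_K/x_1}{\int_{\Omega_1}h_K/x_1\,dx}-\frac{1}{\int_{\Omega_1}dx}\right)dx_2\cdots dx_{n-2},
$$
while you work with $f=(J_K-\bar J)A$. Since $\int_{S(t)}h_K/x_1 = J_K(t)A(t)$, one has $f = G_K\cdot\int_{\Omega_1}h_K/x_1\,dx$, so the two differ only by a positive constant; the appeal to Lemma~\ref{Cheblem} is the same.

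The equality case, however, is incomplete. First, the step from $J_K\equiv\bar J$ to $h_K(x_1,\dots,x_{n-1},0)=\bar J\,x_1$ throughout $\Omega\cap e_n^\perp$ is not automatic when $n\ge 4$: in that range $J_K(t)$ is an \emph{average} of $h_K/x_1$ over $S(t)$, and constancy of an average does not by itself yield pointwise constancy of the integrand. The paper bridges exactly this gap by exploiting the boundary value in the definition \eqref{jj}: $J_K(1/\sqrt{n-1})=h_K(1,\dots,1,0)$ is a \emph{pointwise} value, and the monotonicity of $h_K$ in each coordinate (which comes from the $Q^n$-symmetry of $K$) gives $h_K(x_1,\dots,x_{n-1},0)/x_1\le h_K(1,\dots,1,0)$ on all of $\Omega_1$. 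Since the average over $S(0)$ equals this upper bound $\bar J$, one gets pointwise equality on $S(0)$, whence in particular $h_K(1,0,\dots,0)=h_K(1,\dots,1,0)$, i.e.\ $\bar J=1$. Without this observation, your ``Consequently'' is unjustified. Second, you explicitly leave the final deduction $K=C^n$ as ``the main obstacle,'' so the proof of the equality statement is not actually completed. (In fact, once $h_K(1,\dots,1,0)=1$ is in hand, the deduction is short: the $Q^n$-symmetry of $K$ together with $C^n\subset K$ forces $K\cap e_i^\perp=C^n\cap e_i^\perp$ for each $i$, and then $K=\conv\{K\cap e_i^\perp:i=1,\dots,n\}=C^n$.)
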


\begin{proof}
Note firstly that $h_{C^n}(x)=x_1$ for all $x=(x_1,\dots,x_n)$ such that $0=x_n\le x_{n-1}\le \cdots\le x_1\le 1$ and hence $h_{C^n}(x)=x_1$ on $\Omega_1$. For $n\ge 4$, let
\begin{align*}
\lefteqn{G_K(x_{n-1})=\int_{S(x_{n-1})}\left(\frac{h_K(x_1,\dots,x_{n-1},0)/x_1}
{\int_{\Omega_1}
h_K(x_1,\dots,x_{n-1},0)/x_1\,dx_2\cdots dx_{n-1}}-\right.}\\
& &\left.\frac{1}{\int_{\Omega_1}
\,dx_2\cdots dx_{n-1}}\right)\,dx_2\cdots dx_{n-2}.
\end{align*}
For $n=3$, let
$$G_K(x_2)=\frac{h_K(x_1,x_2,0)/x_1}{\int_{\Omega_1}
h_K(x_1,x_2,0)/x_1\,dx_2}-\frac{1}{\int_{\Omega_1}
\,dx_2}.$$
Suppose that $n\ge 3$.  In view of (\ref{F(K)=x}), the inequality $F(K)\ge F(C^n)$ is equivalent to
\begin{equation}\label{intppp}
\int_0^{1/\sqrt{n-1}}
G_K(x_{n-1})p(x_{n-1})\,dx_{n-1}\ge 0.
\end{equation}
From (\ref{jj}) and the definition of $G_K$, we see that $G_K(x_{n-1})\le 0$ or $G_K(x_{n-1})\ge 0$ according as $J_K(x_{n-1})\le I$ or $J_K(x_{n-1})\ge I$, respectively, where
\begin{equation}\label{III}
I=\frac{\int_{\Omega_1}
h_K(x_1,\dots,x_{n-1},0)/x_1\,dx_2\cdots dx_{n-1}}{\int_{\Omega_1}
\,dx_2\cdots dx_{n-1}}.
\end{equation}
Now $G_K$ is continuous and its definition implies that its average over $[0,1/\sqrt{n-1}]$ is zero. Therefore if $J_K$ is increasing, then for some $0\le t_0\le 1/\sqrt{n-1}$, we have $G_K\le 0$ on $[0,t_0]$ and $G_K\ge 0$ on $[t_0,1/\sqrt{n-1}]$.  Since $p$ is nonnegative, increasing, and continuous, (\ref{intppp}) follows from Lemma~\ref{Cheblem} with $f$ and $g$ replaced by $G_K$ and $p$, respectively.

If $F(K)=F(C^n)$, then by (\ref{F(K)=x}),
$$\int_0^{1/\sqrt{n-1}}
G_K(x_{n-1})p(x_{n-1})\,dx_{n-1}= 0.$$
Since $G_K\leq 0$ on $[0,t_0]$, $G_K\geq 0$ on $[t_0,1/\sqrt{n-1}]$, and $G_K$ has
zero average over $[0,1/\sqrt{n-1}]$, the fact that $p$ is
nonnegative and strictly increasing implies that the inequality in (\ref{intppp}) is strict, yielding a contradiction unless $G_K$ vanishes on $[0,1/\sqrt{n-1}]$.  It follows that $J_K(x_{n-1})=I$, for $0\le x_{n-1}\le 1/\sqrt{n-1}$, where $I$ is as in (\ref{III}). In particular,
$J_K(0)=J_K(1/\sqrt{n-1})=h_K(1,\dots,1,0)$. Now $K$ has the same symmetries as $Q^n$, so for each $i\in \{1,\dots,n-1\}$, $h_K$ is convex and even as a function of
$x_i$ and hence increases with $x_i\in [0,1]$. Therefore
$h_K(x_1, ..., x_{n-1}, 0) / x_1 \leq h_K(x_1, x_1,..., x_1, 0) / x_1
= h_K(1, 1,..., 1, 0)$.
Since $J_K(0)$ is the average of $h_K/x_1$ on $S(0)$,  it
follows that $h_K/x_1$ coincides with $h_K(1,\dots, 1, 0)$ on
$S(0)$. In particular, $h_K(1,\dots,1,0)=h_K(1,0,\dots,0)=1$. This
means that the face of $C^n$ orthogonal to $(1,\dots,1,0)$ supports
$K$. The assumptions on $K$ inherited from those in Lemma~\ref{leman} imply that $K=C^n$.
\end{proof}

\begin{lem}\label{lembm3}
Let $K$ be as in Lemma~\ref{leman}.  Then the function $J_K$ defined by \eqref{jj} is increasing.
\end{lem}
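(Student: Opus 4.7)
The plan is to exploit the positive homogeneity of $h_K$ together with the 1-unconditional symmetry of $K$ inherited from the hypothesis that $K$ has the symmetries of $Q^n$. After homogenizing, $h_K(x_1,\dots,x_{n-1},0)/x_1 = h_K(1,x_2/x_1,\dots,x_{n-1}/x_1,0)$, and since $h_K$ is convex and even in each argument, it is nondecreasing in the absolute value of each argument; in particular it is nondecreasing in each $x_i/x_1$ on $[0,\infty)$.

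For $n=3$, this reduces the assertion immediately. By \eqref{jj3def},
$$J_K(x_2)=\frac{h_K(\sqrt{1-x_2^2},x_2,0)}{\sqrt{1-x_2^2}}=h_K\!\left(1,\frac{x_2}{\sqrt{1-x_2^2}},0\right).$$
Since $x_2\mapsto x_2/\sqrt{1-x_2^2}$ is strictly increasing on $[0,1/\sqrt{2}]$ and $h_K(1,\cdot,0)$ is nondecreasing on $[0,\infty)$, the function $J_K$ is increasing.

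For $n\ge 4$, the plan is to perform the normalizing substitution $u_i=x_i/\sqrt{1-x_{n-1}^2}$ for $i=1,\dots,n-2$, which puts $(u_1,\dots,u_{n-2})$ on the unit sphere of $\R^{n-2}$. Setting $\beta=x_{n-1}/\sqrt{1-x_{n-1}^2}$, the integrand $h_K(x_1,\dots,x_{n-1},0)/x_1$ transforms, by positive homogeneity, into $h_K(u_1,\dots,u_{n-2},\beta,0)/u_1$, while the common Jacobian factor $(1-x_{n-1}^2)^{(n-3)/2}$ cancels between numerator and denominator in \eqref{jj}. The rescaled cross-section $D(\beta)$ is parametrized by $(u_2,\dots,u_{n-2})$ with $u_1\ge u_2\ge\cdots\ge u_{n-2}\ge \beta$ and $u_1=\sqrt{1-u_2^2-\cdots-u_{n-2}^2}$, and it suffices to show that
$$\beta\mapsto\frac{\int_{D(\beta)} h_K(u_1,\dots,u_{n-2},\beta,0)/u_1\,du_2\cdots du_{n-2}}{\int_{D(\beta)} du_2\cdots du_{n-2}}$$
is monotone increasing on $[0,\infty)$, since $\beta$ is an increasing function of $x_{n-1}$ on $[0,1/\sqrt{n-1}]$.

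The principal obstacle, responsible for the technical proof being deferred to the appendix, is that while the integrand is pointwise nondecreasing in $\beta$ (by the unconditional symmetry of $K$), the domain $D(\beta)$ simultaneously shrinks. One must show that the pointwise gain in the integrand dominates the loss from restriction. I would attempt this by differentiating in $\beta$, splitting $J_K'(\beta)$ into a bulk term (manifestly nonnegative, arising from the $\beta$-dependence of $h_K$) and a boundary term coming from the face $\{u_{n-2}=\beta\}$ of $D(\beta)$, and then using the convexity and even symmetry of $h_K$ in each coordinate to compare the integrand on that boundary face with its average over $D(\beta)$. The symmetry hypotheses $C^n\subset K\subset Q^n$, which bound $h_K$ from above and below in a way compatible with the monotonicity we seek, should be central to controlling the boundary contribution.
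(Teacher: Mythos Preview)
Your treatment of $n=3$ is correct and matches the paper's argument exactly.

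For $n\ge 4$, however, the proposal is not a proof but a sketch that stops precisely at the hard step. You correctly reduce the problem (via the rescaling $u_i=x_i/\sqrt{1-x_{n-1}^2}$) to showing that an average of $h_K(u_1,\dots,u_{n-2},\beta,0)/u_1$ over a shrinking simplex-like region $D(\beta)$ is increasing in $\beta$, and you correctly identify that after differentiating in $\beta$ one obtains a nonnegative bulk term plus a boundary term, the sign of which amounts to
\[
\text{average of }\frac{h_K}{u_1}\text{ over }D(\beta)\ \ge\ \text{average of }\frac{h_K}{u_1}\text{ over the face }\{u_{n-2}=\beta\}.
\]
But you do not prove this inequality; you only say that the symmetries ``should be central to controlling'' it. This is exactly the content of the paper's appendix and is far from immediate. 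Monotonicity of $h_K$ in each coordinate tells you only that the integrand on the face $\{u_{n-2}=\beta\}$ is pointwise dominated by the integrand at points with the \emph{same} $(u_1,\dots,u_{n-3})$ and larger $u_{n-2}$; it does \emph{not} give a comparison of \emph{averages}, because the boundary face and the bulk are weighted differently and have different geometries in the remaining variables. The containments $C^n\subset K\subset Q^n$ merely normalize $K$ and do not by themselves control the boundary contribution.

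For comparison, the paper's appendix uses a related but distinct change of variables ($y_i=x_i/x_1$, sending the integrand to $h_K(1,y_2,\dots,y_{n-2},z,0)$), then differentiates the resulting quotient in $y_{n-1}$. The cases $n=4$ and $n=5$ are handled by direct computation and a Chebyshev-type argument (Lemma~\ref{Cheblem}). For $n\ge 6$ the paper performs two further changes of variable and shows that the sign of the boundary contribution is governed by the monotonicity of a ratio $Z_K/Z_{C^n}$, which is then recognized as $J_L$ for an $(n-3)$-dimensional body $L$ with the symmetries of $Q^{n-3}$; this closes an induction on dimension. None of these ideas---the base cases, the Chebyshev comparison, or the dimensional induction---appear in your proposal, and some device of this kind is needed to turn your outline into a proof.
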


\begin{proof}
As we observed in the proof of the previous lemma, the fact that $K$ has the same symmetries as $Q^n$ implies that for each $i\in \{1,\dots,n-1\}$, $h_K$ is convex and even as a function of $x_i$ and hence increases with $x_i\in [0,1]$.

Let $n=3$.  By Lemma~\ref{lemdiff}, it suffices to show that $J_K(x_2)=h_K(x_1,x_2,0)/x_1$
is increasing for $x_2\in [0,1/\sqrt{2}]$, where $h_K(x_1,x_2,0)/x_1=h_K(1,x_2/\sqrt{1-x_2^2},0)$. This is true because $h_K$ is an increasing function of its
second argument in $[0,1]$ and $x_2/\sqrt{1-x_2^2}$ is increasing for $x_2\in [0,1)$.

The case $n\ge 4$ is proved in the appendix.
\end{proof}

\noindent{\em Proof of Theorem~\ref{BMproof}}.
Let $K$ be a compact convex set in $\R^n$ with $\dim K\ge 1$.  By Lemma~\ref{lemreduct}, we may assume that $K$ satisfies the hypotheses of Lemma~\ref{leman}.  Then Lemmas~\ref{lemdiff} and~\ref{lembm3} imply that $F(K)\ge F(C^n)$, with equality if and only if $K=C^n$.
\qed

\begin{cor}\label{corBMconj}
Let $n\ge 3$.  If $K$ is a compact convex set in $\R^n$, then there is a constant $c_0=c_0(n)$ such that
\begin{equation}\label{BetkeMcMullenV1}
V_{1}(K) \geq c_0\sum_{i=1}^nV_{1}(K|e_i^\perp)
\geq c_0\sum_{i=1}^nV_{1}(K\cap e_i^\perp),
\end{equation}
with equality in either inequality involving $V_{1}(K)$ if and only if either $\dim K=0$ or $\dim K=n$ and $K$ is an $o$-symmetric regular coordinate cross-polytope (or one of its translates, in the case of the left-hand inequality).
\end{cor}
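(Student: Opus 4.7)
The plan is to extract the corollary as a direct packaging of Theorem~\ref{BMproof} together with the standard monotonicity of $V_1$, setting $c_0=c_0(n)=F(C^n)$, the (strictly positive) minimum value of $F$ on compact convex sets of positive dimension in $\R^n$.

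First, for the two inequalities: if $\dim K=0$, all three quantities in (\ref{BetkeMcMullenV1}) vanish, while if $\dim K\ge 1$, Theorem~\ref{BMproof} asserts $F(K)\ge F(C^n)=c_0$, which is the left-hand inequality after clearing denominators. The right-hand inequality is immediate from the inclusion $K\cap e_i^\perp\subseteq K|e_i^\perp$ and the monotonicity of the intrinsic volume $V_1$ on compact convex sets.

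Next, for the equality cases: if equality holds in the left-hand inequality, then either $\dim K=0$ or (when $\dim K\ge 1$) $F(K)=F(C^n)$, which by Theorem~\ref{BMproof} forces $K$ to be a regular coordinate cross-polytope; since $V_1(K)$ and each $V_1(K|e_i^\perp)$ are translation invariant, so is $F$, and hence any translate of a regular coordinate cross-polytope realizes equality, as the corollary states. Recall from the preliminaries that a regular cross-polytope in $\R^n$ has dimension $n$ by definition, matching the dimension condition. For equality in the chain involving $V_1(K)$ on the right, we additionally need $V_1(K|e_i^\perp)=V_1(K\cap e_i^\perp)$ for every $i$. Since $V_1$ is strictly monotone under proper inclusion of compact convex sets---if $A\subsetneq B$, then $h_A<h_B$ on a nonempty open subset of the relevant unit sphere, and (\ref{V1form2}) then yields $V_1(A)<V_1(B)$---this forces $K\cap e_i^\perp=K|e_i^\perp$ for every $i$. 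Writing $K=\lambda C^n+t$ with $\lambda>0$, translation invariance and the $1$-homogeneity of $V_1$ give $V_1(K|e_i^\perp)=\lambda V_1(C^n\cap e_i^\perp)$, while from the $\ell^1$-ball description $\lambda C^n+t=\{x:\sum_j|x_j-t_j|\le\lambda\}$ one reads off that $K\cap e_i^\perp$ is empty (or a single point) when $|t_i|\ge\lambda$, and otherwise is a translate of $(\lambda-|t_i|)(C^n\cap e_i^\perp)$, so $V_1(K\cap e_i^\perp)=(\lambda-|t_i|)V_1(C^n\cap e_i^\perp)$. Equality for every $i$ then forces $t_i=0$ for all $i$, so $K$ is $o$-symmetric.

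Relative to the already-established Theorem~\ref{BMproof}, the main obstacle is purely the bookkeeping in the equality analysis: one must observe that translating a cross-polytope shrinks its coordinate sections linearly in the translation while leaving the $V_1$-measure of its coordinate projections unchanged. Everything else is a direct rephrasing of Theorem~\ref{BMproof} combined with the strict monotonicity of $V_1$ under proper inclusion, itself an immediate consequence of (\ref{V1form2}).
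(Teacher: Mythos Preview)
Your proof is correct and follows essentially the same approach as the paper: derive the left-hand inequality and its equality condition directly from Theorem~\ref{BMproof} (with $c_0=F(C^n)$), and obtain the right-hand inequality from $K\cap e_i^\perp\subset K|e_i^\perp$ and monotonicity of $V_1$. The paper's own proof simply declares that the equality condition for the right-hand inequality ``follows trivially''; your explicit computation showing that for $K=\lambda C^n+t$ one has $V_1(K|e_i^\perp)=\lambda\,V_1(C^n\cap e_i^\perp)$ while $V_1(K\cap e_i^\perp)=(\lambda-|t_i|)\,V_1(C^n\cap e_i^\perp)$, forcing $t=0$, is a welcome elaboration of that step rather than a different method.
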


\begin{proof}
Theorem~\ref{BMproof} gives the left-hand inequality in (\ref{BetkeMcMullenV1}) and its equality condition.  The right-hand inequality involving $V_{1}(K)$ and its equality condition follow trivially since $V_{1}(K|e_i^\perp)\ge V_{1}(K\cap e_i^\perp)$ for $i=1,\dots,n$.
\end{proof}

With modified equality conditions, the previous corollary also holds when $n=2$, by Corollary~\ref{BetkeMcMullen}.

\section{The case $m<n-1$: Results for zonoids}\label{zonoids}

\begin{thm}\label{Zonoidm}
Let $K$ be a zonoid in $\R^n$ and let $m\in\{1,\dots,n-2\}$.  Then
\begin{equation}\label{eq1}
V_m(K)^2\geq \frac{1}{n-m}\sum_{i=1}^nV_m(K|e_i^{\perp})^2.
\end{equation}
\end{thm}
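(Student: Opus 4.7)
The plan is to reduce to the case of zonotopes by Hausdorff approximation, then combine the classical formula expressing the $m$th intrinsic volume of a zonotope as a sum of $m$-dimensional parallelepiped volumes with the generalized Pythagorean identity of Proposition~\ref{CBm} and Minkowski's integral inequality (\ref{Minkint}) with $p=2$. The right-hand side of (\ref{eq1}) is continuous in the Hausdorff metric (intrinsic volumes are, and so are projections onto fixed subspaces), so it suffices to prove the inequality for an arbitrary zonotope $Z=\sum_{j=1}^N [0,v_j]$ in $\R^n$.

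For such a $Z$, the standard zonotope formula (see, e.g., Shephard or via Cauchy--Binet applied to the expansion of the volume polynomial) gives
\[
V_m(Z)=\sum_{1\le j_1<\cdots<j_m\le N}\|v_{j_1}\wedge\cdots\wedge v_{j_m}\|,
\]
and since $Z|e_i^{\perp}=\sum_{j=1}^N[0,v_j|e_i^{\perp}]$ is itself a zonotope in $e_i^{\perp}$,
\[
V_m(Z|e_i^{\perp})=\sum_{1\le j_1<\cdots<j_m\le N}\|(v_{j_1}|e_i^{\perp})\wedge\cdots\wedge(v_{j_m}|e_i^{\perp})\|.
\]
For each fixed index set $I=\{j_1,\dots,j_m\}$, apply Proposition~\ref{CBm} to the $m$-dimensional parallelepiped spanned by $v_{j_1},\dots,v_{j_m}$ to obtain the pointwise identity
\[
(n-m)\,\|v_{j_1}\wedge\cdots\wedge v_{j_m}\|^2=\sum_{i=1}^n\|(v_{j_1}|e_i^{\perp})\wedge\cdots\wedge(v_{j_m}|e_i^{\perp})\|^2.
\]

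Now invoke Minkowski's integral inequality (\ref{Minkint}) with $p=2$, with $X$ the set of $m$-subsets of $\{1,\dots,N\}$ equipped with counting measure, $k=n$, and $f_i(I)=\|(v_{j_1}|e_i^{\perp})\wedge\cdots\wedge(v_{j_m}|e_i^{\perp})\|$ for $I=\{j_1,\dots,j_m\}$. By the pointwise identity above, the left-hand side of (\ref{Minkint}) equals $\sqrt{n-m}\,V_m(Z)$, while the right-hand side equals $\bigl(\sum_{i=1}^n V_m(Z|e_i^{\perp})^2\bigr)^{1/2}$. Squaring and dividing by $n-m$ yields (\ref{eq1}) for $Z$.

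Passing to a general zonoid $K$ is then routine via a sequence of zonotopes $Z_k\to K$ in the Hausdorff metric. Alternatively, one can work directly from the integral representation $V_m(K)=c_{n,m}\int\|v_1\wedge\cdots\wedge v_m\|\,d\mu^{\otimes m}(v_1,\dots,v_m)$ in terms of the generating measure $\mu$ of $K$ and apply (\ref{Minkint}) to this integral, avoiding any approximation step. The main obstacle is essentially bookkeeping: recognizing that the pointwise Pythagorean identity for $m$-parallelepipeds is precisely what turns the $\ell^2$-combination on the right-hand side of Minkowski's inequality into the single $m$-volume on the left; once one spots this, the argument is very short. The equality analysis (which (\ref{eq1}) does not require but which the later discussion in Section~\ref{zonoids} is concerned with) will hinge on the essential-proportionality condition in Minkowski's inequality, and is less straightforward to state cleanly for $m$ strictly between $1$ and $n-1$.
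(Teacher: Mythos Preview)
Your proof is correct and follows essentially the same approach as the paper's: apply Proposition~\ref{CBm} pointwise to the $m$-parallelepipeds appearing in the zonotope/zonoid representation of $V_m$, then use Minkowski's integral inequality (\ref{Minkint}) with $p=2$ and $k=n$ to pull the $\ell^2$-sum outside. The only cosmetic difference is that the paper works directly with the integral formula $V_m(K)=\frac{2^m}{m!}\int\cdots\int V_m\bigl(\sum_{k=1}^m[o,\omega_k]\bigr)\,d\mu_K^{\otimes m}$ for a general zonoid (and then identifies the resulting integrals with $V_m(K|e_i^{\perp})$ via the push-forward formula for $\mu_{K|e_i^{\perp}}$), whereas you first discretize to zonotopes and pass to the limit---exactly the alternative you yourself mention at the end.
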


\begin{proof}
Suppose that $m\in\{1,\dots,n-2\}$ and that $K$ is a zonoid with generating measure $\mu_K$ in $S^{n-1}$ (see \cite[p.~149]{Gar06}).  We use the formula \cite[Theorem~5.3.3]{Sch93} for the $m$th intrinsic volume of a zonoid (twice, once for $K$ and once for the zonoid $K|e_i^{\perp}$), Proposition~\ref{CBm} (with $A=\sum_{k=1}^m
[o,\omega_k]$), and Minkowski's integral inequality (\ref{Minkint}) (with $p=2$ and $k=n$), to obtain
\begin{eqnarray*}
\lefteqn{V_m(K)}\\
&=&\frac{2^m}{m!}\int_{S^{n-1}}\cdots\int_{S^{n-1}}V_m\left(\sum_{k=1}^m
[o,\omega_k]\right)\,d\mu_K(\omega_1)\cdots d\mu_K(\omega_m)\\
&=&\frac{2^m}{m!}\int_{S^{n-1}}\cdots\int_{S^{n-1}}\left(\frac{1}{n-m}
\sum_{i=1}^n V_m\left(\left(\sum_{k=1}^m
[o,\omega_k]\right)|e_i^{\perp}\right)^2\right)^{1/2}\,d\mu_K(\omega_1)\cdots d\mu_K(\omega_m)\\
&\ge &\left(\frac{1}{n-m}\sum_{i=1}^n
\left(\frac{2^m}{m!}\int_{S^{n-1}}\cdots\int_{S^{n-1}}
V_m\left(\left(\sum_{k=1}^m
[o,\omega_k]\right)|e_i^{\perp}\right)d\mu_K(\omega_1)\cdots d\mu_K(\omega_m)\right)
^2\right)^{1/2}\\
&=&\left(\frac{1}{n-m}\sum_{i=1}^n
\left(\frac{2^m}{m!}\int_{S^{n-1}}\cdots\int_{S^{n-1}}
V_m\left(\sum_{k=1}^m
[o,\omega_k|e_i^{\perp}]\right)\,d\mu_K(\omega_1)\cdots d\mu_K(\omega_m)\right)
^2\right)^{1/2}\\
&=&\left(\frac{1}{n-m}\sum_{i=1}^n
\left(\frac{2^m}{m!}\int_{S^{n-1}}\!\!\!\!\!\!\!\cdots\int_{S^{n-1}}
\!\!\!V_m\!\!\left(\sum_{k=1}^m
\left[o,\frac{\omega_k|e_i^{\perp}}{|\omega_k|e_i^{\perp}|}\right]\right)
\prod_{k=1}^m|\omega_k|e_i^{\perp}|\,d\mu_K(\omega_1)\cdots d\mu_K(\omega_m)\right)
^2\right)^{1/2}\\
&=&\left(\frac{1}{n-m}\sum_{i=1}^n\left(\frac{2^m}{m!}\int_{S^{n-1}\cap e_i^{\perp}}\cdots\int_{S^{n-1}\cap e_i^{\perp}}V_m\left(\sum_{k=1}^m
[o,\theta_k]\right)\,d\mu_{K|e_i^{\perp}}(\theta_1)\cdots d\mu_{K|e_i^{\perp}}(\theta_m)\right)^2\right)^{1/2}\\
&=&\left(\frac{1}{n-m}\sum_{i=1}^nV_m(K|e_i^{\perp})^2\right)^{1/2}.
\end{eqnarray*}
Here $\mu_{K|e_i^{\perp}}$ denotes the generating measure in $S^{n-1}\cap e_i^{\perp}$ of the zonoid $K|e_i^{\perp}$. The penultimate equality in the previous display is a consequence of the formula (see, for example, \cite[(10)]{HugS11})
$$\mu_{K|e_i^{\perp}}(A)=\int_{S^{n-1}\setminus\{\pm e_i\}}1_A\left(
\frac{u|e_i^{\perp}}{|u|e_i^{\perp}|}\right)|u|e_i^{\perp}|\,d\mu_K(u),$$
where $1_A$ is the characteristic function of an arbitrary Borel set $A$ in $S^{n-1}$.  This proves the inequality (\ref{eq1}).
\end{proof}

By Proposition~\ref{CBm}, equality holds in (\ref{eq1}) when $\dim K\le m$.  Otherwise, if equality holds, then equality holds in the previous displayed inequality.  A direct consequence of the equality condition for Minkowski's integral inequality (\ref{Minkint}) is that equality in (\ref{eq1}) holds if and only if there are constants $b_i$, $i=1,\dots,n$, and a function $g(\omega_1,\dots,\omega_m)$ which is measurable with respect to the product measure $\nu_K=\mu_K\times\cdots\times\mu_K$ on $\left(S^{n-1}\right)^m$, such that
\begin{equation}\label{finaleqz}
V_m\left(\left(\sum_{k=1}^m
[o,\omega_k]\right)|e_i^{\perp}\right)=b_ig(\omega_1,\dots,\omega_m),
\end{equation}
for all $i=1,\dots,n$ and $\nu_K$-almost all $(\omega_1,\dots,\omega_m)\in \left(S^{n-1}\right)^m$.  When $m=1$, this condition assumes the following more explicit form.

\begin{cor}\label{Zonoidmcor}
Let $K$ be a zonoid in $\R^n$ with $\dim K\ge 1$ and let $m=1$.  Then \eqref{eq1} holds with equality if and only if either $\dim K=1$ or $\dim K>1$ and $K$ is a zonotope of the form \begin{equation}\label{zon1eq}
K=\sum_{r=1}^{2^n}[o,a_r(\ee_1u_1,\dots,\ee_nu_n)],
\end{equation}
where $u=(u_1,\dots,u_n)\in S^{n-1}$, $a_r\ge 0$, and $\ee_i=\pm 1$, $i=1,\dots,n$.
\end{cor}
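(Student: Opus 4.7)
The plan is to exploit the equality condition \eqref{finaleqz} derived from Minkowski's integral inequality in the proof of Theorem~\ref{Zonoidm}, specialized to $m=1$. In that case, $V_1([o,\omega]|e_i^\perp)$ reduces to the length of the projection of the unit segment $[o,\omega]$ onto $e_i^\perp$, which equals $\sqrt{1-\omega_i^2}$ for $\omega\in S^{n-1}$. Thus \eqref{finaleqz} becomes
$$\sqrt{1-\omega_i^2}=b_i\,g(\omega),\qquad i=1,\dots,n,$$
for $\mu_K$-almost all $\omega\in S^{n-1}$, where $\mu_K$ is the generating measure of $K$.

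Next, I will deduce that on $\supp\mu_K$ the ratios $\sqrt{1-\omega_i^2}:\sqrt{1-\omega_j^2}$ are independent of $\omega$; any degenerate case in which some $b_i=0$ forces $\omega_i=\pm 1$ and is absorbed into the case $\dim K=1$. Combined with the identity $\sum_i \omega_i^2 = 1$, this pins down $(\omega_1^2,\dots,\omega_n^2)$ to a single value $(u_1^2,\dots,u_n^2)$ for some $u\in S^{n-1}$. Hence every $\omega\in\supp\mu_K$ lies in the finite set $\{(\ee_1 u_1,\dots,\ee_n u_n):\ee_i\in\{\pm 1\}\}$, which has at most $2^n$ points. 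Since $\mu_K$ is then a finite sum of atoms at these points, $K$ is a Minkowski sum of segments in the corresponding directions, which is exactly the form \eqref{zon1eq}.

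For the converse, I will check directly that every zonotope of the form \eqref{zon1eq} satisfies the equality condition: each $\omega\in\supp\mu_K$ has $|\omega_i|=|u_i|$ for every $i$, so $\sqrt{1-\omega_i^2}=\sqrt{1-u_i^2}$ is constant on the support, and choosing $b_i=\sqrt{1-u_i^2}$ and $g\equiv 1$ realizes \eqref{finaleqz}. The case $\dim K=1$ is handled separately, where equality in \eqref{eq1} follows immediately from Proposition~\ref{CBm} applied to a segment. I do not anticipate a serious obstacle; the only substantive step is the geometric interpretation in the second paragraph, and it follows at once from the explicit formula for the projected length of a unit segment together with the coordinate-wise sign ambiguity this produces.
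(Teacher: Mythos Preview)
Your proposal is correct and follows essentially the same route as the paper: specialize \eqref{finaleqz} to $m=1$, use $V_1([o,\omega]|e_i^\perp)=\sqrt{1-\omega_i^2}$, and combine the proportionality relations with $\sum_i\omega_i^2=1$ to force $(\omega_1^2,\dots,\omega_n^2)$ to a single value, so that $\supp\mu_K$ lies in the $2^n$ sign-reflections of a fixed $u\in S^{n-1}$. The paper phrases the key step as a direct comparison of two support points (deriving $1-\alpha_i^2=c^2(1-\beta_i^2)$ and summing to get $c^2=1$), whereas you phrase it via constant ratios, but these are the same computation; you also spell out the converse direction and the $\dim K=1$ case via Proposition~\ref{CBm}, which the paper leaves implicit.
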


\begin{proof}
If $m=1$ and $\dim K>1$, the equality condition \eqref{finaleqz} states that there are constants $b_i$, $i=1,\dots,n$, and a $\mu_K$-measurable function $g(\omega)$ on $S^{n-1}$, such that
\begin{equation}\label{25}
V_1\left([o,\omega]|e_i^{\perp}\right)=b_ig(\omega),
\end{equation}
for $i=1,\dots,n$ and $\nu_K$-almost all $\omega\in S^{n-1}$.  Let $\omega_0=(\alpha_1,\dots,\alpha_n)$ and $\omega_1=(\beta_1,\dots,\beta_n)$ be two points in $S^{n-1}$ for which (\ref{25}) holds.  We may assume that
$g(\omega_1)\neq 0$, for otherwise $g(\omega)=0$ for all $\omega$ for which (\ref{25}) holds and this implies that $K=\{o\}$.
Let $c=g(\omega_0)/g(\omega_1)$.  Then from (\ref{25}) with $\omega=\omega_0$ and $\omega=\omega_1$, we obtain
$$\sum_{1\le k\le n,~k\neq i}\alpha_k^2=c^2\sum_{1\le k\le n,~k\neq i}\beta_k^2,$$
that is, $1-\alpha_i^2=c^2(1-\beta_i^2)$, for $i=1,\dots,n$.  Adding these equations gives $c^2=1$ and then we conclude that $\alpha_i=\pm \beta_i$, for $i=1,\dots,n$.  This shows that the support of $\mu_K$ must be a subset of the $2^n$ points in $S^{n-1}$ of the form $(\ee_1\alpha_1,\dots,\ee_n\alpha_n)$, where $\ee_i=\pm 1$, $i=1,\dots,n$.  Hence $K$ is a zonotope given by (\ref{zon1eq}).
\end{proof}

Inequality (\ref{eq1}) is not generally true for arbitrary convex bodies.  To see this, let $n=3$ and $m=1$, and recall that $C^3$ denotes the standard $o$-symmetric regular coordinate cross-polytope in $\R^3$.  Then $C^3|e_i^{\perp}$ is a square of side length $\sqrt{2}$, so $V_1(C^3|e_i^{\perp})=4\sqrt{2}/2=2\sqrt{2}$, for $i=1,2,3$.  Using (\ref{V1cross}), we see that (\ref{eq1}) is false for $C^3$ if and only if
$$\frac{\left(12\sqrt{2}\arccos(1/3)/2\pi\right)^2}
{3(2\sqrt{2})^2}< 1/2.$$
Computation shows that the left-hand side of the previous inequality is $0.46058...$, so (\ref{eq1}) is indeed false for $C^3$.

\section{The case $m<n-1$: Other results}\label{less}

The problem of finding a sharp inequality of the form (\ref{eq1}) that holds for general compact convex sets when $m<n-1$ appears to be difficult (see Problem~\ref{prob4}).  We can prove a weaker result, for which we need a definition.  Let $K$ be a convex body in $\R^n$ and let $m\in \{1,\dots,n-2\}$.  Then the $m$th area measure $S_m(K,\cdot)$ of $K$ satisfies the hypotheses of Minkowski's existence theorem (see, for example, \cite[(A.20), p.~399]{Gar06}) and hence is the surface area measure of a unique convex body $B_mK$ with centroid at the origin; in other words, $S(B_mK,\cdot)=S_m(K,\cdot)$.  In fact, it is enough to assume that $\dim K>m$ in order to conclude that a compact convex set $B_mK$ satisfying $S(B_mK,\cdot)=S_m(K,\cdot)$ exists. To see this, assume firstly that $K$ is a polytope, and suppose that the support of $S_m(K,\cdot)$ is contained in $u^\perp$ for some $u\in S^{n-1}$.  If $v\in S^{n-1}\setminus u^\perp$, the supporting set to $K$ in the direction $v$ must have dimension less than $m$. Hence the union of all such supporting sets also has dimension less than $m$. But this union contains the boundary of $K$ except the shadow boundary of $K$ in the direction $u$, and therefore has the same dimension as $K|u^\perp$.  However, if $\dim K>m$, then $\dim(K|u^\perp)\geq m$, a contradiction. We conclude that $S_m(K,\cdot)$ is not contained in $u^\perp$ for any $u\in S^{n-1}$, so it satisfies the hypotheses of Minkowski's existence theorem.  By approximation, the same conclusion is reached for arbitrary compact convex $K$ with $\dim K>m$.

The following lemma is stated with the assumption $\dim K>m$.  This is natural, since if $\dim K<m$, both sides of (\ref{eqmn}) are zero, while if $\dim K=m$, we have the equality provided by Proposition~\ref{CBm}.

\begin{thm}\label{mlemma}
Let $m\in \{1,\dots,n-2\}$ and $K$ be a compact convex set in $\R^n$ with $\dim K>m$.  Then
\begin{equation}\label{eqmn}
V_m(K)^2\ge \frac{1}{\pi}\left( \frac{\Gamma(\frac{n-m}{2})}{\Gamma(\frac{n-m+1}{2})}\right)^2 \sum_{i=1}^nV_{m}(K|e_i^{\perp})^2.
\end{equation}
\end{thm}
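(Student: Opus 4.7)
The plan is to deduce (\ref{eqmn}) from the case $m=n-1$ of Theorem~\ref{BetkeMcMullenSquare}, transferred through the auxiliary body $B_mK$ introduced just before the theorem statement. Concretely, I will establish two identities
\begin{equation*}
V_m(K)=\alpha_{n,m}\,V_{n-1}(B_mK),\qquad V_m(K|e_i^\perp)=\beta_{n,m}\,V_{n-1}\bigl((B_mK)|e_i^\perp\bigr),
\end{equation*}
with explicit constants depending only on $n$ and $m$, and then apply the inequality (\ref{SQUARE}) to $B_mK$. This immediately yields (\ref{eqmn}) with constant $(\beta_{n,m}/\alpha_{n,m})^2$, which will turn out to equal $\pi^{-1}\bigl(\Gamma(\frac{n-m}{2})/\Gamma(\frac{n-m+1}{2})\bigr)^2$.

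For the first identity I would combine (\ref{Viform}) with the mixed-area representation $V(K,m;B^n,n-m)=\frac{1}{n}\int_{S^{n-1}}h_{B^n}\,dS_m(K,\cdot)=\frac{1}{n}S_m(K,S^{n-1})$ and then use the defining relations $S(B_mK,\cdot)=S_m(K,\cdot)$ and $S(B_mK,S^{n-1})=2V_{n-1}(B_mK)$, arriving at $\alpha_{n,m}=2\binom{n}{m}/(n\kappa_{n-m})$. For the second identity I would evaluate the mixed volume $V(K,m;[-e_i,e_i],1;B^n,n-m-1)$ in two ways: via the same integral representation with the segment $[-e_i,e_i]$ as test body it equals $\frac{1}{n}\int_{S^{n-1}}|v\cdot e_i|\,dS_m(K,v)$, while the classical identity expressing a mixed volume with a line-segment argument as a mixed volume of projections on the orthogonal hyperplane yields $\frac{2\kappa_{n-m-1}}{n\binom{n-1}{m}}V_m(K|e_i^\perp)$. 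Comparing with Cauchy's projection formula $2V_{n-1}\bigl((B_mK)|e_i^\perp\bigr)=\int_{S^{n-1}}|v\cdot e_i|\,dS(B_mK,v)=\int_{S^{n-1}}|v\cdot e_i|\,dS_m(K,v)$ then gives $\beta_{n,m}=\binom{n-1}{m}/\kappa_{n-m-1}$.

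What remains is bookkeeping with gamma functions. Using $\binom{n}{m}/\binom{n-1}{m}=n/(n-m)$, the ratio collapses to $\beta_{n,m}/\alpha_{n,m}=2\kappa_{n-m-1}/\bigl((n-m)\kappa_{n-m}\bigr)$, and the recursion $\kappa_k=\pi^{k/2}/\Gamma(k/2+1)$ together with $\Gamma\bigl(\frac{n-m}{2}+1\bigr)=\frac{n-m}{2}\Gamma\bigl(\frac{n-m}{2}\bigr)$ reduces this to $\pi^{-1/2}\,\Gamma(\frac{n-m}{2})/\Gamma(\frac{n-m+1}{2})$, whose square is the constant in (\ref{eqmn}). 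The consistency check $m=n-1$ (in which case $B_mK=K$ up to translation and the constant becomes $1$, recovering Theorem~\ref{BetkeMcMullenSquare}) is reassuring. The main obstacle is correctly deriving the mixed-volume identity with one line-segment argument and matching normalizations across ambient dimensions $n$ and $n-1$; once that is in place, the argument is essentially a change of variables from intrinsic volumes of $K$ to surface-type quantities of $B_mK$, on which the sharp $m=n-1$ inequality has already been established.
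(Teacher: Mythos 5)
Your proposal follows essentially the same route as the paper: express $V_m(K)$ and $V_m(K|e_i^\perp)$ in terms of $V_{n-1}(B_mK)$ and $V_{n-1}((B_mK)|e_i^\perp)$ respectively (the paper cites \cite[(A.34), p.~405]{Gar06} and the generalized Cauchy projection formula \cite[(A.45), p.~408]{Gar06} to obtain the same constants $\alpha_{n,m}$ and $\beta_{n,m}$), and then apply (\ref{SQUARE}) to $B_mK$. One small slip: the transferred inequality has constant $(\alpha_{n,m}/\beta_{n,m})^2$, not $(\beta_{n,m}/\alpha_{n,m})^2$, but you then also invert the ratio when simplifying, so the two errors cancel and your final constant is correct.
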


\begin{proof}
As was noted above, the assumption $\dim K>m$ guarantees that $B_mK$ exists.  By \cite[(A.34), p.~405]{Gar06}, we have
$$V_m(K)=\frac{\binom{n}{m}}{n\kappa_{n-m}}S_m(K,S^{n-1}).$$
This and the fact that $S_m(K,S^{n-1})=S(B_mK,S^{n-1})$ yields
\begin{equation}\label{f1}
V_m(K)=\frac{2\binom{n}{m}}{n\kappa_{n-m}}V_{n-1}(B_mK).
\end{equation}
By the generalized Cauchy projection formula \cite[(A.45), p.~408]{Gar06},
$$V_m(K|u^{\perp})=\frac{\binom{n-1}{m}}{2\kappa_{n-m-1}}
\int_{S^{n-1}}|u\cdot v|\,dS_m(K,v),$$
for all $u\in S^{n-1}$. This and $S_m(K,\cdot)=S(B_mK,\cdot)$ imply that
\begin{equation}\label{f2}
V_m(K|u^{\perp})=\frac{\binom{n-1}{m}}{\kappa_{n-m-1}}V_{n-1}(B_mK|u^{\perp}),
\end{equation}
for all $u\in S^{n-1}$.  Now using (\ref{f1}), (\ref{SQUARE}) with $K$ replaced by $B_mK$, and (\ref{f2}) with $u=e_i^{\perp}$, we obtain
\begin{eqnarray*}\label{roughbound}
V_m(K)^2 &= & \left( \frac{2\binom{n}{m}}{n\kappa_{n-m}}\right)^2 V_{n-1}(B_mK)^2\geq \left( \frac{2\binom{n}{m}}{n\kappa_{n-m}}\right)^2 \sum_{i=1}^nV_{n-1}(B_mK|e_i^{\perp})^2 \\
&= & \left( \frac{2\kappa_{n-m-1}\binom{n}{m}}{n\kappa_{n-m}\binom{n-1}{m}}\right)^2 \sum_{i=1}^nV_{m}(K|e_i^{\perp})^2 = \frac{1}{\pi}\left( \frac{\Gamma(\frac{n-m}{2})}{\Gamma(\frac{n-m+1}{2})}\right)^2 \sum_{i=1}^nV_{m}(K|e_i^{\perp})^2.
\end{eqnarray*}
\end{proof}

The previous bound is not optimal.  Indeed, the proof of Theorem~\ref{mlemma} shows that equality in (\ref{eqmn}) would imply that equality holds in the left-hand inequality in (\ref{SQUARE}) when $K$ is replaced by $B_mK$.  The equality condition for (\ref{SQUARE}) then yields that either $\dim B_mK\le n-1$ or $B_mK$ is a coordinate cross-polytope.  In either case, the surface area measure of $B_mK$, which is just the $m$th area measure of $K$, would have atoms unless it is the zero measure.  This contradicts \cite[Theorem~4.6.5]{Sch93}, which states that an $m$th area measure cannot be positive on sets whose Hausdorff dimension is less than $n-m-1$.

For example, when $n=3$ and $m=1$, the constant in (\ref{eqmn}) is
$$\frac{1}{\pi}\left( \frac{\Gamma(1)}{\Gamma(\frac{3}{2})}\right)^2=\frac{4}{\pi^2}=0.40528...,$$
lower than the probable best bound $0.46058...$ for the regular coordinate cross-polytope.  (Note that it is higher than constant $1/3$ in the easy bound (\ref{easynbound}).)

The hypothesis (\ref{CGconj}) in the following lemma was shown in \cite[Theorem~4.1]{CamG11} to be equivalent to the existence of a coordinate box $Z$ such that $V_m(Z|e_i^{\perp})=V_m(K|e_i^{\perp})$, for $i=1,\dots,n$.  Inequality (\ref{CGconj}) is true when $m=1$ or $m=n-1$; this follows from (\ref{trivmax2}) and \cite[Theorem~3.1]{CamG11} or the left-hand inequality in Theorem~\ref{generalBetkeMcMullen} with $a_i=1$ for $i=1,\dots,n$, respectively.  We prove below in Lemma~\ref{m2} that (\ref{CGconj}) is also true when $m=n-2$.  For $m\in \{2,\dots,n-3\}$, (\ref{CGconj}) remains a conjecture.

\begin{lem}\label{lemnewer}
Let $K$ be a compact convex set in $\R^n$ and let $m\in \{1,\dots,n-2\}$.  If
\begin{equation}\label{CGconj}
V_m(K|e_k^{\perp})\le \frac{1}{n-m}\sum_{i=1}^nV_m(K|e_i^{\perp}),
\end{equation}
for $k=1,\dots,n$, then
\begin{equation}\label{eq111}
\sum_{i=1}^nV_m(K|e_i^{\perp})^2\le \frac{1}{n-m}\left(\sum_{i=1}^nV_m(K|e_i^{\perp})\right)^2.
\end{equation}
\end{lem}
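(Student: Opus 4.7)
The plan is to reduce the lemma to a one-line algebraic statement about a finite sequence of nonnegative reals. Set $a_i = V_m(K|e_i^\perp)$ for $i = 1, \dots, n$ and let $S = \sum_{i=1}^n a_i$. Since intrinsic volumes are nonnegative, each $a_i \ge 0$, so $S \ge 0$. In these terms, the hypothesis \eqref{CGconj} says precisely that $a_k \le S/(n-m)$ for every $k \in \{1, \dots, n\}$, i.e., none of the $a_i$ is too large compared with the average; and the desired conclusion \eqref{eq111} says that $\sum_{i=1}^n a_i^2 \le S^2/(n-m)$. So the geometry has dropped out entirely and what remains is a purely algebraic inequality about nonnegative numbers subject to a uniform upper bound.

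With this reformulation the argument is essentially one line. Fix any $i$ and multiply the inequality $a_i \le S/(n-m)$ by the nonnegative factor $a_i$, preserving the direction of the inequality, to obtain $a_i^2 \le a_i S/(n-m)$. Summing over $i = 1, \dots, n$ and pulling the constant $S/(n-m)$ outside the sum gives $\sum_{i=1}^n a_i^2 \le (S/(n-m)) \sum_{i=1}^n a_i = S^2/(n-m)$, which is exactly \eqref{eq111}. Equivalently, one can observe that $\sum a_i^2 \le (\max_i a_i) \sum a_i \le (S/(n-m)) \cdot S$.

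There is no real obstacle in the proof itself; all the content of the lemma is hidden in its hypothesis. The interest of the statement comes from the discussion preceding it, where \eqref{CGconj} is identified with the existence of a coordinate box $Z$ having the same coordinate projection volumes as $K$ and is known in the cases $m = 1$, $m = n-2$, and $m = n-1$. Thus \emph{the lemma should really be read as saying} that whenever the Campi--Gronchi conjecture for the $m$th intrinsic volume holds for $K$, the squared version \eqref{zonoid33} follows automatically from it by the trivial rearrangement above.
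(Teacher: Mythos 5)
Your proof is correct, and it is noticeably more elementary than the one in the paper. You simply multiply the hypothesis $a_i \le S/(n-m)$ by $a_i \ge 0$, obtaining $a_i^2 \le a_i S/(n-m)$, and sum over $i$; this gives $\sum a_i^2 \le S^2/(n-m)$ directly. The paper instead normalizes to $\sum c_i = 1$, views the constraint set $\{0 \le c_k \le 1/(n-m), \sum c_k = 1\}$ as a polytope $P$ in the hyperplane $\{x_1 + \cdots + x_n = 1\}$, argues that the Euclidean norm is maximized at a vertex of $P$, and counts how many coordinates must vanish at such a vertex to conclude that at most $n-m$ are nonzero, whence $\sum c_i^2 \le (n-m)/(n-m)^2 = 1/(n-m)$. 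Both arguments are valid; yours buys brevity and avoids any appeal to the geometry of polytopes or convexity of the norm, while the paper's vertex analysis has the side benefit of exhibiting the extremal configurations explicitly (exactly $n-m$ of the $c_i$ equal to $1/(n-m)$ and the rest zero), which corresponds to the case $\dim K = m$ and ties in with the equality case in Proposition~\ref{CBm}. Your closing observation about the role of \eqref{CGconj} as the real hypothesis (rather than something established in the lemma) is accurate and matches the discussion surrounding the lemma in the paper.
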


\begin{proof}
Let $V_m(K|e_i^{\perp})=c_i$, for $i=1,\dots,n$.  By homogeneity, we may assume without loss of generality that $\sum_{i=1}^nc_i=1$.  Using (\ref{CGconj}), we obtain the additional constraints $0\le c_k\le 1/(n-m)$, for $k=1,\dots,n$.

The set of $(c_1,\dots,c_n)$ satisfying these constraints is an $(n-1)$-dimensional convex polytope $P$ contained in the hyperplane $\{x=(x_1,\dots,x_n)\in\R^n:x_1+\cdots+x_n=1\}$.  The maximum distance $d$ from the origin to a point in $P$ is attained at a vertex of $P$.  At such a vertex, we have either $c_k=0$ or $c_k=1/(n-m)$ for at least $n-1$ of the $k$'s.  If less than $m$ of these $c_k$'s are zero, we would have $\sum_{k=1}^nc_k>1$, a contradiction.  Therefore at least $m$ of the $c_k$'s are zero.  Consequently,
\begin{eqnarray*}
d&=&\left(\sum_{i=1}^nc_i^2\right)^{1/2}\le \left((n-m)\max_{1\le k\le n}c_k^2\right)^{1/2}\\
&\le &\left((n-m)\frac{1}{(n-m)^2}\right)^{1/2}
=\frac{1}{\sqrt{n-m}}=\frac{1}{\sqrt{n-m}}\sum_{i=1}^nc_i,
\end{eqnarray*}
which yields (\ref{eq111}).
\end{proof}

\begin{lem}\label{m2}
Let $K$ be a compact convex set in $\R^n$.  Then (\ref{CGconj}) holds when $m=n-2$.
\end{lem}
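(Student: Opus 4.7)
The plan is to reduce the case $m=n-2$ of (\ref{CGconj}) to the already-known case $m=n-1$ of (\ref{CG33}), namely (\ref{BM33}), applied one dimension lower. Fix $k\in\{1,\dots,n\}$ and regard $L_k=K|e_k^{\perp}$ as a compact convex set in the $(n-1)$-dimensional ambient space $e_k^{\perp}$. Since intrinsic volumes are independent of the ambient space, applying (\ref{BM33}) to $L_k$ inside $e_k^{\perp}\cong\R^{n-1}$, using the orthonormal basis $\{e_i:i\neq k\}$ of $e_k^{\perp}$, yields
$$V_{n-2}(K|e_k^{\perp})\le\sum_{i\neq k}V_{n-2}\bigl((K|e_k^{\perp})|(e_k^{\perp}\cap e_i^{\perp})\bigr).$$

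The next step is to bound each summand on the right by $V_{n-2}(K|e_i^{\perp})$. For this I would use the elementary fact that iterated orthogonal projections onto coordinate subspaces collapse to a single projection onto the intersection:
$$(K|e_k^{\perp})|(e_k^{\perp}\cap e_i^{\perp})=K|(e_k^{\perp}\cap e_i^{\perp})=(K|e_i^{\perp})|(e_k^{\perp}\cap e_i^{\perp}).$$
Viewing the rightmost expression as a projection of $K|e_i^{\perp}$ within $e_i^{\perp}\cong\R^{n-1}$, the monotonicity of intrinsic volumes under projection given in (\ref{cgtriv}) applied inside $e_i^{\perp}$ gives
$$V_{n-2}\bigl(K|(e_k^{\perp}\cap e_i^{\perp})\bigr)\le V_{n-2}(K|e_i^{\perp}).$$

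Chaining the two estimates produces $V_{n-2}(K|e_k^{\perp})\le\sum_{i\neq k}V_{n-2}(K|e_i^{\perp})$, and adding $V_{n-2}(K|e_k^{\perp})$ to both sides converts this into
$$2\,V_{n-2}(K|e_k^{\perp})\le\sum_{i=1}^{n}V_{n-2}(K|e_i^{\perp}),$$
which is exactly (\ref{CGconj}) for $m=n-2$, since $n-m=2$. There is no real technical obstacle here: the argument is a straightforward two-line bootstrap from the already established case $m=n-1$ of (\ref{CG33}) in one lower dimension, combined with the trivial monotonicity (\ref{cgtriv}). The only thing requiring verification is the commutativity/collapse of iterated orthogonal projections onto coordinate subspaces, which is immediate from the fact that the coordinate directions $e_k$ and $e_i$ are mutually orthogonal.
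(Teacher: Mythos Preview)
Your proof is correct and is essentially identical to the paper's own argument. The paper also applies the $(n-1)$-dimensional Betke--McMullen bound (stated there as the left-hand inequality of Proposition~\ref{generalBetkeMcMullen} with all $a_i=1$, which is exactly your (\ref{BM33})) to $L=K|e_k^{\perp}$, then uses the same commutativity of iterated coordinate projections together with (\ref{cgtriv}) to bound each $V_{n-2}\bigl((K|e_k^{\perp})|e_i^{\perp}\bigr)$ by $V_{n-2}(K|e_i^{\perp})$, and finishes by adding $V_{n-2}(K|e_k^{\perp})$ to both sides.
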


\begin{proof}
Without loss of generality, let $k=1$.  We identify $e_1^{\perp}$ with $\R^{n-1}$ and apply the left-hand inequality in Proposition~\ref{generalBetkeMcMullen} with $a_i=1$ for each $i$ and $K$ and $n$ replaced by $L=K|e_1^\perp$ and $n-1$, respectively, to obtain
$$
    V_{n-2}(L)\leq \sum_{i=2}^n V_{n-2}(L|e_i^\perp).
$$
By (\ref{cgtriv}) with $m$, $K$, and $u$ replaced by $n-2$, $K|e_i^\perp$, and $e_1$, respectively, we have
$$V_{n-2}(L|e_i^\perp)= V_{n-2}\left((K|e_1^\perp)|e_i^\perp\right)
=V_{n-2}\left((K|e_i^\perp)|e_1^\perp\right)\leq V_{n-2}(K|e_i^\perp),$$ for
$i=2,\dots,n$.  It follows that
$$V_{n-2}(K|e_1^\perp) = V_{n-2}(L) \leq
     \sum_{i=2}^n V_{n-2}(L|e_i^\perp)\leq \sum_{i=2}^n V_{n-2}(K|e_i^\perp).$$
Adding $V_{n-2}(K|e_1^\perp)$ to both sides, we obtain (\ref{CGconj}) with $m=n-2$.
\end{proof}

When $m=1$ or $m=n-2$, the following result establishes a relationship between the lower bound for $V_m(K)$ for a zonoid $K$ from (\ref{eq1}) and the upper bound for $V_m(K)$ from \cite[Theorem~3.1]{CamG11} (for $m=1$) or Lemma~\ref{m2} (for $m=n-2$). It represents a reverse Cauchy-Schwarz inequality for the numbers $V_m(K|e_i^{\perp})$, $i=1,\dots,n$.  We do not know if the result holds for $m\in \{2,\dots,n-3\}$; see Problem~\ref{prob3}.

\begin{thm}\label{lemnew}
Let $K$ be a compact convex set in $\R^n$ and let $m=1$ or $m=n-2$. Then
\begin{equation}\label{eq11}
\sum_{i=1}^nV_m(K|e_i^{\perp})^2\le \frac{1}{\sqrt{n-m}}\left(\sum_{i=1}^nV_m(K|e_i^{\perp})\right)^2.
\end{equation}
\end{thm}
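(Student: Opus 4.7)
The strategy is to deduce (\ref{eq11}) from Lemma~\ref{lemnewer} by verifying that its hypothesis (\ref{CGconj}) is satisfied in both cases $m=1$ and $m=n-2$. Once this is established, Lemma~\ref{lemnewer} delivers the bound
$$\sum_{i=1}^n V_m(K|e_i^\perp)^2 \le \frac{1}{n-m}\left(\sum_{i=1}^n V_m(K|e_i^\perp)\right)^2,$$
and since $1/(n-m)\le 1/\sqrt{n-m}$ whenever $n-m\ge 1$, inequality (\ref{eq11}) follows at once.

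For the case $m=n-2$, the hypothesis (\ref{CGconj}) is precisely the content of Lemma~\ref{m2}, which has already been proved, so no additional work is required in this case.

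For the case $m=1$, the required estimate $V_1(K|e_k^\perp)\le \frac{1}{n-1}\sum_{i=1}^n V_1(K|e_i^\perp)$ will be obtained by chaining two earlier facts: the monotonicity bound (\ref{trivmax2}), which gives $V_1(K|e_k^\perp)\le V_1(K)$, together with the Campi-Gronchi inequality \cite[Theorem~3.1]{CamG11}, which is precisely (\ref{CG33}) for $m=1$ and yields $V_1(K)\le \frac{1}{n-1}\sum_{i=1}^n V_1(K|e_i^\perp)$. Composing these two estimates produces (\ref{CGconj}) for $m=1$.

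No genuine obstacle arises in executing this plan; the reason Theorem~\ref{lemnew} is confined to $m=1$ and $m=n-2$ is exactly that, within the relevant range $m\in\{1,\dots,n-2\}$, inequality (\ref{CGconj}) is currently known only in these two cases. Any future proof of (\ref{CG33}) for an intermediate value of $m$ would, via the same argument, automatically extend Theorem~\ref{lemnew} to that $m$ as well, which is why the authors record the gap in Problem~\ref{prob3}.
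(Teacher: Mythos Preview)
Your proof is correct and follows essentially the same route as the paper: verify hypothesis (\ref{CGconj}) via Lemma~\ref{m2} for $m=n-2$ and via (\ref{trivmax2}) combined with \cite[Theorem~3.1]{CamG11} for $m=1$, then invoke Lemma~\ref{lemnewer}. You are in fact slightly more careful than the paper in noting the passage from the constant $1/(n-m)$ delivered by Lemma~\ref{lemnewer} to the weaker constant $1/\sqrt{n-m}$ that appears in (\ref{eq11}).
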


\begin{proof}
When $m=1$, (\ref{CGconj}) holds, by (\ref{trivmax2}) for $m=1$ and
\cite[Theorem~3.1]{CamG11}, and when $m=n-2$, (\ref{CGconj}) holds by Lemma~\ref{m2}. Then (\ref{eq11}) with $m=1$ or $m=n-2$ follows directly from Lemma~\ref{lemnewer}.
\end{proof}

We end this section with a counterpart to \cite[Theorem~4.1]{CamG11}.

\begin{thm}\label{SegmentExistence}
Let $K$ be a compact convex set in $\R^n$. There is a line segment $L$ such that
\begin{equation}\label{eq2}
V_1(L|e_i^{\perp})=V_1(K|e_i^{\perp}),
\end{equation}
for $i=1,\dots,n$, if and only if
\begin{equation}\label{eq2a}
V_1(K|e_i^{\perp})^2\le \frac{1}{n-1}\sum_{k=1}^nV_1(K|e_k^{\perp})^2,
\end{equation}
for $i=1,\dots,n$.
\end{thm}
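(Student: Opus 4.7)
The plan is to reduce the statement to an elementary algebraic equivalence by parameterizing line segments and computing the quantities $V_1(L|e_i^\perp)$ explicitly. Write $L=[o,v]$ for some $v=(v_1,\dots,v_n)\in\R^n$ (any line segment is a translate of such an $L$, and projections are translation-invariant up to translation, so $V_1(L|e_i^\perp)$ depends only on the vector $v$). Then $L|e_i^\perp=[o,v|e_i^\perp]$ and
\[
V_1(L|e_i^\perp)=|v|e_i^\perp|=\sqrt{|v|^2-v_i^2}.
\]
Setting $c_i=V_1(K|e_i^\perp)$ and $s=|v|^2$, the system \eqref{eq2} becomes the requirement that $s-v_i^2=c_i^2$ for each $i=1,\dots,n$, with $\sum_i v_i^2=s$.

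The first step is to derive $s$ uniquely from the system. Summing $v_i^2=s-c_i^2$ over $i$ and using $\sum_i v_i^2 = s$ gives $ns-\sum_{k=1}^n c_k^2 = s$, hence
\[
s=\frac{1}{n-1}\sum_{k=1}^n c_k^2.
\]
Thus, if a segment $L$ satisfying \eqref{eq2} exists, then necessarily $v_i^2=s-c_i^2\ge 0$ for each $i$, which is precisely the inequality \eqref{eq2a}.

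Conversely, if \eqref{eq2a} holds, I would define $s$ by the displayed formula above, set $v_i=\sqrt{s-c_i^2}\ge 0$ (the sign is immaterial, and this is well-defined by \eqref{eq2a}), and let $L=[o,v]$. A direct check confirms $\sum_i v_i^2 = ns-\sum_i c_i^2=s=|v|^2$, and then $|v|e_i^\perp|^2=|v|^2-v_i^2=c_i^2$, so $V_1(L|e_i^\perp)=c_i=V_1(K|e_i^\perp)$ for each $i$. There is no real obstacle here; the only subtlety is remembering that $V_1$ is translation-invariant, so working with segments of the form $[o,v]$ incurs no loss of generality, and that \eqref{eq2a} precisely encodes the nonnegativity of the $v_i^2$.
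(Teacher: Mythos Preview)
Your proof is correct and follows essentially the same approach as the paper: both parameterize the segment by a vector (you use $[o,v]$, the paper uses $[-x/2,x/2]$), compute $V_1(L|e_i^\perp)=\sqrt{|v|^2-v_i^2}$, sum the resulting equations to determine $|v|^2$, and observe that the nonnegativity of each $v_i^2$ is exactly \eqref{eq2a}. The only cosmetic difference is your introduction of the auxiliary variable $s=|v|^2$, which slightly streamlines the algebra.
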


\begin{proof}
Let $V_1(K|e_i^\perp)=a_i$, for $i=1,\dots,n$.  If $L=[-x/2,x/2]$ and $x=(x_1,\dots,x_n)$, then (\ref{eq2}) is equivalent to
\begin{equation}\label{SystemV1}
\sum_{1\le k\le n,~k\neq i}x_k^2=a_i^2,
\end{equation}
for $i=1,\dots,n$.  Summing the equations in (\ref{SystemV1}) over $i$, we obtain
$$
(n-1)\sum_{i=1}^nx_i^2=\sum_{i=1}^na_i^2.
$$
Subtracting $(n-1)$ times the $i$th equation in (\ref{SystemV1}), we get
\begin{equation}\label{SystemV2}
(n-1)x_i^2 = \sum_{k=1}^na_k^2 -(n-1)a_i^2.
\end{equation}
The left-hand side is nonnegative, so the right-hand side is also, and this is equivalent to (\ref{eq2a}).

Conversely, assuming that (\ref{eq2a}) holds, we know that the right-hand side of (\ref{SystemV2}) is nonnegative and hence (\ref{SystemV2}) can be solved for $x_i$.  The values of $x_i$ thus obtained also satisfy (\ref{SystemV1}), so (\ref{eq2}) holds for $L=[-x/2,x/2]$ when $x=(x_1,\dots,x_n)$.
\end{proof}

There are convex bodies for which (\ref{eq2a}) does not hold.  To see this, let $P$ be the coordinate box defined by $P=\sum_{k=1}^{n}s_k[-e_k/2,e_k/2]$. Suppose that $s_k>0$ for $k=1,\dots,n-1$ and $s_n=0$. If we take $i=n$ and $K=P$, then (\ref{eq2a}) becomes
$$
(n-1)\left( \sum_{k=1}^{n-1}s_k\right) ^2\leq \sum_{l=1}^{n}\left( \sum_{ 1\le k\le n-1,~k\neq l}s_k\right)^2,
$$
which yields
$$
\sum_{1\le k\le n-1,~k\neq l}s_ks_l\leq 0.
$$
This is false, so (\ref{eq2a}) does not hold for $P$ under the assumptions above.  By continuity, it is also false for the $n$-dimensional coordinate box $P=\sum_{k=1}^{n}s_k[-e_k/2,e_k/2]$ when $s_n>0$ is sufficiently small.

The fact that an $(n-1)$-dimensional coordinate box such as $P$ does not satisfy (\ref{eq2a}) can also be seen as follows. Since $s_n=0$, we have $V_1(P)=V_1(P|e_n^\perp)$.  By Corollary~\ref{Zonoidmcor}, we know that strict inequality holds in (\ref{eq1}) when $K=P$.  This contradicts
(\ref{eq2a}) when $K=P$ and $i=n$.

Note that if $K$ does satisfy (\ref{eq2a}), Theorem~\ref{SegmentExistence} states that there is a line segment $L\in {\mathcal{P}}(K,1)$.  Since equality holds in Theorem~\ref{Zonoidm} when $K=L$, we see by that theorem that $L$ is a zonoid of minimal mean width in ${\mathcal{P}}(K,1)$.

\section{Open problems}\label{problems}

\begin{prob}\label{prob3}
{\em Does (\ref{eq11}) hold when $m\in \{2,\dots,n-3\}$?}
\end{prob}

\begin{prob}\label{prob2}
{\em Is there a version of Theorem~\ref{SegmentExistence} for $m\in \{2,\dots,n-2\}$?}
\end{prob}

\begin{prob}\label{prob4}
{\em Let $K$ be a compact convex set in $\R^n$ and let $m\in \{1,\dots,n-2\}$.  Then is there a constant $c_2=c_2(n,m)$ such that
$$
V_m(K)^2\geq c_2\sum_{i=1}^nV_m(K|e_i^{\perp})^2\ge c_2\sum_{i=1}^nV_m(K\cap e_i^{\perp})^2,
$$
with equality in either inequality involving $V_m(K)$ when $\dim K=n$ if and only if $K$ is an $o$-symmetric regular coordinate cross-polytope (or one of its translates, in the case of the left-hand inequality)?}
\end{prob}

\begin{prob}\label{prob5}
{\em Let $K$ be a convex body in $\R^n$ and let $m\in \{1,\dots,n-2\}$.  Then is there a constant $c_3=c_3(n,m)$ such that
$$
V_{m+1}(K)^{mn}\geq c_3\prod_{i=1}^nV_m(K\cap e_i^{\perp})^{m+1},
$$
with equality if and only if $K$ is an $o$-symmetric coordinate cross-polytope?}
\end{prob}

\begin{prob}\label{prob5a}
{\em Let $K$ be a convex body in $\R^3$.  Is
$$
V_{2}(K)^2\geq \frac{1}{16}\left(\sum_{i=1}^3V_1(K\cap e_i^{\perp})^2\right)^2
-\frac{1}{8}\sum_{i=1}^3V_1(K\cap e_i^{\perp})^4,$$
with equality if and only if $K$ is an $o$-symmetric coordinate cross-polytope?}
\end{prob}

An upper bound for $V_2(K)$ analogous to the lower bound in the previous problem was obtained in \cite[Theorem~4.6]{CamG11}.  The proposed lower bound clearly relates to Heron's formula; in one version, this states that a triangle with sides of length $a$, $b$, and $c$ has area
$$\frac{1}{4}\sqrt{(a^2+b^2+c^2)^2-2(a^4+b^4+c^4)}.$$

\section*{Appendix: Proof of the case $n\ge 4$ of Lemma~\ref{lembm3}}\label{Appendix}

This appendix is devoted to proving that if $K$ is as in Lemma~\ref{leman} and $n\ge 4$, then the function $J_K$ defined by \eqref{jj} is increasing.  It has already been observed in the proof of Lemma~\ref{lembm3} that since $K$ has the same symmetries as $Q^n$, we have that for each $i\in \{1,\dots,n-1\}$, $h_K$ is convex and even as a function of $x_i$ and hence increases with $x_i$ for $x_i\in [0,1]$.  We recall that by (\ref{x1eq}),
\begin{equation}\label{x1eq2}
x_1^2+\dots+x_{n-1}^2=1.
\end{equation}

The plan is to consider the cases $n=4$ and $n=5$ separately and then dispose of the remaining case $n\ge 6$ by means of an induction argument.

Let $n=4$.  By (\ref{jj}),
$$J_K(x_3)=\frac{\int_{x_3}^{\sqrt{\frac{1-x_3^2}{2}}}h_K\left(1,\frac{x_2}{x_1},\frac{x_3}
{x_1},0\right)\,dx_2}
{\sqrt{\frac{1-x_3^2}{2}}-x_3}=\int_{0}^{1}h_K\left(1,\frac{x_2}{x_1},\frac{x_3}
{x_1},0\right)\,dt,
$$
where
\begin{equation}\label{subsx2}
x_2=x_3+t\left(\sqrt{\frac{1-x_3^2}{2}}-x_3\right)
\end{equation}
and where $x_2$ and hence $x_1$ are now functions of $x_3$ and $t$.  It therefore suffices to show that $x_2/x_1$ and $x_3/x_1$ are increasing functions of $x_3$ for any fixed $t$.  To this end, using (\ref{x1eq2}), (\ref{subsx2}), and straightforward but quite tedious calculation, we find that
$$x_1^3\frac{\partial (x_2/x_1)}{\partial x_3}=1-t\ge 0,$$
showing that $x_2/x_1$ increases with $x_3$ for fixed $t$.  Similarly, further calculations yield
$$x_1^3\frac{\partial (x_3/x_1)}{\partial x_3}=
\frac{(2-t^2)\sqrt{(1-x_3^2)/2}-t(1-t)x_3}{\sqrt{2}\sqrt{1-x_3^2}}\ge
\frac{(2-t)x_3}{\sqrt{2}\sqrt{1-x_3^2}}\ge 0,$$
where we used $\sqrt{(1-x_3^2)/2}\ge x_3$.  This shows that $x_3/x_1$ increases with $x_3$ for fixed $t$ and completes the proof for $n=4$.

Henceforth we assume that $n\ge 5$ and make a change of variables by setting
\begin{equation}\label{yieq}
y_i=x_i/x_1,~ i=2,\dots,n-2,~\quad~{\text{and}}~\quad~y_{n-1}=x_{n-1}.
\end{equation}
Then
$$
\frac{\partial y_i}{\partial x_j}=\begin{cases}
(x_1^2+x_i^2)/x_1^3,& {\text{if $i=j=2,\dots,n-2$,}}\\[1ex]
x_ix_j/x_1^3,& {\text{if $i, j=2,\dots, n-2$, $j\neq i$}}.
\end{cases}
$$
The Jacobian of this transformation may be calculated by first noticing that the determinant reduces to one of size $(n-3)\times (n-3)$, since apart from the $(n-2,n-2)$ entry, the last row and column have all entries equal to zero. Then, factor $x_i/x_1^3$ from the $i$th column and $x_j$ from the $j$th row.  Next, subtract the first column from all the others and multiply the $i$th row by $x_{i+1}^2$. Finally, add each row to the first row.  The result is $1/x_1^{3(n-3)}$ times the determinant of a lower triangular matrix with first diagonal entry $\sum_{i=1}^{n-2}x_i^2$ and all other diagonal entries equal to $x_1^2$. Therefore, using (\ref{x1eq2}), the Jacobian reduces to
$$
\frac{\partial (y_2,\dots, y_{n-1})}{\partial (x_2,\dots, x_{n-1})} = \frac{1-y_{n-1}^2}{x_1^{n-1}}.$$
It will be convenient to set, for $i=2,\dots,n-2$,
\begin{equation}\label{Yeq}
    Y_i=(1, y_2, \dots, y_{i})\quad\Rightarrow\quad
  |Y_i|^2=1+y_2^2+\cdots+y_{i}^2.
\end{equation}
Then, using (\ref{x1eq2}) and (\ref{yieq}) to get
\begin{equation}\label{x1eq3}
x_1^2=\frac{1-y_{n-1}^2}{1+y_2^2+\cdots+y_{n-2}^2}=\frac{1-y_{n-1}^2}{|Y_{n-2}|^2},
\end{equation}
we can rewrite the Jacobian as
\begin{equation}\label{Jaceq}
\frac{\partial (y_2,\dots, y_{n-1})}{\partial (x_2,\dots, x_{n-1})} =
\frac{(1+y_2^2+\dots+y_{n-2}^2)^{(n-1)/2}} {(1-y_{n-1}^2)^{(n-3)/2}}=
\frac{|Y_{n-2}|^{n-1}} {(1-y_{n-1}^2)^{(n-3)/2}}.
\end{equation}

In order to describe the region of integration in the expression (\ref{jj}) for $J_K$, we begin by recalling that the general region $\Omega\subset S^{n-1}$ of interest is given by (\ref{omdef}) and hence
\begin{equation}\label{omdef33}
0\le x_{n-1}\le x_{n-2}\le\cdots \le x_1.
\end{equation}
We already know from (\ref{inarray}) and (\ref{yieq}) that
$$0\leq y_{n-1}\leq \frac{1}{\sqrt{n-1}}.$$
To bound $y_2=x_2/x_1$, observe that by (\ref{x1eq2}), $y_2$ is an increasing function of $x_2$. Therefore by (\ref{yieq}) and (\ref{omdef33}), the maximum and minimum of $y_2$ occur when $x_2=x_1$ and $x_2=x_3=\cdots =x_{n-2}=y_{n-1}$, respectively. By (\ref{x1eq2}), this gives $L_2\le y_2\le 1$, where
\begin{equation}\label{L2eq}
L_2=\frac{y_{n-1}}{(1-(n-2)y_{n-1}^2)^{1/2}}.
\end{equation}
Similarly, if $i\in\{3,\dots,n-3\}$, once $y_{n-1}, y_{2},\dots,y_{i-1}$ are fixed, the maximum and minimum
of $y_i=x_i/x_1$ occur when $x_i=x_{i-1}$ and $x_{i}=x_{i+1}=\cdots=x_{n-2}=y_{n-1}$, respectively.  Using (\ref{x1eq2}) and (\ref{yieq}) again, we find that $L_i\le y_i\le y_{i-1}$, where
\begin{equation}\label{Lieq}
L_i=\frac{y_{n-1}(1+y_2^2+\dots+y_{i-1}^2)^{1/2}}{(1-(n-i)y_{n-1}^2)^{1/2}}
=\frac{y_{n-1}|Y_{i-1}|}{(1-(n-i)y_{n-1}^2)^{1/2}},
\end{equation}
for $i=3,\dots, n-2$.  Consequently, (\ref{Yeq}), (\ref{x1eq3}), (\ref{Jaceq}), (\ref{L2eq}), and (\ref{Lieq}) allow the function $J_K$ defined by \eqref{jj} to be rewritten as
\begin{equation}\label{JJJ}
J_K(y_{n-1})=\frac{\int_{L_2}^{1} \int_{L_3}^{y_2}\cdots \int_{L_{n-2}}^{y_{n-3}}
h_K\left(Y_{n-2}, \frac{y_{n-1}|Y_{n-2}|}{\sqrt{1-y_{n-1}^2}},0\right) |Y_{n-2}|^{1-n} \,dy_{n-2}\cdots dy_2}{\int_{L_2}^{1} \int_{L_3}^{y_2}\cdots \int_{L_{n-2}}^{y_{n-3}}|Y_{n-2}|^{1-n}\,dy_{n-2}\cdots dy_2}.
\end{equation}
(Note that the denominator of (\ref{Jaceq}) does not depend on $y_2,\dots,y_{n-2}$ and so can be factored from the integrals in the numerator and denominator of $J_K$ and then canceled.)  Here, and in what follows, we abbreviate $h_K(1,y_2,\dots,y_{i},z_{i+1},
\dots,z_{n-1},0)$, for $i=2,\dots,n-2$, by writing $h_K(Y_{i},z_{i+1}
\dots,z_{n-1},0)$ instead.

If $h_K$ is differentiable, then by its definition, $J_K$ is also differentiable with respect to $y_{n-1}$ on $(0,1/\sqrt{n-1})$. Assuming this, we shall prove that the derivative is nonnegative and hence that $J_K$ is increasing.  In fact, we may assume without loss of generality that $h_K$ is differentiable, or, equivalently (see \cite[p.~107]{Sch93}) that $K$ is strictly convex.  Indeed, if this is not the case, we may choose a sequence $\{K_i\}$ of strictly convex bodies converging to $K$ in the Hausdorff metric; see \cite[p.~158--160]{Sch93} for even stronger results of this type.  Then $h_{K_i}$ converges uniformly on $S^{n-1}$ to $h_K$ (see \cite[p.~54]{Sch93}) and hence $J_{K_i}$ converges to $J_K$.  Therefore, if each $J_{K_i}$ is increasing, $J_K$ is also increasing.

We proceed to differentiate $J_K$ with respect to $y_{n-1}$.  We shall use the fact that for $i=2,\dots,n-3$,
\begin{equation}\label{Liobs}
y_i=L_i\quad\Rightarrow \quad L_{i+1}=L_i.
\end{equation}
Indeed, (\ref{L2eq}) and (\ref{Lieq}) imply that both equations in (\ref{Liobs}) are equivalent to
$$
  1+y_2^2+\dots +y_i^2=(1+y_2^2+\dots +y_{i-1}^2)\,  \frac{1-(n-i-1)y_{n-1}^2}{1-(n-i)y_{n-1}^2}.
$$
Let $J_K=N/D$, where $N=N(K)$ and $D$ are the numerator and denominator in (\ref{JJJ}), and let $z=y_{n-1}|Y_{n-2}|/\sqrt{1-y_{n-1}^2}$.  Applying Leibniz's rule for differentiating the integrals $N$ and $D$, we notice that the terms involving the derivatives of the limits $L_2$ and $1$ of the integrals with respect to $y_2$ vanish, since $y_2=L_2$ implies $L_2=L_3$, in view of (\ref{Liobs}), and hence $y_2=L_3$.  Similarly, for $i=3,\dots,n-3$, terms involving the derivatives of the limits $L_i$ and $y_{i-1}$ of the integrals with respect to $y_i$ vanish, since (\ref{Liobs}) says that $y_i=L_i$ implies $L_i=L_{i+1}$, and then $y_i=L_{i+1}$.  Consequently, $(dJ_K/dy_{n-1})D^2$ equals
\begin{align*}
\MoveEqLeft{D\int\limits_{L_2}^{1} \int\limits_{L_3}^{y_2}\cdots \int\limits_{L_{n-3}}^{y_{n-4}}
   \left( -\frac{\partial L_{n-2}}{\partial y_{n-1}}
\left.\left(h_K(Y_{n-2}, z,0)|Y_{n-2}|^{1-n} \right)\right|_{y_{n-2}=L_{n-2}} +  \int_{L_{n-2}}^{y_{n-3}}\frac{e_{n-1}\cdot\nabla h_K(Y_{n-2}, z,0)}{(1-y_{n-1}^2)^{3/2}|Y_{n-2}|^{n-2}}\right)}\\
&\times dy_{n-3}\cdots dy_2 -N\int_{L_2}^{1} \int_{L_3}^{y_2}\cdots \int_{L_{n-3}}^{y_{n-4}}
\left(-\frac{\partial L_{n-2}}{\partial y_{n-1}}
\left.|Y_{n-2}|^{1-n} \right|_{y_{n-2}=L_{n-2}}\right) \, dy_{n-3}\cdots dy_2.
\end{align*}
Recalling that all components of $\nabla h_K$ are nonnegative, we conclude that the second term in the first integral is nonnegative.  Substituting
$$
  \frac{\partial L_{n-2}}{\partial y_{n-1}}= \frac{(1+y_2^2+\dots +y_{n-3}^2)^{1/2}}{(1-2y_{n-1}^2)^{3/2}}= \frac{|Y_{n-3}|}{(1-2y_{n-1}^2)^{3/2}},
$$
we find that $dJ_K/dy_{n-1}$ is at least a positive constant multiple of
\begin{align*}
\MoveEqLeft{-D\int_{L_2}^{1} \int_{L_3}^{y_2}\cdots \int_{L_{n-3}}^{y_{n-4}}
    \frac{(1-2y_{n-1}^2)^\frac{n-4}{2}}{|Y_{n-3}|^{n-2}(1-y_{n-1}^2)^\frac{n-1}{2}}
 \left.h_K(Y_{n-2},z,0)\right|_{y_{n-2}=L_{n-2}}\, dy_{n-3}\cdots dy_2}\\
&+N\int_{L_2}^{1} \int_{L_3}^{y_2}\cdots \int_{L_{n-3}}^{y_{n-4}}
 \frac{(1-2y_{n-1}^2)^\frac{n-4}{2}}{|Y_{n-3}|^{n-2}(1-y_{n-1}^2)^\frac{n-1}{2}} \, dy_{n-3}\cdots dy_2.
\end{align*}
The common expressions depending only on $y_{n-1}$ can be factored and absorbed into the constant multiplying factor. Furthermore, the restriction $y_{n-2}\in [L_{n-2}, y_{n-3}]$ and the fact that $h_K(Y_{n-2},z,0)$ is increasing with respect to $y_{n-2}$ means that $h_K(Y_{n-2},z,0)$ has its minimum when $y_{n-2}=L_{n-2}$.  Letting
$$
R=R(y_2, \dots, y_{n-3}, y_{n-1}) = \int_{L_{n-2}}^{y_{n-3}}|Y_{n-2}|^{1-n} \,dy_{n-2}=\int_{L_{n-2}}^{y_{n-3}}\left(|Y_{n-3}|^2+t^2\right)^{(1-n)/2} \,dt
$$
and using the expressions for $N$ and $D$ from (\ref{JJJ}), we see that $dJ_K/dy_{n-1}$ is at least a positive constant multiple of
\begin{align*}
\MoveEqLeft{-\left(\int_{\Sigma_1}R \,dx\right)\left( \int_{\Sigma_1} |Y_{n-3}|^{2-n}
 \left.h_K(Y_{n-2}, z,0)\right|_{y_{n-2}=L_{n-2}}\, dx\right)}\\
& +\left(\int_{\Sigma_1}
R\left.h_K(Y_{n-2},z,0)\right|_{y_{n-2}=L_{n-2}}\,dx\right)
\left(\int_{\Sigma_1}|Y_{n-3}|^{2-n} \, dx\right),
\end{align*}
where $dx=dy_{n-3}\cdots dy_2$ and $\Sigma_1$ is the corresponding region of integration from the previous integrals.
Note that when $y_{n-2}=L_{n-2}$, (\ref{Lieq}) with $i=n-2$ implies that $z=L_{n-2}$, so $h_K(Y_{n-2},z,0)=h_K(Y_{n-3}, L_{n-2}, L_{n-2},0)$. To prove that $dJ_K/dy_{n-1}\ge 0$, it will therefore suffice to show that
\begin{equation}\label{Equivalent1}
\int_{\Sigma_1}h_K(Y_{n-3}, L_{n-2}, L_{n-2},0) \left(\frac{\int\limits_{L_{n-2}}^{y_{n-3}}\left(|Y_{n-3}|^2+t^2\right)
^\frac{1-n}{2} \,dt}{\int\limits_{\Sigma_1}\int\limits_{L_{n-2}}^{y_{n-3}}
\left(|Y_{n-3}|^2+t^2\right)^\frac{1-n}{2} \,dt\,dx}
  -\frac{|Y_{n-3}|^{2-n}}{\int\limits_{\Sigma_1}
  |Y_{n-3}|^{2-n}\,dx} \right)\,dx \geq 0.
\end{equation}
The substitution $t=|Y_{n-3}|u$ allows (\ref{Equivalent1}) to be rewritten in the form
\begin{equation}\label{n5case}
\int_{\Sigma_1}h_K(Y_{n-3}, L_{n-2}, L_{n-2},0) \left(\frac{|Y_{n-3}|^{2-n}\,U}{\int_{\Sigma_1}|Y_{n-3}|^{2-n}\,U\,dx}
  -\frac{|Y_{n-3}|^{2-n}}{\int_{\Sigma_1}|Y_{n-3}|^{2-n}\,dx}\right)\,dx \geq 0,
\end{equation}
where
\begin{equation}\label{Ueqn}
U=\int_{y_{n-1}/(1-2y_{n-1}^2)^{1/2}}
^{y_{n-3}/|Y_{n-3}|}
(1+u^2)^\frac{1-n}{2} \,du.
\end{equation}

Let $n=5$.  Then $\Sigma_1=[L_2,1]=[y_4/\sqrt{1-3y_4^2},1]$ by (\ref{L2eq}), and $dx=dy_2$.  In view of (\ref{Lieq}) with $i=3$ and the fact that $h_K$ increases with respect to its arguments, $h_K(Y_{2}, L_{3}, L_{3},0)$ is increasing with respect to $y_2$.  The part of the integrand in (\ref{n5case}) in parentheses, $S$ say, is clearly continuous with zero average over $\Sigma_1$.  The factor $|Y_{n-3}|^{2-n}=(1+y_2^2)^{-3/2}$ in $S$ is nonnegative and the remaining factor is increasing with respect to $y_2$, since only the upper limit $y_2/\sqrt{1+y_2^2}$ in the integral expression for $U$ depends on $y_2$.  Therefore there is some $c=c(y_4)\in [y_4/\sqrt{1-3y_4^2},1]$ such that $S\le 0$ for $y_2\in [y_4/\sqrt{1-3y_4^2},c]$ and $S\ge 0$ for $y_2\in [c,1]$.  Applying Lemma~\ref{Cheblem} with $f=S$ and $g=h_K$, we obtain (\ref{n5case}).  This completes the proof for $n=5$.

For the remainder of the proof we assume that $n\ge 6$.  The proof will be by induction on $n$.  Assume that the lemma holds for all dimensions less than $n$.  We shall make two further changes of variables, the first of which is to let $v_{n-3}=y_{n-3}/|Y_{n-3}|$. Then it is easy to check that
\begin{equation}\label{Jac2}
\frac{\partial (y_2,\dots, y_{n-4}, v_{n-3})}{\partial (y_2,\dots, y_{n-3})} = \frac{1+y_2^2+\dots+y_{n-4}^2}{(1+y_2^2+\dots+y_{n-3}^2)^{3/2}}
=\frac{|Y_{n-4}|^2}{|Y_{n-3}|^3}.
\end{equation}
We also have
\begin{equation}\label{yn3eq}
  y_{n-3}=\frac{v_{n-3}|Y_{n-4}|}{\sqrt{1-v_{n-3}^2}}
\end{equation}
and
\begin{equation}\label{Ln2eq}
 L_{n-2}=\frac{y_{n-1}|Y_{n-4}|}
 {\sqrt{1-2y_{n-1}^2}\ \sqrt{1-v_{n-3}^2}}.
\end{equation}
Setting
\begin{equation}\label{Weqn}
  T=T(v_{n-3},y_{n-1})=\frac{U}{\int_{\Sigma_1}|Y_{n-3}|^{2-n}\,U\,dx}
  -\frac{1}{\int_{\Sigma_1}|Y_{n-3}|^{2-n}\,dx}
\end{equation}
and noting that by (\ref{yn3eq}) and (\ref{Ln2eq}), we have
$$h_K=h_K\left(1, y_2, \dots, y_{n-4}, \frac{v_{n-3}|Y_{n-4}|}{\sqrt{1-v_{n-3}^2}}, \frac{y_{n-1}|Y_{n-4}|}
 {\sqrt{1-2y_{n-1}^2}\ \sqrt{1-v_{n-3}^2}}, \frac{y_{n-1}|Y_{n-4}|}
 {\sqrt{1-2y_{n-1}^2}\ \sqrt{1-v_{n-3}^2}},0\right),$$
we use (\ref{Jac2}) to rewrite (\ref{Equivalent1}) in the form
\begin{equation}\label{Equivalent2}
\int_{\Sigma_2}h_K|Y_{n-3}|^{2-n}T\frac{|Y_{n-3}|^3}
{|Y_{n-4}|^2}\,dx=
\int_{\Sigma_2}h_K\frac{(1-v_{n-3}^2)^\frac{n-5}{2}}{|Y_{n-4}|^{n-3}}T
\,dx\geq 0.
\end{equation}
Here $\Sigma_2$ is the new domain of integration obtained from $\Sigma$ by the last change of variable, given explicitly by
$$
\int_{\Sigma_2}\,dx= \int_{L_2}^{1}\int_{L_3}^{y_2}\cdots \int_{L_{n-4}}^{y_{n-5}}\int_{\frac{y_{n-1}}{\sqrt{1-2y_{n-1}^2}}} ^{\frac{y_{n-4}}{\sqrt{|Y_{n-4}|^2+y_{n-4}^2}}}dv_{n-3}\,dy_{n-4}\cdots dy_2,
$$
where we used (\ref{Lieq}) with $i=n-3$ to obtain the lower limit for integration with respect to $v_{n-3}$.  The sign of the integrand in (\ref{Equivalent2}) coincides with the sign of $T$.  Also, by (\ref{Ueqn}) and (\ref{Weqn}), $T$ is increasing with respect to $v_{n-3}$.  It follows that the sign of the integrand in (\ref{Equivalent2}) coincides with that of $v_{n-3}-m(y_{n-1})$, for a suitable function $m$ of $y_{n-1}$. Hence, since $h_K$ is also increasing with respect to $v_{n-3}$, the integral in (\ref{Equivalent2}) is
\begin{align}\label{xyz}
\MoveEqLeft{\int_{L_2}^{1} \int_{L_3}^{y_2}\cdots \int_{L_{n-4}}^{y_{n-5}}\left(-\int_{\frac{y_{n-1}}{\sqrt{1-2y_{n-1}^2}}} ^{m(y_{n-1})} h_K\frac{(1-v_{n-3}^2)^\frac{n-5}{2}}{|Y_{n-4}|^{n-3}}|T| \, dv_{n-3}\right.}\nonumber\\
& + \left.\int_{m(y_{n-1})} ^{\frac{y_{n-4}}{\sqrt{|Y_{n-4}|^2+y_{n-4}^2}}} h_K\frac{(1-v_{n-3}^2)^\frac{n-5}{2}}{|Y_{n-4}|^{n-3}}T \, dv_{n-3}\right)\, dy_{n-4}\cdots dy_2\nonumber\\
& \geq \int_{L_2}^{1} \int_{L_3}^{y_2}\cdots \int_{L_{n-4}}^{y_{n-5}}h_K(M)\left(\int_{\frac{y_{n-1}}{\sqrt{1-2y_{n-1}^2}}} ^{\frac{y_{n-4}}{\sqrt{|Y_{n-4}|^2+y_{n-4}^2}}}  \frac{(1-v_{n-3}^2)^\frac{n-5}{2}}{|Y_{n-4}|^{n-3}}T\, dv_{n-3}\right)\, dy_{n-4}\cdots dy_2,
\end{align}
where
$$M=\left(1, y_2, \dots, y_{n-4}, \frac{m|Y_{n-4}|}{\sqrt{1-m^2}}, \frac{y_{n-1}|Y_{n-4}|}
{\sqrt{1-2y_{n-1}^2}\ \sqrt{1-m^2}}, \frac{y_{n-1}|Y_{n-4}|}
{\sqrt{1-2y_{n-1}^2}\ \sqrt{1-m^2}},0\right).$$

Our aim is to show that the previous integral is nonnegative.  To this end, we introduce our second and final change of variables, by letting $v_i=y_i/|Y_{n-4}|$, for $i=2,\dots,n-4$. Then
$$
\frac{\partial v_i}{\partial y_j}=\begin{cases}
(|Y_{n-4}|^2-y_i^2)/|Y_{n-4}|^3,& {\text{if $i=j=2,\dots,n-4$,}}\\[1ex]
-y_iy_j/|Y_{n-4}|^3,& {\text{if $i, j=2,\dots, n-4$, $j\neq i$}},
\end{cases}
$$
and by manipulations similar to those set out for the initial change of variables (\ref{yieq}), we find that
$$
\frac{\partial (v_2,\dots,v_{n-4})}{\partial (y_2,\dots,y_{n-4})} = |Y_{n-4}|^{3-n}.
$$
It is easy to check that in (\ref{xyz}), the upper limit of integration with respect to $v_{n-3}$ becomes $v_{n-4}/\sqrt{1+v_{n-4}^2}$ in terms of the new variables.  For the limits of integration with respect to the new variables, we first obtain
$$\frac{y_{n-1}}{\sqrt{1-3y_{n-1}^2}}\leq v_{n-4}\leq \frac{1}{\sqrt{n-4}}.$$
Here the lower bound follows directly from $L_{n-4}\le y_{n-4}$ and (\ref{Lieq}) with $i=n-4$.  To obtain the upper bound, we first note that $y_2\le 1$, by (\ref{yieq}) and (\ref{omdef33}), and that this is equivalent to $2y_2^2+y_3^2+\cdots+y_{n-4}^2\le |Y_{n-4}|^2$.  Expressing this inequality in terms of the new variables, we see that the region of integration is contained in the ellipsoid
\begin{equation}\label{newelli}
2v_2^2+v_3^2+\dots +v_{n-4}^2\leq 1,
\end{equation}
from which the upper bound follows directly. Now, once $v_{n-4}, v_{n-5},\dots,v_{i+1}$ have been fixed, we find that
$$v_{i+1}\le v_{i}\le \left(\frac{1-v_{n-4}^2-\cdots-v_{i+1}^2}{i}\right)^{1/2},$$
for $i=2,\dots,n-3$.  Here the lower bound results from (\ref{omdef33}) and the changes of variable via (\ref{yieq}), while the upper bound is again a consequence of (\ref{newelli}) and the fact that to maximize $v_i$, one must take $v_2=v_3=\cdots=v_{i+1}=v_i$ to reach the boundary of the ellipsoid. Thus the integral in (\ref{xyz}) becomes
\begin{equation}\label{ZKVin}
\int_{\frac{y_{n-1}}{\sqrt{1-3y_{n-1}^2}}}^{\frac{1}{\sqrt{n-4}}}Z_KV \,dv_{n-4},
\end{equation}
where
\begin{equation}\label{ZKdef}
 Z_K=Z_K(v_{n-4},m,y_{n-1}) = \int_{v_{n-4}}^{\left(\frac{1-v_{n-4}^2}{n-5}\right)^{1/2}}\cdots \int_{v_{3}}^{\left(\frac{1-v_{n-4}^2-\cdots-v_3^2}{2}\right)^{1/2}} h_K(M)\,dv_2\cdots dv_{n-5},
\end{equation}
$$M=\frac{\left((1-v_2^2-\cdots-v_{n-4}^2)^{1/2}, v_2, \dots, v_{n-4}, \frac{m}{\sqrt{1-m^2}}, \frac{y_{n-1}}
{\sqrt{1-2y_{n-1}^2}\ \sqrt{1-m^2}}, \frac{y_{n-1}}
{\sqrt{1-2y_{n-1}^2}\ \sqrt{1-m^2}},0\right)}{(1-v_2^2-\cdots-v_{n-4}^2)^{1/2}},$$ and
$$
 V=V (v_{n-4},y_{n-1})=\int_{\frac{y_{n-1}}{\sqrt{1-2y_{n-1}^2}}} ^{\frac{v_{n-4}}{\sqrt{1+v_{n-4}^2}}}  (1-v_{n-3}^2)^\frac{n-5}{2} T\, dv_{n-3}.
$$

We claim that
\begin{equation}\label{PhiMean}
\int_{\frac{y_{n-1}}{\sqrt{1-3y_{n-1}^2}}}^{\frac{1}{\sqrt{n-4}}}Z_{C^n}V\,dv_{n-4}= 0,
\end{equation}
where $C^n$, as in Lemma~\ref{lemdiff}, satisfies $h_{C^n}(x)=x_1$ for all $x=(x_1,\dots,x_n)$ such that $0=x_n\le x_{n-1}\le \cdots\le x_1\le 1$.  Indeed, we have $h_{C^n}(Y_{n-3},L_{n-2},L_{n-2},0)=1$ in view of (\ref{Yeq}) with $i=n-3$.  From this, it is clear that the left-hand side of (\ref{n5case}), and hence the integral in (\ref{ZKVin}), vanishes when $K=C^n$.  This proves the claim.

We know that $T\le 0$ for $v_{n-3}\le m(y_{n-1})$ and $T\ge 0$ for $v_{n-3}\ge m(y_{n-1})$.  It follows that $V\le 0$ and is decreasing if $v_{n-4}/\sqrt{1+v_{n-4}^2}\le m(y_{n-1})$, and hence when
$$\frac{y_{n-1}}{\sqrt{1-3y_{n-1}^2}}\le v_{n-4}\le \frac{m(y_{n-1})}{\sqrt{1-m(y_{n-1})^2}}.$$
For larger values of $v_{n-4}$, $V$ is increasing with respect to $v_{n-4}$ and so must become positive in order to satisfy (\ref{PhiMean}).  Consequently, there exists a function $q=q(m,y_{n-1})$ such that $V\le 0$ if $v_{n-4}\le q$ and $V>0$ if $v_{n-4}\ge q$.  Assuming that $Z_K/Z_{C^n}$ is an increasing function of $v_{n-4}$, we could then write
\begin{align*}
\int_{\frac{y_{n-1}}{\sqrt{1-3y_{n-1}^2}}}^{\frac{1}{\sqrt{n-4}}}Z_KV\,dv_{n-4}
&= -\int_{\frac{y_{n-1}}{\sqrt{1-3y_{n-1}^2}}}^{q} Z_{C^n} \frac{Z_K}{Z_{C^n}}|V | \, dv_{n-4} + \int_{q}^{\frac{1}{\sqrt{{n-4}}}}Z_{C^n} \frac{Z_K}{Z_{C^n}}V\,dv_{n-4} \\
  & \ge \frac{Z_K(q, m, y_{n-1})}{Z_{C^n}(q, m, y_{n-1})}\int_{\frac{y_{n-1}}{\sqrt{1-3y_{n-1}^2}}}^{\frac{1}{\sqrt{n-4}}} Z_{C^n}(v_{n-4}, m, y_{n-1}) V \, dv_{n-4}=0,
  \end{align*}
thus completing the proof of the lemma.

It remains to show that $Z_K/Z_{C^n}$ is an increasing function of $v_{n-4}$, for $n\ge 6$. Let $L$ be the $(n-3)$-dimensional convex body with support function
\begin{align*}
\MoveEqLeft{
  h_L(x_1,\dots, x_{n-3})}\\
  &= h_K\left(x_1, \dots, x_{n-4}, \frac{m}{\sqrt{1-m^2}}, \frac{y_{n-1}}
{\sqrt{1-2y_{n-1}^2}\ \sqrt{1-m^2}}, \frac{y_{n-1}}
{\sqrt{1-2y_{n-1}^2}\ \sqrt{1-m^2}}, x_{n-3}\right).
\end{align*}
Since $h_L$ is defined by fixing three coordinates in $h_K$, it is invariant under exchanges of the other coordinates.  Therefore $L$ has the symmetries of the coordinate cube $Q^{n-3}$.

By (\ref{jj}), we have
\begin{align*}
\MoveEqLeft{J_L(x_{n-4})=\frac{\int_{S(x_{n-4})}h_L\left(1,\frac{x_2}{x_1},\dots, \frac{x_{n-4}}{x_1},0\right)\,dx_2\cdots dx_{n-5}}{\int_{S(x_{n-4})}\,dx_2\cdots dx_{n-5}}} \\
  =& \frac{\int_{S(x_{n-4})}\frac{1}{x_1}h_K\left(x_1,x_2, \dots, x_{n-4}, \frac{m}{\sqrt{1-m^2}},\frac{y_{n-1}}
{\sqrt{1-2y_{n-1}^2}\ \sqrt{1-m^2}}, \frac{y_{n-1}}
{\sqrt{1-2y_{n-1}^2}\ \sqrt{1-m^2}},0\right)\,dx_2\cdots dx_{n-5}}{\int_{S(x_{n-4})}\frac{1}{x_1} x_1\,dx_2\cdots dx_{n-5}}\\
 =&Z_K(x_{n-4}, m, y_{n-1})/Z_{C^n}(x_{n-4}, m, y_{n-1}),
\end{align*}
where the previous equality follows from (\ref{ZKdef}) on noting that the limits of integration there coincide with (\ref{inarray}) with $n$ replaced by $n-3$. Moreover, since $L$ has the symmetries of $Q^{n-3}$, there is a $t>0$ such that $tL$ satisfies the hypotheses of Lemma~\ref{leman} with $n$ replaced by $n-3$.  In view of the obvious facts that $J_{tK}=tJ_K$ and that since $n\ge 6$, we have $3\le n-3<n$, we can appeal to the inductive hypothesis to conclude that $J_L(x_{n-4})$ is an increasing function of $x_{n-4}$.  It follows that $Z_K(v_{n-4}, m, y_{n-1})/Z_{C^n}(v_{n-4}, m, y_{n-1})$ is an increasing function of $v_{n-4}$ and the lemma is proved.  \qed


\begin{thebibliography}{999}

\bibitem{BalB12}
P.~Balister and B.~Bollob\'{a}s, Projections, entropy and sumsets, {\em Combinatorica} {\bf 32} (2012), 125–-141.

\bibitem{Ben99}
A.~Ben-Israel, The change of variables formula using matrix volume, {\em SIAM J. Matrix Anal.} {\bf 21} (1999), 300--312.

\bibitem{BetM83}
U.~Betke and P.~McMullen, Estimating the sizes of convex bodies
from projections, {\em J.~London Math. Soc.} (2) {\bf 27} (1983), 525--538.

\bibitem{BCT06}
J.~Bennett, A.~Carbery, and T.~Tao, On the multilinear restriction and Kakeya conjectures, {\em Acta Math.} {\bf 196} (2006), 261–-302.

\bibitem{BurZ88}
Y.~D.~Burago and V.~A.~Zalgaller, {\em Geometric Inequalities},
Springer, New York, 1988.

\bibitem{CCG}
S.~Campi, A.~Colesanti, and P.~Gronchi,
Convex bodies with extremal volumes having prescribed brightness in finitely many directions, \emph{Geom. Dedicata} {\bf 57} (1995), 121--133.

\bibitem{CamG11}
S.~Campi and P.~Gronchi, Estimates of Loomis-Whitney type for intrinsic volumes, \emph{Adv. Appl. Math.} {\bf 47} (2011), 545--561.

\bibitem{ConB74}
D.~R.~Conant and W.~A.~Beyer, Generalized Pythagorean theorem, \emph{Amer. Math. Monthly} {\bf 81} (1974), 262--265.

\bibitem{Fir60}
W.~J.~Firey,  Pythagorean inequalities for convex bodies, {\em Math. Scand.}
{\bf 8} (1960), 168--170.

\bibitem{Gar06} R.~J.~Gardner, {\em Geometric
Tomography}, second edition, Cambridge University Press, New York, 2006.

\bibitem{Garling07}
D.~J.~H.~Garling, {\em Inequalities: a journey into linear analysis}, Cambridge University Press, Cambridge, 2007.

\bibitem{Gro08}
M.~Gromov, Entropy and isoperimetry for linear and non-linear group actions, {\em
Groups Geom. Dyn.} {\bf 2} (2008), 499–-593.

\bibitem{Gru07}
P.~M.~Gruber, {\em Convex and Discrete Geometry}, Springer, Berlin, 2007.

\bibitem{GMR10}
K.~Gyarmati, M.~Matolcsi, and I.~Z.~Ruzsa, A superadditivity and submultiplicativity property for cardinalities of sumsets, {\em Combinatorica} {\bf 30} (2010), 163–-174.

\bibitem{Had57}
H.~Hadwiger, {\em Vorlesungen \"{u}ber {I}nhalt, {O}berfl\"{a}che
und {I}soperimetrie}, Springer, Berlin, 1957.

\bibitem{Han78}
T.~S.~Han, Nonnegative entropy measures of multivariate symmetric correlations, {\em Information and Control} {\bf 36} (1978), 133–-156.

\bibitem{HLP} G.~H.~Hardy, J.~E.~Littlewood, and G.~P\'{o}lya,
{\em Inequalities}, Cambridge University Press, Cambridge, 1959.

\bibitem{HugS11}
D.~Hug and R.~Schneider,  Reverse inequalities for zonoids and their application, {\em Adv. Math.} {\bf 228} (2011), 2634–-2646.

\bibitem{LooW49}
L.~H.~Loomis and H.~Whitney,  An inequality related to the
isoperimetric inequality, {\em Bull. Amer. Math. Soc.} {\bf 55} (1949), 961--2.

\bibitem{LYZ1}
E.~Lutwak, D.~Yang, and G.~Zhang, A new ellipsoid associated with convex bodies, {\em Duke Math. J.} {\bf 104} (2000), 375--390.

\bibitem{LYZ2}
E.~Lutwak, D.~Yang, and G.~Zhang, Volume inequalities for isotropic measures, {\em Amer. J. Math.} {\bf 129} (2007), 1711--1723.


\bibitem{MSW09}
N.~Madras, D.~W.~Sumners, and S.~G.~Whittington, Almost unknotted embeddings of graphs in $\Z^3$ and higher dimensional analogues, {\em J. Knot Theory Ramifications} {\bf 18} (2009), 1031–-1048.

\bibitem{Mey88} M.~Meyer, A volume inequality concerning sections of convex
sets, {\it Bull. London Math. Soc.} {\bf 20} (1988), 151--155.

\bibitem{NPRR}
H.~Q.~Ngo, E.~Porat, C.~R\'{e}, and A.~Rudra, Worst-case optimal join algorithms, arXiv:1203.1952v1.

\bibitem{Pfe93}
W.~F.~Pfeffer, {\em The {R}iemann {A}pproach to {I}ntegration:
{L}ocal {G}eometric {T}heory}, Cambridge University Press, New York,
1993.

\bibitem{SchW12}
F.~E.~Schuster and M.~Weberndorfer, Volume inequalities for asymmetric Wulff shapes, {\em J. Differential Geom.} {\bf 92} (2012), 263--283.

\bibitem{Sch93}
R.~Schneider, {\em Convex Bodies: The Brunn-Minkowski Theory}, Cambridge University Press, Cambridge, 1993.

\bibitem{She64}
G.~C.~Shephard, Shadow systems of convex bodies, \emph{Israel J. Math.} {\bf 2} (1974), 229--236.

\end{thebibliography}
\end{document}